\theoremstyle{definition}
\theoremstyle{plain}
\newtheorem{conjecture}{Conjecture}
\newtheorem{lemma}{Lemma}
\newtheorem*{proposition*}{Proposition}
\newtheorem{proposition}{Proposition}
\newtheorem{theorem}{Theorem}
\theoremstyle{remark}
\newcommand{\inner}[2]{\langle #1,\,#2 \rangle}
\renewcommand*{\Im}{\mathop{\mathrm{Im}} \nolimits}
\providecommand{\ap}[1]{\ensuremath{^\text{#1}}}
\providecommand{\ped}[1]{\ensuremath{_\text{#1}}}
\providecommand{\Z}{\mathbb Z}
\providecommand{\Q}{\mathbb Q}
\providecommand{\R}{\mathbb R}
\providecommand{\C}{\mathbb C}
\renewcommand*{\sl}{\mathop{\mathfrak{sl}} \nolimits}
\newcommand*{\SL}{\mathop{\mathrm{SL}} \nolimits}
\newcommand*{\Nil}{\mathop{\mathrm{Nil}} \nolimits}
\newcommand*{\Sol}{\mathop{\mathrm{Sol}} \nolimits}
\DeclareMathOperator{\Tr}{Tr}
\DeclareMathOperator{\coker}{coker}
\DeclareMathOperator{\CS}{CS}
\titleformat{\section}[hang]
    {\large\bfseries}
    {\thesection}
    {1em}
    {}
    []
\titleformat{\subsection}[hang] 
    {\bfseries}
    {\thesubsection}
    {1em}
    {}
    []
\titleformat{\subsubsection}[hang] 
    {\bfseries}
    {\thesubsubsection}
    {1em}
    {}
    []
\titleformat{\paragraph}[runin]
    {\normalfont}
    {\theparagraph}
    {1em}
    {\itshape}
    []
\title{On Quantum Modularity for \\ Geometric 3-Manifolds}
\author[1]{Pavel Putrov}
\affil[1]{ICTP, Strada Costiera 11, Trieste 34151, Italy}
\author[2]{Ayush Singh}
\affil[2]{SISSA, Via Bonomea 265, Trieste 34136, Italy}
\date{}
\begin{document}

\maketitle

\begin{abstract}
\noindent
The quantum modularity conjecture, first introduced by Don Zagier, is a general statement about a relation between $\mathfrak{sl}_2$ quantum invariants of links and 3-manifolds at roots of unity related by a modular transformation. In this note we formulate a strong version of the conjecture for Witten--Reshetikhin--Turaev invariants of closed geometric, not necessarily hyperbolic, 3-manifolds. This version in particular involves a geometrically distinguished $SL(2,\mathbb{C})$ flat connection (a generalization of the standard hyperbolic flat connection to other Thurston geometries) and has a statement about the integrality of coefficients appearing in the modular transformation formula. We prove that the conjecture holds for Brieskorn homology spheres and some other examples. We also comment on how the conjecture relates to a formal realization of the $\mathfrak{sl}_2$ quantum invariant at a general root of unity as a path integral in analytically continued $SU(2)$ Chern--Simons theory with a rational level.

\end{abstract}

\tableofcontents

\section{Introduction and summary}
\label{sec:intro}

Thurston's geometrization conjecture \cite{thurston1982three}, also known as Perelman's theorem \cite{perelman2002entropyformularicciflow,perelman2003ricciflowsurgerythreemanifolds,perelman2003finiteextinctiontimesolutions}, provides an intimate relation between 3-manifold topology and Riemannian structures. It states that any compact oriented 3-manifold can be cut along 2-spheres and 2-tori so that each piece locally has one of the eight canonical geometries (listed in the first column of Table~\ref{tab:geometries}). 

An efficient tool to study the topology of 3-manifolds is provided by topological quantum field theories (TQFTs). A topological quantum field theory, in particular, provides diffeomorphism invariants of 3-manifolds that behave in a nice way under cutting and gluing. There are two main mathematical constructions of 3d TQFTs. The first one is by Reshetikhin and Turaev \cite{RT91,turaev1992modular}, which is inspired by the physics construction of Witten \cite{Witten:1988hf}. As an input, it takes a modular tensor category (that is, a braided fusion category with a non-degenerate $S$-matrix). One then defines invariants of closed 3-manifolds via their Dehn surgery representation. The invariant of closed 3-manifolds can then be extended to a 3d TQFT. The second construction is by Turaev and Viro \cite{turaev1992state}. As an input, it takes a spherical fusion category. One then defines invariants of 3-manifolds in terms of their triangulation. The two invariants turn out to be closely related: Turaev--Viro invariant for a given spherical function category is equal to Reshetikhin--Turaev invariant for the Drinfeld center of that category. In particular, if in the Turaev--Viro construction as an input one takes a modular tensor category (which is a special type of spherical fusion category), then the resulting 3-manifold invariant is the absolute value of the Witten--Reshetikhin--Turaev (WRT) invariant construction from the same category. Both of these constructions are of a combinatorial nature and, a priori, have nothing to do with geometric structures.  

Nevertheless, it is natural to ask if the eight Thurston geometric structures can be, in one way or another, recovered from those topological invariants. One may expect this especially when the input category for the WRT invariant is the modular tensor category of finite-dimensional representations of the quantum group $\bar{U}_\xi(\sl_2$) for $\xi$ being a primitive $r$-th root of unity. In the case when $\xi=e^{\frac{2\pi i}{r}}$ for an integer $r$, the corresponding TQFT can be understood as $SU(2)$ Chern--Simons gauge theory. Formally, its analytically continued version can be related to the 3d theory of gravity \cite{Witten:1988hc,Witten:2010cx}.

In the case of hyperbolic knots, that is, knots with complements admitting hyperbolic Thurston geometry, such a relation between quantum invariants and geometric structures is provided by Kashaev's \textit{volume conjecture} \cite{kashaev1997hyperbolic} reformulated in terms of the colored Jones polynomial by Murakami and Murakami \cite{murakami2001colored}. It states that the asymptotics of the absolute value of the colored Jones polynomial of a knot in a certain regime is given by the volume of the knot complement for the canonical hyperbolic metric. A slightly more refined version of this conjecture states that \cite{murakami2002kashaev}:
\begin{equation}
    \lim_{r\rightarrow \infty} \frac{\log J_r(K;e^\frac{2\pi i}{r})}{2\pi i r}=\CS[A_*]\mod 1.
\end{equation}
On the left-hand side, $J_r(K;\xi)$ is the $r$-th colored Jones polynomial of the knot $K\in S^3$ normalized to 1 on the unknot. On the right-hand side, $\CS[A_*]$ is the Chern--Simons functional of the so-called $SL(2,\C)$ geometric flat connection representing the homomorphism $\pi_1(S^3\setminus K)\rightarrow SL(2,\C)$ that can be defined in the following way. Since the knot has hyperbolic complement, we have $S^3\setminus K\cong \mathbb{H}^3/\Gamma$, where $\mathbb{H}^3$ is the 3-dimensional hyperbolic space and $\Gamma\cong \pi_1(S^3\setminus K)$ is a discrete subgroup of orientation-preserving isometries $\mathrm{Isom}_+(\mathbb{H}^3)\cong PSL(2,\C)$. A lift (which always exists) of the inclusion $\Gamma \hookrightarrow PSL(2,\C)$ to $SL(2,\C)$ provides the above homomorphism. The relation to the hyperbolic volume appears through the fact that 
\begin{equation}
    \Im \CS[A_*]=-\frac{\mathrm{Vol}(S^3\setminus K)}{4\pi^2}.   
\end{equation}

The volume conjecture for the colored Jones polynomial can be further refined to \textit{quantum modularity} \cite{zagier2010quantum,Garoufalidis:2021lcp}. It can be formulated as a statement about the expression of the $r\rightarrow\infty$ asymptotics of the $r$-th colored Jones polynomial evaluated at a general $r$-th primitive root of unity $\xi\coloneqq e^{\frac{2\pi is}{r}}$ through the $s$-th colored Jones polynomial evaluated at $\tilde{\xi} \coloneqq e^{-\frac{2\pi ir}{s}}$.

An analogue of the volume conjecture for closed hyperbolic 3-manifolds first appeared in the work of Chen and Yang \cite{Chen:2015wfa}, where it was formulated as the appearance of the hyperbolic volume in the leading order of the asymptotics of Witten--Reshetikhin--Turaev and Turaev--Viro invariants for $\xi=e^{\frac{4\pi i}{r}}$ with odd $r$. This conjecture was then refined to a quantum modularity statement for hyperbolic 3-manifolds by Wheeler \cite{Wheeler:2023cht} as a statement about the $r\rightarrow\infty$ asymptotics of the WRT invariant for general roots of unity $\xi=e^{\frac{2\pi i s}{r}}$. In this paper, we present a generalized version of that conjecture, which is formulated universally for geometric 3-manifolds, not just hyperbolic ones. In the simplest setting, it can be stated as follows (with some details elaborated in the main text).
\begin{conjecture}
\label{conj:zhs}
    Let $M\cong N/\pi_1(M)$ be a geometric integer homology sphere with the model geometry $N$ and $W_M(\xi)$ be its properly normalized WRT invariant for a primitive root of unity $\xi$ of odd order. Specifically, let $\xi=e^{\frac{2\pi is}{r}}$ for some $r\in 2\Z_{\geq 1}+1$ and $s\in 4\Z+1$. Then
    \begin{enumerate}
        \item the WRT invariant has the asymptotic expansion of the following form for as $r
            \to +\infty$ along integers coprime with $s$:
            \begin{equation}
                W_M(\xi) \simeq \sum_{A \in \pi_0(\mathrm{Hom}(\pi_1(M),
                SL(2, \C)))} e^{2\pi i \frac{r}{s} \CS[A]} \,P_A(\tilde \xi)\,
                I_A\left(\frac{s}{r}\right)
                \label{intro-conj-exp}
            \end{equation}
            where $\tilde\xi = e^{-\frac{2\pi i r}{s}}$, $P_A(\tilde \xi)\in\Z[\tilde{\xi}]$, and $I_A(s/r)\in (s/r)^{\delta_A/2}\C \llbracket s/r \rrbracket$ for some $\delta_A\in \Z$.
        \item for the geometric flat connection $A_*$ representing the map $\pi_1(M)\rightarrow SL(2,\C)$ obtained by the lift of the composition of the canonical inclusion $\pi_1(M)\hookrightarrow \mathrm{Isom}_+(N)$ and a certain map $\mathrm{Isom}_+(N)\rightarrow PSL(2,\C)$ we have
            \begin{equation}
                P_{A_*}(\xi) = \xi^\delta W_M(\xi) + C_N,
                \label{zhs-P-W}
            \end{equation}
            where $\delta\in \Z$ and 
            \begin{equation}
             C_N=\left\{
                \begin{array}{cc}
                     -1 & N=S^3, \\
                     0 & \text{otherwise.}
                \end{array}
             \right.
            \end{equation}

    \end{enumerate}
\end{conjecture}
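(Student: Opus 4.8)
We only aim at the cases named in the abstract; the key instance is $M=\Sigma(p_1,p_2,p_3)$ with pairwise coprime $p_i$, and the plan is to reduce everything to the (one--sided) modularity of Eichler integrals of weight-$\tfrac32$ unary theta functions, in the spirit of Lawrence--Zagier and Hikami, and then to translate the resulting data into the $SL(2,\C)$ character variety and Chern--Simons invariants of Seifert homology spheres.

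First I would start from the Lawrence--Rozansky/Hikami closed form, which expresses the normalized WRT invariant $W_M(\xi)$ at $\xi=e^{2\pi i s/r}$ as a finite $\Q$-linear combination --- with coefficients that are roots of unity produced by the surgery presentation and its quadratic Gauss sums --- of the values at $\tilde\xi$ of the Eichler integral $\widetilde\Psi^{(P)}_\psi$ of the weight-$\tfrac32$ theta function $\Theta^{(P)}_\psi(\tau)=\sum_{n\ge 1} n\,\psi(n)\,q^{n^2/(4P)}$, where $P=p_1p_2p_3$ and $\psi$ is the odd period function supported on the residues $\pm P\!\left(\tfrac1{p_1}\pm\tfrac1{p_2}\pm\tfrac1{p_3}\right)\bmod 2P$. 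This step is purely algebraic and valid for all admissible $(r,s)$. Next I would invoke the near-modularity of $\widetilde\Psi^{(P)}_\psi$: its values at $s/r$ and at the complementary cusp differ by a period integral $\int^{i\infty}\Theta^{(P)}_\psi$, whose asymptotic expansion in $s/r$ has coefficients the special $L$-values $L(-2k,\psi)$ and whose leading behavior fixes $\delta_A$. Splitting $\psi$ along the cusps (equivalently along the CRT components of $\Z/2P\Z$) produces the sum over $\pi_0(\mathrm{Hom}(\pi_1 M, SL(2,\C)))$ in~\eqref{intro-conj-exp}: the quadratic exponents $-n^2/(4P)\bmod 1$ that occur are exactly the Chern--Simons invariants of the finitely many flat $SL(2,\C)$ connections on $\Sigma(p_1,p_2,p_3)$ computed by Fintushel--Stern, Auckly and Kirk--Klassen, and the prefactors $P_A(\tilde\xi)$ are finite $\pm1$-weighted sums of roots of unity; that these lie in $\Z[\tilde\xi]$ follows from Gauss-sum reciprocity together with the coprimality of $s$ and $r$ (alternatively, from realizing $W_M$ as a radial limit of the $\Z\llbracket q\rrbracket$-valued $\widehat Z$-series). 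This establishes part~(1).

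For part~(2) I would identify $A_*$ geometry by geometry. For every Brieskorn homology sphere other than the Poincar\'e sphere the model geometry is $\widetilde{SL(2,\R)}$, and the composition $\pi_1(M)\hookrightarrow\mathrm{Isom}_+(\widetilde{SL(2,\R)})\to\mathrm{Isom}_+(\mathbb{H}^2)=PSL(2,\R)\subset PSL(2,\C)$ is the holonomy of the hyperbolic base orbifold $S^2(p_1,p_2,p_3)$, so $A_*$ is a fixed lift of the Fuchsian triangle-group representation --- the extremal irreducible connection, corresponding to the component of $\Theta^{(P)}_\psi$ localized at the cusp $0$. For $\Sigma(2,3,5)$ the geometry is $S^3$ and $A_*$ is the irreducible $SU(2)$ connection coming from the binary icosahedral group, which is again the extremal residue. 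In either case the internal structure of the Eichler integral forces the tail of $\widetilde\Psi^{(P)}_\psi$ attached to this distinguished cusp to equal, after Gauss-sum reciprocity, $\xi^{\delta}W_M(\xi)$ up to the additive boundary term coming from the $n=0$/trivial-connection contribution; tracking that term shows it equals $-1$ precisely when the surgery diagram contains an $S^3$ summand, i.e.\ when $N=S^3$, and $0$ otherwise, which is~\eqref{zhs-P-W}. The remaining examples ($S^3$ itself, and Seifert homology spheres $\Sigma(a_1,\dots,a_n)$ with $n>3$) are handled by the same bookkeeping, with $\Theta^{(P)}_\psi$ replaced by a finite Gauss sum (for $S^3$) or the analogous weight-$\tfrac32$ theta series on $\Z/2P\Z$ with $P=a_1\cdots a_n$.

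The main obstacle is the last step: proving~\eqref{zhs-P-W} \emph{on the nose}, rather than up to a nonzero scalar, requires carrying every normalization constant through the surgery formula and the reciprocity for the Eichler integral, and --- more essentially --- it requires pinning down \emph{which} residue in the Lawrence--Rozansky sum is the geometric one for non-hyperbolic geometries, that is, a dictionary between the holonomy of the model geometry (via the maps $\mathrm{Isom}_+(N)\to PSL(2,\C)$) and a specific cusp expansion that, as far as we know, has not been recorded. I expect this to be a finite but delicate computation of Chern--Simons values and cusp contributions rather than a conceptual difficulty; once it is in place, parts~(1) and~(2) follow for all the manifolds listed above.
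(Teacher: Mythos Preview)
Your outline follows essentially the same route as the paper---Lawrence--Rozansky surgery formula, the Hikami false theta functions $\tilde\Phi^{\vec a}_{\vec p}$, their quantum modular $S$-transformation, and the dictionary between the labels $\vec a$ and the nontrivial $SL(2,\C)$ flat connections on $\Sigma(p_1,p_2,p_3)$. The paper also singles out $\Sigma(2,3,5)$ as the unique spherical case and treats it separately.

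There are, however, two genuine gaps in your plan relative to what the paper actually does. First, your argument for $P_A(\tilde\xi)\in\Z[\tilde\xi]$ is not correct as stated. You write that each $P_A(\tilde\xi)$ is ``a finite $\pm1$-weighted sum of roots of unity'' and that integrality follows from Gauss reciprocity or from the $\hat Z$-series being in $\Z[[q]]$. Neither works: the radial limit of the Eichler integral at $-r/s$ is
\[
\tilde\Phi^{\vec a}_{\vec p}(-r/s)=\frac{1}{2}\sum_{l=0}^{2Ps}\phi^{\vec a}_{\vec p}(l)\,\tilde\xi^{\,l^2/4P}\Bigl(1-\frac{l}{Ps}\Bigr),
\]
so the coefficients are a priori in $\tfrac{1}{4Ps}\Z$, not $\pm1$; and $\hat Z$-integrality (or Habiro) controls only the single polynomial $P_{A_*}$ attached to the geometric connection via part (2), not the $P_A$ for the other $D_{\vec p}-1$ flat connections. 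The paper devotes an entire appendix to a direct proof that $\tfrac12\tilde\Phi^{\vec a}_{\vec p}(s/r)\in\xi^{-\CS[\vec a]}\Z[\xi]$, and the argument is not a one-line reciprocity: one must kill the $1/(4Pr)$ denominator by combining the antisymmetry of $\phi^{\vec a}_{\vec p}$, careful reindexing over $\epsilon\in\{\pm1\}^3$, and a divisibility statement for certain partial Gauss sums.

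Second, the identification of $A_*$ and of the constant $C_N$ is more specific than you indicate. The paper pins down the geometric connection not by an ``extremal cusp'' heuristic but by proving an explicit congruence: with $\phi$ the surgery-framing correction, one has $\phi/4-1/2\equiv -\chi^2/4e\pmod 1$, and $-\chi^2/4e$ is exactly $\CS[A_*]$ for $\widetilde{SL(2,\R)}$ (and for $S^3$, since $4e/\chi^2=|\Gamma|$). Comparing CS values then forces $A_*$ to have rotation numbers $(1,1,1)$. The $-1$ in the spherical case does not come from ``an $S^3$ summand in the surgery diagram'' but from the elementary fact that for $(2,3,5)$ alone the generating function $\prod_j(u^{P/p_j}-u^{-P/p_j})/(u^P-u^{-P})$ has a Laurent tail $u+u^{-1}$ before its $\phi^{(1,1,1)}$-expansion, producing the extra $\xi^{1/120}$ term in the WRT formula. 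Finally, the paper does \emph{not} claim the $n>3$ Seifert case is ``the same bookkeeping''; the theorem is stated and proved only for three exceptional fibers.
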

A few remarks are in order: 

\begin{itemize}
    
    \item Different versions of quantum modularity property of WRT invariant (as well as the so-called $\hat{Z}$-invariants---certain analytic continuations of WRT invariants from roots of unity $\xi$ to generic complex $q$ \cite{Gukov:2016gkn,Gukov:2017kmk}) for general, not necessarily hyperbolic, 3-manifolds have been known for a long time (see e.g. \cite{LZ99,Hikami05,Hikami05Brieskorn,Hikami06,Hikami06Spherical,Hikami11,Gukov:2016njj,Gukov:2016gkn,Gukov:2017kmk,Cheng:2018vpl,bringmann2020quantum,Cheng:2023row,Cheng:2024vou}). The main novelty here is the geometric interpretation of the special flat connection $A_*$ which recovers the WRT invariant for non-hyperbolic geometric 3-manifolds. The factors $P_A(\tilde{\xi})$ in the asymptotic expansion can be interpreted as components of the vector-valued quantum modular form. Their conjectural integrality $P_A(\tilde{\xi})\in \Z[\tilde{\xi}]$ is consistent with the integrality of the WRT invariant for integer homology spheres \cite{murakami1994quantum,lawrence2021witten,habiro2002quantum} which is a consequence of the existence of the universal invariant valued in Habiro ring---a certain completion of $\Z[q,q^{-1}]$---the evaluation of which at $q=\xi$ gives the WRT invariant. We have numerical evidence which suggests that there exist elements of the Habiro ring that unify $P_A(\xi)$ for all roots of unity for other $A\in \pi_0(\mathrm{Hom}(\pi_1(M),SL(2,\C)))$ as well (cf. \cite{Garoufalidis:2021lcp,Wheeler:2023cht} for the hyperbolic case).

    \item When the geometry is hyperbolic (which, in a sense, is the generic case), we have $N=\mathbb{H}^3$, the map $\mathrm{Isom}_+(N)\rightarrow PSL(2,\C)$ is the canonical isomorphism and $A_*$ is the standard geometric flat connection for the hyperbolic $M$. In this case, $i\CS[A_*]$ has the largest real part among all $i\CS[A]$ and thus the corresponding term in (\ref{intro-conj-exp}) exponentially dominates all other terms. Disregarding the other terms recovers the quantum modularity conjecture for hyperbolic manifolds by Wheeler \cite{Wheeler:2023cht}. For other Thurston geometries, however, $i\CS[A]$ is real for all flat connections and therefore none of the terms are exponentially suppressed. 

    \item In (\ref{intro-conj-exp}), $\CS[A]$ denotes a lift of the Chern--Simons value to $\C$ from $\C/\Z$. A change of the lift is equivalent to the change of $P_A(\tilde\xi)$ by an overall integer power of $\tilde\xi$. In the spherical case, due to the non-trivial shift by $C_N$ the relation (\ref{zhs-P-W}) assumes the specific lift given by (\ref{CS-geom-spherical}).

    \item The expansion (\ref{intro-conj-exp}) is consistent with Witten's asymptotic expansion conjecture \cite{Witten:1988hf,Freed:1991wd,andersen2004asymptotic,andersen2025proofwittensasymptoticexpansion} which is of similar form, but is formulated for the special roots of unity $\xi=e^{\frac{2\pi i}{r}}$ and have the sum over $\pi_0(\mathrm{Hom}(\pi_1(M),SU(2)))$. Namely, treating $SU(2)$ as a subgroup of $SL(2,\C)$, the Witten's conjecture is recovered assuming that (cf. \cite{Wheeler:2023cht})
\begin{multline}
     P_A(1)=0, \\
     \forall A\in \pi_0(\mathrm{Hom}(\pi_1(M),SL(2,\C)))\setminus \pi_0(\mathrm{Hom}(\pi_1(M),SU(2))),\; \Im\CS[A]\leq 0.
\end{multline}
Moreover, similarly to the case of the standard Witten's asymptotic expansion conjecture, (\ref{intro-conj-exp}), can be formally interpreted as the saddle point expansion of a certain ``contour path integral'' (in the sense of \cite{Witten:2010cx,Kontsevich}) in the infinite-dimensional space of all $SL(2,\C)$ connections on $M$, modulo the subgroup of gauge transformations $\{M\rightarrow SL(2,\C)\}$ with the degree being a multiple of $s$. The degree here can be defined as the coefficient of proportionality between the image of $[M]\in H_3(M)$ and $[SU(2)]\in H_3(SL(2,\C))$. This makes $e^{2\pi i\,\frac{r}{s}\CS}$ a well-defined function on this quotient space. This way $W_M(\xi)$ can be interpreted as the partition function of analytically continued $SU(2)$ Chern--Simons gauge theory for the rational level $r/s$. The quotient space can be equivalently understood as the $s$-fold cover of the standard space of connections modulo all gauge transformations.  The $s$ integer coefficients of the polynomials $P_A(\tilde{\xi})=\sum_{m=0}^{\lvert s \rvert -1} n_{A,m}^{(s)}\tilde{\xi}^{m}$ (defined for $\tilde{\xi}^s=1$) can be understood as coefficients of the decomposition of the contour into Lefschetz thimble contours corresponding to the $s$ lifts of the connected components of the $SL(2,\C)$ flat connections (under the assumption that a single thimble corresponds to a single copy of a connected component). Note that the TQFTs extending the WRT invariant for roots of unity others than $\xi=e^{\pm\frac{2\pi i}{r}}$ are known to be non-unitary. In the context of the ``contour path integral'' realization, the non-unitarity appears through the fact that although all the coefficients of the Chern--Simons action are real, the contour is not invariant under complex conjugation for $s\neq \pm 1$.
   
\item The asymptotic expansion statement (\ref{intro-conj-exp}) can be upgraded into an exact equality by using Borel resummation of the formal power series $I_A(s/r)$ (cf. \cite{garoufalidis2007chern,Gukov:2016njj,Garoufalidis:2020nut,andersen2022resurgence,Wheeler:2023cht,Gukov:2024vbr}). This statement is motivated by the ``contour path integral'' interpretation in the previous remark, because in the finite-dimensional case the Borel resummation is known to recover the integral over the Lefschetz thimble contour. Such a stronger version of the conjecture will not be the focus of this paper.

\item The assumption that $M$ is an integer homology sphere restricts the model geometry $N$ to three options: $\mathbb{H}^3$, $\widetilde{SL(2,\R)}$, or $S^3$. In the main text, we also present a more general (but technically much more involved) version for rational homology spheres, which allows other geometries. 

\item As in \cite{Wheeler:2023cht} for the hyperbolic case, an analogue of this conjecture could also be made for the so-called $\hat{Z}$-invariant of \cite{Gukov:2016gkn,Gukov:2017kmk}. This invariant is valued in $q$-series with integer coefficients. Namely, in the left-hand side of (\ref{intro-conj-exp}) $W(q)$ would be replaced with $\hat{Z}(q)$ evaluated at $q=e^{2\pi i\tau},\;\Im \tau >0$. The analogues of the factors $P_A(\xi)$'s in the right-hand side then would be power series in $\tilde{q} \coloneqq e^{-\frac{2\pi i}{\tau}}$ with integer coefficients. One of them, for the same special flat connection $A_*$ as before, would then be conjecturally equal to $\hat{Z}(\tilde{q})$. The formal series $I_A(s/r)$ would be replaced with a series in $\tau$ (the same as before, up to a simple normalization related factor). A weaker version of such a conjecture for $\hat{Z}$, formulated for possibly non-geometric integer homology spheres, but without a geometric interpretation of the special flat connection that recovers $\hat{Z}$, appeared in \cite{Gukov:2024vbr}. There it was also noticed that for some 3-manifolds the sum over $\pi_0(\mathrm{Hom}(\pi_1(M),SL(2, \C)))$ needs to be extended to a larger set, which includes also critical points of the Chern--Simons functional at infinity of the space of $SL(2,\C)$ connections, but with the finite value of the functional.

\end{itemize}

The rest of the paper is organized as follows. In Section~\ref{sec:review}, we present some basic facts about Thurston's geometrization, Seifert fibration, Chern--Simons functional, and quantum invariants of 3-manifolds, which will be relevant for the rest of the paper. We also introduce the notion of the geometric flat connection there. Section~\ref{sec:q-modularity} contains a more detailed formulation of the conjecture above, as well as its more general version for rational homology spheres. We prove that the conjecture holds for all Brieskorn homology spheres (Theorem~\ref{thm:bri}) and some other examples. Appendices contain technical details that are used in the main text.

\section{Geometrization and quantum invariants} \label{sec:review}

\subsection{Thurston's geometrization}

As was already mentioned in Section~\ref{sec:intro}, according to Thurston's geometrization, any 3-manifold can be decomposed into geometric pieces. Throughout the paper, we focus on closed oriented geometric 3-manifolds. For such manifolds the decomposition consists of a single component and thus $M\cong N/\Gamma$ where $N$ is one of the eight simply-connected model geometries listed in the first column of Table~\ref{tab:geometries} and $\Gamma \subset \mathrm{Isom}_+(N)$ is a discrete subgroup of the group of orientation-preserving isometries. As usual, $\mathbb{H}^n$, $S^n$, and $\R^n$ denote $n$-dimensional hyperbolic space, sphere, and Euclidean space, respectively, with the standard metric. The geometries $\widetilde{SL(2,\R)}$, $\Nil_3$, a $\Sol_3$ are Lie groups equipped with left-invariant metrics. The group $\widetilde{SL(2,\R)}$ is the universal cover of $SL(2,\R)$. Geometrically, it can be understood as an $\R$-fibration over $\mathbb{H}^2$ (topologically trivial, but not geometrically). The group $\Nil_3$ is the Heisenberg group, that is, the group of upper-triangular 3-by-3 matrices with 1's on the diagonal. Geometrically, it can be understood as an $\R$-fibration over $\R^2$. Finally, the group $\Sol_3$ can be realized as the semidirect product $\R\rtimes \R^2$ with $t\in \R$ acting on $\R^2$ as $t:(x,y)\mapsto (e^{-t}x,e^ty)$.

\begin{table}[h]
    \centering
    \begin{tabular}{c|cc} 
    \toprule
    $N$ & $\mathrm{Isom}_0(N)$ & $\longrightarrow PSL(2,\C)$ \\
    \midrule
    $\mathbb{H}^3$          & $PSL(2,\C)$ & $\mathrm{id}$ \\
    $S^3$                   & $SO(4)\cong P(SU(2)\times SU(2))$ & $\rightarrow PSU(2)\hookrightarrow PSL(2,\C)$ \\
    $\R^3$                  & $\R^3\rtimes SO(3)$ & $\rightarrow SO(3)\cong PSU(2)\hookrightarrow PSL(2,\C)$ \\
    $S^2\times \R$          & $SO(3) \times \R$ & $\rightarrow SO(3)\cong PSU(2)\hookrightarrow PSL(2,\C)$ \\
    $\mathbb{H}^2\times \R$ & $PSL(2,\R) \times \R$ &  $\rightarrow PSL(2,\R)\hookrightarrow PSL(2,\C)$ \\
    $\widetilde{SL(2,\R)}$  & $\R\rightarrow\mathrm{Isom}_0(N)\rightarrow PSL(2,\R)$ & $\hookrightarrow PSL(2,\C)$  \\
    $\Nil_3$                & $\R\rightarrow\mathrm{Isom}_0(N)\rightarrow \R^2\rtimes SO(2)$ & $\cong \C\rtimes U(1)\hookrightarrow PSL(2,\C)$  \\
    $\Sol_3$                & $\Sol_3$ & $\rightarrow \R \hookrightarrow PSL(2,\C)$   \\
    \bottomrule
    \end{tabular}
    \caption{The eight Thurston geometries, the connected components of the identity of their isometry groups, and natural maps to $PSL(2,\C)$.}
    \label{tab:geometries}
\end{table}

We will further assume that $\Gamma\subset \mathrm{Isom}_0(N)\subseteq \mathrm{Isom}_+(N)$ is inside the connected component of the identity. These groups are indicated in the second column Table~\ref{tab:geometries}. In the case of $\widetilde{SL(2,\R)}$ and $\Nil_3$ geometries, a short exact sequence into which $\mathrm{Isom}_0(N)$ fits is shown. In the list, the special orthogonal groups $SO(n)$ appear as the orientation-preserving isometry groups of spheres, $PSL(2,\R)$ appear as the orientation-preserving isometries of $\mathbb{H}^2$, and the abelian groups $\R^n$ appear as the groups of translations in Euclidean spaces. We refer to \cite{scott1983geometries} for the details.

\subsubsection{Geometric flat connection}
\label{sec:geom-flat}

The last column of Table~\ref{tab:geometries} shows a natural map $\mathrm{Isom}_0(N)\rightarrow PSL(2,\C)$ which is obtained by the composition of the natural projection and inclusion. The $S^3$ case is special, as one can choose to project on either of the two factors in the projective product. Due to this, in the case of the spherical geometry, we further assume that the subgroup $\Gamma\subset SO(4)$ is contained in the factor on which the projection is performed, in the sense that the composition of the inclusion and projection on the other factor would be trivial. This effectively restricts $M=S^3/\Gamma$ to be a link of ADE type singularity $\C^2/\Gamma$, up to a change of the orientation. Note that spherical geometric manifolds are also exceptional in the sense that for all other 7 cases the fundamental group $\pi_1(M)\cong \Gamma$ uniquely determines the homeomorphism class of the 3-manifold. Imposing this additional assumption restores the uniqueness in the spherical case.

For $N=\mathbb{H}^3$, $S^3$ and $\R^3$ we have $\mathrm{Isom}_0(N)=\mathrm{Isom}_+(N)$. For $N=
\mathbb{H}^2\times \R$, $\widetilde{SL(2,\R)}$, and $\Sol_3$, even though $\mathrm{Isom}_0(N)\subsetneq \mathrm{Isom}_+(N)$, the condition that $\Gamma\subset \mathrm{Isom}_0(N)$ can be also relaxed to $\Gamma\subset \mathrm{Isom}_+(N)$. In these three cases the full $\mathrm{Isom}_+(N)$ has natural projection to $O^+(2,1)\cong \Z/2\Z\ltimes PSL(2,\R)$ or $\Z/2\Z \ltimes \R$ which can still be naturally embedded in $PSL(2,\C)\cong SO^+(3,1)$. This is not the case for $N=S^2\times \R$ and $\Nil_3$ cases, because there is no canonical map from $O(3)$ and $O(2)\ltimes \R^2$ to $SO^+(3,1)$.

Composing the isomorphism $\pi_1(M)\cong \Gamma\subset \mathrm{Isom}_0(N)$ with the map $\mathrm{Isom}_0(N)\rightarrow PSL(2,\C)$ thus gives us a map $\pi_1(M)\rightarrow PSL(2,\C)$ uniquely defined up to conjugation. The obstruction of lifting a $\alpha:\pi_1(M)\rightarrow PSL(2,\C)$ map to a $\tilde{\alpha}:\pi_1(M)\SL(2,\C)$ one with respect to the central extension $\Z/2\Z\rightarrow SL(2,\C)\rightarrow PSL(2,\C)$ is the analog of the Stiefel--Whitney class $w_2(\alpha)\in H^2(M,\Z/2)$ corresponding to the homotopy class of the map $M\rightarrow B^2\Z/2\Z$ induced by the fibration sequence $B\Z/2\Z\rightarrow BSL(2,\C)\rightarrow BPSL(2,\C)\rightarrow B^2\Z/2\Z$. For the map $\alpha:\pi_1(M)\rightarrow PSL(2,\C)$ defined by the geometric structure as above, we have $w_2(\alpha)=w_2(TM)$. Since any oriented 3-manifold is spin we have $w_2(TM)=0$ and therefore a lift $\tilde{\alpha}:\pi_1(M)\rightarrow SL(2,\C)$ exists. The lifts naturally correspond to spin structures. We will restrict ourselves to mod-2 homology spheres, that is, 3-manifolds with $H^i(M;\Z/2\Z)=0,\;i=1,2$, so that the lift is unique. This map will represent a particular connected component in the space $\mathrm{Hom}(\pi_1(M),SL(2,\C))/SL(2,\C)$ of $SL(2,\C)$ flat connections on $M,$ which will play a special role in the quantum modularity. We will refer to this flat connection as \textit{geometric}.

\subsubsection{Seifert manifolds}

\label{sec:seifert}

A Seifert manifold with $m$ exceptional fibers is a closed 3-manifold built out
of the following set of data: an $S^1$ fibration over a surface $S$ with Euler
number $b$, and $m$ pairs of coprime integers $(p_1, q_1)$, \ldots,
$(p_m, q_m)$.
The pairs of integers describe the $m$ exceptional fibers of the fibration. The
boundary of a tubular neighborhood of each fiber is a 2-torus with basis
1-cycles in homology given by the boundary of the disk, and a fiber at the
boundary. The tubular neighborhood of the $j$-th exceptional fiber is a solid
torus whose $(p_j, q_j)$ boundary cycle is contractible. Notice that the pair
$(p_j, q_j)$ is determined only up to an overall sign, so we can always choose
$p_j > 0$ and denote the Seifert manifold as manifold $S(b;\, p_1/q_1,\ldots,
p_m/q_m)$.
It is a fact that Seifert manifolds $S(b;\, p_1/q_1, \ldots, p_m/q_m)$ and
$S'(b';\, p'_1/q'_1, \ldots, p'_m/q'_m)$ are orientation preserving
homeomorphic if and only if $S = S'$, $e = e'$, up to reordering $p_j = p'_j$
and $q_j = q'_j \bmod p_j$ for every $j$ \cite[\S~3]{scott1983geometries}. In
light of this result, Seifert invariants are sometimes normalized so that $b = 0$.

Two important invariants of a Seifert fibration are its Euler number
\begin{equation}
    e = -b + \sum_{j = 1}^m \frac{q_j}{p_j}\;\in \Q,
\end{equation}
and the orbifold Euler characteristic of the base $M/S^1$,
\begin{equation}
    \chi = \chi(S) - \sum_{j = 1}^m \left(1 - \frac{1}{p_j}\right) \;\in \Q.
\end{equation}
For six of the eight Thurston geometries, all closed manifolds are Seifert
fibrations. Conversely, given a Seifert manifold, which model geometry it
admits is completely determined by the two invariants $e$ and $\chi$
according to the following table \cite[Table~4.1]{scott1983geometries}.

\begin{table}[h]
    \centering
    \begin{tabular}{c|ccc}
        \toprule
                   & $\chi > 0$      & $\chi = 0$ & $\chi < 0$            \\
        \midrule
        $e = 0$    & $S^2 \times \R$ & $\R^3$      & $\mathbb{H}^2 \times \R$       \\
        $e \neq 0$ & $S^3$           & $\Nil_3$        & $\widetilde{SL(2,\R)}$  \\
        \bottomrule
    \end{tabular}
\end{table}
In this paper, we will only consider Seifert fibrations over oriented bases. This, in particular, implies that the condition $\Gamma\subset \mathrm{Isom}_0(N)$ holds.

\begin{figure}[h]
     \centering
     \begin{tikzpicture}[scale=0.4]
         \begin{knot}[end tolerance=1pt]
         \strand[thick] (-6, 0)
           .. controls ++(90:1.8) and ++(0:-1) .. (0,2)
           .. controls ++(0:1) and ++(90:1.8) ..
           (6,0)
           .. controls ++(90:-1.8) and ++(0:1) .. (0,-2) node[below=3] {$b$}
           .. controls ++(0:-1) and ++(90:-1.8) .. (-6, 0);

         \strand[thick] (-7.5, 0)
           .. controls ++(90:-1.2) and ++(0:-1) .. (-6,-2.5)
           .. controls ++(0:1) and ++(270:1.2) .. (-4.5,0)
           .. controls ++(270:-1.2) and ++(0:1) .. (-6,2.5) node[above
left] {${p_1}/{q_1}$}
           .. controls ++(180:1) and ++(90:1.2) .. (-7.5,0)
         ;

         \strand[thick] (-7.5+4, 0+2)
           .. controls ++(90:-1.2) and ++(0:-1) .. (-6+4,-2.5+2)
           .. controls ++(0:1) and ++(270:1.2) .. (-4.5+4,0+2)
           .. controls ++(270:-1.2) and ++(0:1) .. (-6+4,2.5+2)
node[above] {${p_2}/{q_2}$}
           .. controls ++(180:1) and ++(90:1.2) .. (-7.5+4,0+2)
         ;

         \strand[thick] (-7.5+12, 0)
           .. controls ++(90:-1.2) and ++(0:-1) .. (-6+12,-2.5)
           .. controls ++(0:1) and ++(270:1.2) .. (-4.5+12,0)
           .. controls ++(270:-1.2) and ++(0:1) .. (-6+12,2.5)
node[above right] {${p_m}/{q_m}$}
           .. controls ++(180:1) and ++(90:1.2) .. (-7.5+12,0)
         ;

         \draw[thick, dotted, dash pattern= on 1.5pt off 5.7pt]
(0.7,3.5) arc (90:72:11);

         \flipcrossings{1,4,5}
         \end{knot}
     \end{tikzpicture}
     \caption{Surgery presentation of the manifold $S^2(b;\,p_1/q_1,
     \ldots,p_m/q_m)$.} 
     \label{fig:seifert-link}
\end{figure}
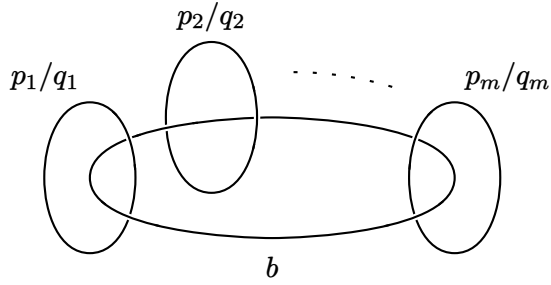

When the Seifert manifold is fibered over the $S^2$, it has an alternate
description as Dehn surgery on the link in Figure~\ref{fig:seifert-link} in
$S^3$. Furthermore, the first integral homology group of $M = S^2(b;\,
p_1/q_1,\ldots, p_m/q_m)$ is isomorphic to the cokernel of the matrix
\begin{equation}
    B = 
    \begin{bmatrix}
        b       & 1      & \cdots & 1       & 1      \\
        q_1     & p_1    & \cdots & 0       & 0      \\
        \vdots  & \vdots & \ddots & \vdots  & \vdots \\
        q_{m-1} & 0      & \cdots & p_{m-1} & 0      \\
        q_m     & 0      & \cdots & 0       & p_m
    \end{bmatrix},
\end{equation}
i.e., $H_1(M, \Z) \cong \Z^{m + 1} / B \Z^{m + 1}$
\cite[\S~1.1.4]{saveliev2002invariants}. Since $\det B = e p_1 p_2 \cdots p_m$,
when $M$ is a rational homology sphere we have $\lvert H_1(M, \Z) \rvert =
\lvert e p_1 p_2 \cdots p_m \rvert$. In particular, when $M$ is an integer
homology sphere, 
\begin{equation}
    p_1 p_2 \cdots p_m \left(-b + \sum_{j = 1}^m \frac{q_j}{p_j}\right) = \pm
    1
\end{equation}
which implies that (\textit{i}) $p_j$'s are mutually coprime (\textit{ii}) $q_j$'s are
uniquely determined modulo $p_j$ for every $j$, and therefore (\textit{iii}) $b$ is also
determined. Conversely, given $m$ mutually coprime integers $p_1$, \ldots,
$p_m$ such that $p_j \geq 2$, there is a unique Seifert integer homology
sphere with $m$ exceptional fibers, up to orientation reversal
\cite[\S~1.1.4]{saveliev2002invariants}.  We shall pick an orientation such that
$e > 0$ and denote this manifold as $\Sigma(p_1, \ldots, p_m)$.

We end this section by noting that with the exception of Poincar\'e homology
sphere, $\Sigma(2, 3, 5)$, for which $\chi > 0$, all other Seifert integer
homology spheres $\Sigma(p_1, \ldots, p_m)$ have $\chi < 0$ and therefore have
geometry modeled on $\widetilde{SL(2, \R)}$.

\subsection{Chern--Simons functional and flat connections}

An $SL(2,\C)$ principal bundle over a 3-manifold $M$ is always trivial. Choosing a global trivialization, we can consider the connection 1-form $A\in \Omega^1(M)\otimes \sl_2(\C)$. One then defines the Chern--Simons functional as follows:
\begin{equation}
    \CS[A]=\frac{1}{8\pi^2}\int_M\mathrm{Tr}\,\left(AdA+\frac{2}{3}A^3\right)\in \C.
\end{equation}
Under a change of the trivialization $g:M\rightarrow SL(2,\C)$ (i.e. a gauge transformation) we have $A'=gAg^{-1}+gdg^{-1}$ and the functional changes by an integer
\begin{equation}
    \CS[A']=\CS[A]+\mathrm{deg}\,g
    \label{CS-gauge-transformation}
\end{equation}
where $\mathrm{deg}\,g$ is defined as the coefficient of proportionality between $g_*[M]$ and the chosen generator of $H_3(SL(2,\C))\cong \Z$. Thus we can consider instead
\begin{equation}
    \CS[A]=\frac{1}{8\pi^2}\int_M\mathrm{Tr}\,\left(AdA+\frac{2}{3}A^3\right) \mod 1 \in \C/\Z
\end{equation}
which is independent of the choice of the trivialization. The critical points of the functional are connection 1-forms satisfying the flatness condition $F_A\equiv dA+A^2=0$. Modulo gauge transformations they form the moduli space of flat connections
\begin{equation}
    \mathcal{M}_\text{flat}(M,SL(2,\C))=\mathrm{Hom}(\pi_1 M,SL(2,\C))/SL(2,\C)
\end{equation}
where the action is by conjugation\footnote{In the context of asymptotic expansion of the quantum invariants, it is actually more natural to consider only semi-stable flat connections \cite{Witten:2010cx}, which would form a generally smaller space
\begin{equation}
    \mathcal{M}_\text{flat}^\text{s.s.}(M,SL(2,\C))=\mathrm{Hom}(\pi_1 M,SL(2,\C))\sslash SL(2,\C)
\end{equation}
given by the GIT quotient. On the other hand, one may also consider a larger moduli space $\mathcal{M}_\text{flat}^\text{s.s.+inf}(M,SL(2,\C))$ that also includes gauge equivalence classes of critical points of the Chern--Simons functional at infinity with finite critical values \cite{Gukov:2024vbr}. One can expect that the most general quantum modularity conjecture should be formulated with such a modified moduli space. However, it will not be relevant for the class of 3-manifolds considered in this paper.
}. The Chern--Simons functional then can be reduced to a function on the finite set of connected components:
\begin{equation}
    \CS:\;\pi_0(\mathcal{M}_\text{flat}(M,SL(2,\C)))\longrightarrow \C/\Z.
\end{equation}
Depending on the context, we will also use $\CS$ to denote a lift of the value to $\C$. Such a lift in particular can be provided by a specific choice of the representative connection 1-form.

The moduli space contains a distinguished subspace of abelian flat connections
\begin{multline}
    \mathcal{M}_\text{flat}(M,SL(2,\C)) \supset \mathcal{M}_\text{flat}^{\text{ab}}(M,SL(2,\C))\\
    =\mathrm{Hom}(\pi_1 M,\C^*)/\{\pm 1\}\cong H^1(M,\C/\Z)/\{\pm 1\}
\end{multline}
corresponding to the maps $\pi_1M\rightarrow SL(2,\C)$ that can be conjugated into a maximal torus subgroup $\C^*\subset SL(2,\C)$. The $\{\pm 1\}$ is the Weyl group of $\sl_2$, the generator of which acts as the inversion automorphism of $\C^*\cong \C/\Z$. We have a canonical identification 
\begin{equation}
    \pi_0(\mathcal{M}_\text{flat}^{\text{ab}}(M,SL(2,\C)))=\mathrm{Hom}(\mathrm{Tor}\, H_1( M),\Q/\Z)/\{\pm 1\}\cong \mathrm{Tor}\,H_1(M)/\{\pm 1\}
\end{equation}
where the last bijection is provided by the linking pairing
\begin{equation}
    \mathrm{lk}:\;\mathrm{Tor}\,H_1(M)\otimes_\Z \mathrm{Tor}\,H_1(M)\longrightarrow \Q/\Z.
\end{equation}
The value of the Chern--Simons functional restricted to this subset of connected components of the moduli space is then given  by
\begin{equation}
    \begin{array}{rrcl}
        \CS: & \pi_0(\mathcal{M}_\text{flat}^{\text{ab}}(M,SL(2,\C)))\cong \mathrm{Tor}\,H_1(M)/\{\pm 1\} & \longrightarrow  & \C/\Z, \\
         & a & \longmapsto & \mathrm{lk}(a,a).
    \end{array}
\end{equation}
It will be also useful to consider the subspace of non-abelian flat connections:
\begin{equation}
    \mathcal M^\text{non-ab}_\text{flat}(M,SL(2, \C))) \coloneqq \mathcal M_\text{flat}(M,SL(2, \C)))\setminus \mathcal M^\text{ab}_\text{flat}(M,SL(2, \C))).
\end{equation}

Consider now the case of the geometric $SL(2,\C)$ flat connection in the case of spherical geometry $N=S^3$, as defined in Section~\ref{sec:geom-flat}. The universal cover of $M=S^3/\Gamma$, is $S^3$, which we can identify with the $SU(2)$ group, with the identity element being the base-point. Under our assumptions $\Gamma\subset SU(2)\subset \mathrm{Isom}_0(S^3)=SO(4)$. It acts on $S^3\cong SU(2)$ by left or right multiplication, depending on the choice of the $SU(2)$ subgroup in $SO(4)$ containing $\Gamma$. One can choose a particular representative connection 1-form $A_*\in \Omega^1(M)\otimes \mathfrak{su}_2$ (with $\mathfrak{su}_2\subset \mathfrak{sl}_2(\C)$) such that its lift $\tilde{A}_*\in \Omega^1(S^3)\otimes \mathfrak{su}_2$ to the universal cover is the pure gauge: $\tilde{A}_*=gdg^{-1}$ for the identity map $g:S^3\cong SU(2)\to SU(2)$. Thus, from (\ref{CS-gauge-transformation}) we have $\CS[\tilde{A}_*]=\pm 1$, with the sign depending on the choice of orientation.  On the other hand, $\CS[\tilde{A}_*]=\lvert \Gamma \rvert \CS[A_*]$, because the integral of the CS 3-form over $S^3$ is just the sum of $\lvert \Gamma \rvert$ copies of the integral over a fundamental region, which is the same as $\CS[A_*]$. Thus we have
\begin{equation}
    \CS[A_*]=\pm\frac{1}{\lvert \Gamma \rvert}
    \label{CS-geom-spherical}
\end{equation}
for the geometric flat connection in the spherical case.

In the case of $N=\widetilde{SL(2,\R)}$ geometry, the value of the Chern--Simons functional for the geometric flat connection is known to be
\begin{equation}
    \CS[A_*]=\pm\frac{\chi^2}{4e}
    \label{CS-geom-sl2r}
\end{equation}
for the canonical representative 1-form $A_*\in \Omega^1(M)\otimes \sl_2(\R)$, where $\chi$ and $e$ are the invariants of the Seifert fibration \cite{brooks1984godbillon,brooks1984volumes,khoi2003cut}. In this case, possibly up to a sign, it is known to coincide with the so-called Seifert volume and Godbillon--Vey invariant. As in the spherical case, the sign depends on the choice of the orientation. 

Remarkably, $4e/\chi^2= \lvert \Gamma \rvert$ for the spherical manifolds with $\Gamma\subset SU(2)\subset SO(4)$ (which have ADE classification), which can be verified by a direct calculation. Therefore, one can use the expression (\ref{CS-geom-sl2r}) for the geometric flat connection in both cases.

\subsection{Review of WRT invariants at generic roots of unity}

The construction of Reshetikhin and Turaev \cite{RT91,turaev1992modular} provides a numerical invariant of closed oriented 3-manifolds via their Dehn surgery representation. The invariant is defined for any modular tensor category and can also be extended to a 3-2-1 TQFT. When the input is (the semi-simplification of the) category of finite-dimensional representations of the Hopf
algebra $\bar U_\xi \sl_2$ for a root of unity $\xi$, the invariant is known as the $\sl_2$ Witten--Reshetikhin--Turaev invariant. Moreover, when $\xi=e^{\frac{2\pi i}{r}}$, this gives a mathematical meaning to
the Chern--Simons path integral considered by Witten \cite{Witten:1988hf}. We will consider the standard, also known as $SU(2)$, version of the invariant. For mod-2 homology spheres, it contains essentially the same information as the $SO(3)$ invariant \cite{KM91}. For general 3-manifolds, however, one may expect that the quantum modularity is most naturally formulated as the relation between $SU(2)$ and $SO(3)$ invariants, as these groups are Langlands dual to each other.

The $\sl_2$ invariant can also be defined directly in terms of colored Jones polynomials. Namely let $L$ be an $N$-component framed link in $S^3$ with components
$\{L_1, \ldots, L_N\}$ colored by irreducible $\sl_2$ representations of dimensions $n_1,n_2,\ldots,n_N$. Then, using either skein relations together with the cabling operations, or the ribbon structure on the category of finite-dimensional representations of the quantum group $U_q(\sl_2)$ for generic $q$, one can define the colored Jones polynomial invariant of the link
\begin{equation}
    J_n(L;q)\in q^{a/4}\Z[q^{\pm 1}],\qquad a\in\Z,
\end{equation}
where $n \coloneqq (n_1,\ldots,n_m)\in \Z_{\geq 1}^m$. We consider normalization such that for the unknot $U$,
\begin{equation}
    J_n(U;q)=[n]_q \coloneqq \frac{q^{n/2}-q^{-n/2}}{q^{1/2}-q^{-1/2}}.
\end{equation}

Let $M = S^3(L)$ be the 3-manifold obtained by surgery on $L$. Fix $\xi^{1/4}$
to be a primitive $4r$-th root of unity. To avoid some technical subtleties
later on, we will assume that $r$ is odd and $s = 1 \bmod 4$. With the intermediate
quantity,
\begin{equation}
    F(L, \xi) \coloneqq \sum_{n \in \{1, \ldots, r - 1\}^m} J_n(L, \xi) \prod_{L_i
    \subset L} [n_i]_\xi,
\end{equation}
the WRT invariant can be written as
\begin{equation}
    \tau_M(\xi) \coloneqq \frac{F(L, \xi)}{F(U^{+1}, \xi)^{b_+} F(U^{-1},
    \xi)^{b_-}},
    \label{eqn:wrt-def}
\end{equation}
where $U^{\pm 1}$ is understood to be a $\pm 1$-framed unknot, and $b_{\pm}$
is the number of positive/negative eigenvalues of the linking matrix of $L$. As
defined above, the invariant is normalized to be $\tau_{S^3}(\xi) = 1$. Although in general the invariant depends on the choice of $\xi^{1/4}$, for integer homology spheres it actually depends only on $\xi$ itself \cite{habiro2002quantum}.

\section{Quantum modularity}
\label{sec:q-modularity}

To state the quantum modularity conjecture, it will be convenient to normalize
the WRT invariant in the following way; suppose that $M$ is a rational homology
sphere, let $H \coloneqq \lvert H_1(M, \Z) \rvert$ be the order of its first
homology group, and define
\begin{equation}
    W_M(\xi) \coloneqq \sqrt{H} \left(\frac{H}{s}\right) (\xi - 1)
    \tau_M(\xi).
\end{equation}
Since $s$ is an odd integer, the Jacobi symbol $\left(H/s\right)$ is
well-defined.

When $M$ is an integer homology sphere, $W_M(\xi) = (\xi - 1) \tau_M(\xi)$
and the statement takes the form of Conjecture~\ref{conj:zhs}. 
We propose the following generalization for rational homology spheres, which are also mod-2 homology spheres.
\begin{conjecture}
    \label{conj:qhs}
    Let $M\cong N/\pi_1(M)$ be a geometric rational homology sphere with no 2-torsion in homology and $\xi^{1/4}$ be a $4r$-th primitive root of unity. Specifically, take $\xi^{1/4}=e^{\frac{\pi is}{2r}}$ for some $r\in 2\Z_{\geq 1}+1$ and $s\in 4\Z+1$. Moreover, assume that $s$ is coprime with the order of the
    homology group $H = \lvert H_1(M, \Z) \rvert$. Then 
    \begin{enumerate}
        \item the WRT invariant admits the decomposition into a sum of terms labeled by abelian flat connections
            \begin{equation} W_M(\xi) = \sum_{a \in \pi_0(\mathcal{M}_\text{flat}^\text{ab}(M,
                SL(2,\C)))}  e^{2\pi i \frac{r}{s} \CS[a]}\,W^{a}_M (\xi);
                \label{conj-rhs-ab-decomp}
            \end{equation}
        such that each term has the
            following asymptotic expansion as $r \to \infty$ along integers coprime with $s$ and $H$:
            \begin{equation}
                e^{2\pi i \frac{r}{s} \CS[a]} W^{a}_M(\xi) \simeq \sum_{A \in \pi_0(\mathcal M_\text{flat}(M,
                SL(2, \C)))} e^{2\pi i \frac{r}{s} \CS[A]} P_A^{a} (\tilde \xi)
                 I^{a}_A(s/r),
            \end{equation}
            where $\tilde\xi = e^{-\frac{2\pi i r}{s}}$, $P_A^{a}(\tilde{\xi})$
            are polynomials in $\tilde\xi$ (with $s$-dependent coefficients),
            and $I^{a}_A(s/r)\in (s/r)^{\delta_A/2}\C \llbracket s/r
            \rrbracket$ for some $\delta_A\in \Z$. Moreover, $P_A\ap{triv}(\tilde \xi)\in\Z[\tilde{\xi}]$ and
               \begin{equation}
            P^{a}_{A}(\tilde\xi)=0
        \end{equation}
            for $A\in \pi_0(\mathcal{M}^\text{ab}_\text{flat}(M,SL(2,\C))) \setminus \{a\}$.
        \item
        for the geometric flat connection $A_*$
            \begin{equation}
                P_{A_*}\ap{triv}(\xi) = \xi^\delta \sum_{a \in \pi_0(\mathcal{M}\ped{flat}(M,
                U(1)))} W^{a}_M(\xi) + C_M,
                \label{conj-rhs-PW}
            \end{equation}
            where $\delta\in \Q$ and 
            \begin{equation}
             C_N=\left\{
                \begin{array}{cc}
                     -1 & N=S^3, \\
                     0 & \text{otherwise.}
                \end{array}
             \right.
            \end{equation}
    \end{enumerate}
\end{conjecture}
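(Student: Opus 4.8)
Since the conjecture cannot be established in full generality, the plan is to verify Conjectures~\ref{conj:zhs} and~\ref{conj:qhs} in the cases where the WRT invariant is under explicit analytic control: Seifert fibered rational homology spheres, and in particular all Brieskorn homology spheres $\Sigma(p_1,\ldots,p_m)$. The starting point is the Lawrence--Rozansky--Hikami presentation of $\tau_M(\xi)$ for a Seifert homology sphere as a finite Gauss-type sum, equivalently as a residue at $x=0$ of a meromorphic function assembled from the Seifert data $(b;p_1/q_1,\ldots,p_m/q_m)$ whose numerator is $\prod_{j}\big(e^{\pi i x/(2p_j)}-e^{-\pi i x/(2p_j)}\big)$ over $\big(e^{\pi i x/2}-e^{-\pi i x/2}\big)^{m-2}$, and to recognize the associated $q$-series as the $\hat Z$-invariant. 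First I would rewrite this, in the normalization $W_M(\xi)=\sqrt H\,(H/s)(\xi-1)\tau_M(\xi)$, so that $W_M(\xi)$ appears as a radial limit $q\to\xi$ of a linear combination --- with the \emph{integer} coefficients of $\hat Z$ --- of false/partial theta functions $\widetilde\Psi^{(\ell)}_P(q)$, $P=p_1\cdots p_m$, each being an Eichler integral of a weight-$3/2$ unary theta series. This presentation is also what makes the integrality claim $P^{(0)}_A(\tilde\xi)\in\Z[\tilde\xi]$ plausible, since the coefficients it generates are sums of roots of unity with integer multiplicities.

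The analytic core is the quantum modularity of these Eichler integrals at a general rational argument. Writing $\xi=e^{2\pi i s/r}$, one substitutes the near-modular transformation law for $\widetilde\Psi^{(\ell)}_P$: the difference between $\widetilde\Psi^{(\ell)}_P$ evaluated at $s/r$ and the natural transform involving the argument $-r/s$ is a smooth ``period'' integral on the positive real axis whose $r\to\infty$ asymptotic expansion is precisely a series of the form $(s/r)^{\delta_A/2}\,\C\llbracket s/r\rrbracket$, supplying the factors $I^{(a)}_A(s/r)$. The $S$-transformation of the weight-$3/2$ theta series with characteristic in $\tfrac1{2P}\Z/\Z$ produces a Gauss sum over residues modulo $2Ps$; resumming it groups terms by the value of a quadratic form, and these quadratic residues are exactly $\tfrac rs$ times the Chern--Simons invariants of the $SL(2,\C)$ flat connections of $M$ (the abelian ones giving $\mathrm{lk}(a,a)$, the non-abelian ones, for Brieskorn spheres, the values $\CS[A]\equiv-\,n^2/(4p_1\cdots p_m)$ over admissible $n$, known explicitly from the literature). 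This is the source of the phases $e^{2\pi i(r/s)\CS[A]}$ and of the abelian decomposition~\eqref{conj-rhs-ab-decomp}; the $\tilde\xi$-polynomials $P_A(\tilde\xi)$ are then read off as the inner Gauss sums, and one verifies term by term that $P^{(a)}_A=0$ unless $A$ lies over $a$.

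It remains to identify the distinguished term. For $\widetilde{SL(2,\R)}$ geometry I would match the extremal admissible tuple --- the one whose quadratic residue realizes $\CS[A_*]=\pm\chi^2/(4e)$ as in~\eqref{CS-geom-sl2r} --- with the block of the Gauss sum whose period integral decays slowest, and show that after the transformation this block reassembles into $\tilde\xi^{\delta}W_M(\xi)$, i.e.\ reproduces the original invariant; concretely this is a self-reciprocity of the Eichler integral under $S$ restricted to the leading characteristic. The constant $C_N$ is then tracked through the $(\xi-1)$ prefactor together with the contribution of the trivial connection. The Poincar\'e sphere $\Sigma(2,3,5)$, the unique spherical case, needs separate treatment: there all $i\CS[A]$ are real so no term dominates, $|\Gamma|=4e/\chi^2=120$ gives $\CS[A_*]=\pm1/120$ consistently with~\eqref{CS-geom-spherical}, and the shift $C_N=-1$ is the Lawrence--Zagier phenomenon that the radial limit of the Eichler integral returns $\tau_M$ up to an additive constant.

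The main obstacle is not the existence of the asymptotic expansion --- a fairly mechanical Euler--Maclaurin / stationary-phase computation once the Eichler-integral presentation is in hand --- but rather (i) proving the integrality $P^{(0)}_A(\tilde\xi)\in\Z[\tilde\xi]$ uniformly in $s$, which appears to require either a careful lattice-point count inside the Gauss sums or an appeal to the Habiro-ring integrality of $\hat Z$, and (ii) establishing the precise identity~\eqref{zhs-P-W}/\eqref{conj-rhs-PW}, with the correct power $\xi^{\delta}$ and the correct constant $C_N$ --- the delicate self-duality statement, which for rational (non-integer) homology spheres also carries the extra bookkeeping of the Jacobi symbol $(H/s)$ and of the abelian sum over $\pi_0(\mathcal M_\text{flat}(M,U(1)))$.
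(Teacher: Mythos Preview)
Your strategy is essentially the paper's: Lawrence--Rozansky surgery formula $\Rightarrow$ false theta / Eichler integral presentation $\Rightarrow$ quantum modular $S$-transformation to extract $P_A$, $I_A$, with the geometric connection pinned down by $\CS[A_*]=-\chi^2/4e$, the Poincar\'e sphere handled separately (the extra $u+u^{-1}$ term producing $C_N=-1$), and integrality of $P_A^{(0)}$ established by a direct lattice/Gauss-sum computation (the paper's Appendix~\ref{app:int-coeff}). One small correction: for $\widetilde{SL(2,\R)}$ geometry all $\CS[A]$ are real, so no block ``decays slowest''---the identification of $A_*$ with rotation numbers $(1,1,1)$ is made purely by matching the CS value and then recognizing that $\tilde\Phi^{(1,1,1)}_{\vec p}$ is precisely the false theta function already expressing $W_M$ (Proposition~\ref{prop:brieskorn-false-theta}), which is the ``self-reciprocity'' you allude to.
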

We remark that the decomposition (\ref{conj-rhs-ab-decomp}) with respect to the abelian flat connections is motivated by the analogous decomposition of the WRT invariant at $\xi=e^{\frac{2\pi i}{r}}$ in its relation to the $\hat{Z}$-invariants \cite{Gukov:2016gkn,Gukov:2016njj,Gukov:2017kmk,Murakami:2023oam}. The sum on the right-hand side of \ref{conj-rhs-PW} is the analogue of $\hat{Z}_0$. In Appendix \ref{app:Zhat} we provide a conjectural formula that relates $\hat{Z}$-invariants to the WRT invariant at a general root of unity. It generalizes the relation  Such a relation allows one to draw a direct parallel between the quantum modularity considered in this paper and \textit{holomorphic} quantum modularity of $\hat{Z}$-invariants.

\subsection{Proof of conjecture for Brieskorn homology spheres}

In this section, we will prove the quantum modularity conjecture for Brieskorn
homology spheres, i.e., Seifert integer homology spheres with three exceptional
fibers. Such manifolds admit $\widetilde{SL(2,\R)}$ or $S^3$ geometry (Poincar\'e homology sphere). The proof we give here is similar in spirit to \cite{LZ99, Hikami05Brieskorn}
and proceeds by identifying WRT invariants as limits of Eichler integrals of
half-integral weight modular forms---also known as false theta functions---and using the
modular transformation properties of the Eichler integral to make a statement
about the asymptotics of the WRT invariant. We begin by recalling the surgery
formula for WRT invariants at generic roots of unity due to
\cite[\S~4.4]{LR99}, and modular transformation properties of a family of
weight-$3/2$ vector-valued modular forms from \cite{Hikami05Brieskorn}. We then
prove the (quantum) modular and integrality properties of the limits of their
Eichler integrals at arbitrary rational numbers, and show how these limits are
related to the WRT invariant at generic roots of unity. Finally, we use the
properties of the Eichler integral, and a correspondence between non-trivial
flat connections and components of the vector-valued modular form to prove the
quantum modularity conjecture.

\paragraph{WRT invariant of Seifert integer homology spheres}

Suppose that $M$ is a Seifert integer homology sphere $\Sigma(p_1, \ldots,
p_m)$ with $m$ exceptional fibers, where $p_j \geq 2$ are mutually coprime
integers. As stated Section~\ref{sec:seifert}, Seifert invariants $p_1/q_1$,
\ldots, $p_m/q_m$, and $b$ are determined from the requirement that the
manifold is an integer homology sphere. Furthermore, we shall also pick a
normalization in which $b = 0$ and the orientation such that $e > 0$. Let
\begin{equation}
    P \coloneqq \prod_{j = 1}^m p_j,
    \ 
    H \coloneqq P\sum_{j = 1}^m \frac{q_j}{p_j} = \lvert H_1(M, \Z)\rvert,
    \ \text{and}\ 
    \phi \coloneqq 3\sigma(H) + \sum_{j = 1}^m \left(12s(q_j,
    p_j) - \frac{q_j}{p_j}\right),
\end{equation}
where $s(\cdot, \cdot)$ is the Dedekind sum, then, with Seifert invariants normalized as above, the WRT invariant of $M$ at $\xi^{1/4} = e^{\pi i s/2r}$ has
the following form \cite{LR99}
\begin{equation}
    \xi^{\phi/4 - 1/2} (\xi - 1) \tau_M(\xi)
    = \left(\frac{Pr}{s}\right) \frac{e^{\pi i / 4}}{2\sqrt{2Pr}}
    \sum_{\substack{n \in \Z/2Pr\Z \\ r \nmid n}} \xi^{-Hn^2/4P} \frac{\prod_{j =
    1}^m (\xi^{n/2p_j} - n^{-n/2p_j})}{(\xi^{n/2} - \xi^{-n/2})^{m - 2}}.
    \label{eqn:wrt-seifert}
\end{equation}
We know from the integrality of the WRT invariant of integer homology spheres
that $\tau_M(\xi) \in \Z[\xi]$. The following proposition says that the
fractional part of the power of $\xi$ on the left-hand side of
(\ref{eqn:wrt-seifert}) is equal to the Chern--Simons action at the geometric
flat connection, $\CS[A_*] = -\chi^2 / 4e$.

\begin{proposition}
    \label{prop:phi-cs}
    Consider the Seifert homology sphere $\Sigma(p_1, p_2, \ldots,p_m)$, then
    with $\phi$ as defined above, we have $\phi/4 - 1/2 = -\chi^2/4e \bmod 1$.
\end{proposition}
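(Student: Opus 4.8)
The plan is to impose the integer-homology-sphere constraints to reduce the assertion to an elementary congruence modulo $4$ among rational numbers, and then to recognise that congruence as an instance of Dedekind's reciprocity law. Since $M=\Sigma(p_1,\dots,p_m)$ is an integer homology sphere we have $H=|H_1(M,\Z)|=1$, so the signature term contributes $3\sigma(H)=3$; the identity $e\,p_1\cdots p_m=1$ together with the orientation convention $e>0$ gives $e=1/P$ with $P\coloneqq\prod_j p_j$; the normalisation $b=0$ forces $\sum_j q_j/p_j=e=1/P$; and $\chi=(2-m)+\sum_j1/p_j$, so $\chi^2/e=P\chi^2=E^2/P$ with $E\coloneqq P\chi\in\Z$. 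Substituting these into $\phi=3\sigma(H)+\sum_j\bigl(12\,s(q_j,p_j)-q_j/p_j\bigr)$ and clearing the factor $1/4$, the relation $\phi/4-1/2\equiv-\chi^2/4e\pmod1$ becomes a congruence of the shape
\begin{equation*}
  12\sum_{j=1}^{m}s(q_j,p_j)\;\equiv\;\varepsilon\,\frac{E^{2}}{P}-1+\frac1P\pmod4 ,
\end{equation*}
to be read in $\Q$ (its two sides differing by an element of $4\Z$), where $\varepsilon=\pm1$ is the orientation-dependent sign appearing in~(\ref{CS-geom-sl2r}); note that the genuinely fractional parts on the two sides must already cancel, which signals that the content is a reciprocity identity rather than a finite computation.

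The second step is to evaluate $\sum_j s(q_j,p_j)$ in closed form. Writing $P_j\coloneqq P/p_j=\prod_{i\neq j}p_i$, reduction of $\sum_k q_kP_k=eP=1$ modulo $p_j$ gives $q_jP_j\equiv1\pmod{p_j}$; combined with the periodicity $s(q,p)=s(q\bmod p,\,p)$ and the inversion symmetry $s(q,p)=s(\bar q,p)$, valid whenever $q\bar q\equiv1\pmod p$, this yields $s(q_j,p_j)=s\bigl(\textstyle\prod_{i\neq j}p_i,\;p_j\bigr)$. The resulting sum $\sum_j s\bigl(\prod_{i\neq j}p_i,\,p_j\bigr)$ is exactly the combination governed by Dedekind reciprocity: applying the reciprocity law term by term and simplifying --- equivalently, inducting on $m$ and running the Euclidean algorithm inside each Dedekind sum, in the spirit of~\cite{LZ99,Hikami05Brieskorn} --- produces a closed rational expression in the $p_j$, after which the displayed congruence reduces to an elementary identity modulo $4$ that one checks directly. (The same closed form is, equivalently, the content of the signature formula for the Milnor fibre of the associated quasi-homogeneous surface singularity, which realises $-\chi^2/4e=\CS[A_*]$ as an Atiyah--Patodi--Singer-type defect; this makes transparent that the proposition is the familiar matching between a framing/signature anomaly and a Chern--Simons invariant.) The Poincar\'e sphere $\Sigma(2,3,5)$, the only spherical case, is one instance of the same computation, consistent with $-\chi^2/4e=-1/|\Gamma|=-1/120$ via~(\ref{CS-geom-spherical}).

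I expect the main obstacle to be bookkeeping rather than a single hard idea: the summands $q_j/p_j$, the term $1/P$, and $E^{2}/P$ are not individually integers, and one must show that their fractional parts combine with that of $12\sum_j s(q_j,p_j)$ into an honest integer that is moreover divisible by $4$ --- in other words, the cancellation of denominators is the heart of the matter, and it is precisely this cancellation that Dedekind reciprocity supplies. Two further points need attention. First, well-definedness: each $q_j$ is determined only modulo $p_j$ (with a compensating change of $b$), but neither $s(q_j,p_j)$ nor $-b+\sum_j q_j/p_j=e$ changes under such a shift, so $\phi/4-1/2\bmod1$ is unambiguous. Second, the orientation: with the convention $e>0$ the sign on the right-hand side of the proposition is the one recorded in~(\ref{CS-geom-sl2r}) (respectively~(\ref{CS-geom-spherical})), and it is this choice that makes the fractional part of the power of $\xi$ appearing in~(\ref{eqn:wrt-seifert}) coincide with the Chern--Simons value of the geometric flat connection.
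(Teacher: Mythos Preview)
Your approach via direct evaluation of the Dedekind sums through reciprocity is viable but takes a longer route than the paper, and you have not actually carried it out. The paper sidesteps the Dedekind-sum computation entirely by invoking the known relation between $\phi$ and the Casson invariant $\lambda$ from \cite[\S4]{LR99}:
\[
-24\lambda \;=\; \phi + \frac{P}{H}\left(m - 2 - \sum_{j} \frac{1}{p_j^2}\right).
\]
Since $\lambda \in \Z$ for an integer homology sphere and $H=1$, this immediately gives $\phi/4 \bmod 1$ as an explicit rational expression in the $p_j$ with no Dedekind sums remaining. The congruence $\phi/4 - 1/2 + \chi^2/4e \equiv 0 \pmod 1$ then reduces to showing that the integer
\[
P\left(\frac{m(m+1)}{2} - 1 - m\sum_j \frac{1}{p_j} + \sum_{j<k}\frac{1}{p_jp_k}\right)
\]
is odd, which the paper dispatches with a two-line parity argument splitting on whether $P$ is even or odd (at most one $p_j$ can be even since the $p_j$ are pairwise coprime).

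Your plan would in effect rederive the Casson-invariant formula: the closed form for $\sum_j s(P_j,p_j)$ that iterated reciprocity produces is exactly what underlies that identity, so the step you defer to ``applying the reciprocity law term by term and simplifying'' is real work, and after it you would still face the same final parity check. The paper's shortcut is cleaner and fully explicit; your route is more self-contained in principle but remains a sketch --- the phrases ``produces a closed rational expression'' and ``one checks directly'' stand in for the entire computation.
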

\begin{proof}
    We begin by recalling how the quantity $\phi$ is related to the
    Casson invariant $\lambda$ \cite[\S 4]{LR99} 
    \begin{equation}
        -24\lambda = \phi + \frac{P}{H} \left(m - 2 - \sum_{j = 1}^m
        \frac{1}{p_j^2} \right),
    \end{equation}
    which implies that
    \begin{equation}
        \frac{\phi}{4} = -\frac{P}{4} \left(m - 2 - \sum_{j = 1}^m \frac{1}{p_j^2}\right)
        \mod 1.
    \end{equation}
    A short computation with the definition of the orbifold Euler
    characteristic, and the fact that $e = 1/P$ yields
    \begin{equation}
        \frac{\phi}{4} - \frac{1}{2} + \frac{\chi^2}{4e} = \frac{P}{2}
        \left(\frac{m(m + 1)}{2} - 1 - m \sum_j \frac{1}{p_j} + \sum_{j < k}
        \frac{1}{p_j p_k} \right) - \frac{1}{2} \mod 1.
    \end{equation}
    To show that the right-hand side is an integer (and therefore zero modulo
    1), it suffices to show that the integer
    \begin{equation}
        P \left(\frac{m(m + 1)}{2} - 1 - m \sum_j \frac{1}{p_j} + \sum_{j < k}
        \frac{1}{p_j p_k} \right)
        \label{eqn:odd}
    \end{equation}
    is odd. Note that since $p_j$'s have to be mutually coprime because
    $\Sigma(p_1, p_2,\ldots, p_m)$ is an integer homology sphere, at most one
    of them can be even. With this observation, we have
    \begin{equation}
        P \sum_j \frac{1}{p_j} = 
        \begin{cases}
            1  & \text{if $P$ is even} \\
            m  & \text{if $P$ is odd}
        \end{cases} \mod 2,
    \end{equation}
    and
    \begin{equation}
        P \sum_{j < k} \frac{1}{p_j p_k} = 
        \begin{cases}
            m - 1 & \text{if $P$ is even} \\
            \frac{m(m - 1)}{2} & \text{if $P$ is odd}
        \end{cases} \mod 2,
    \end{equation}
    which implies that (\ref{eqn:odd}) is odd.
\end{proof}

\paragraph{Modular forms and false theta functions}

Following \cite{Hikami05, Hikami05Brieskorn, Hikami06}, we describe
bases of weight-$3/2$ vector-valued modular forms and their Eichler integrals
whose limit at rational numbers are weight-$1/2$ strong quantum modular forms.

We specialize to $m = 3$, the case relevant for Brieskorn homology spheres.
Given a triple, $\vec{p} = (p_1, p_2, p_3)$, let $P = p_1 p_2 p_3$, and for
every triple, $\vec{a} = (a_1, a_3, a_3)$, with integers $a_j$ such that $0
< a_j < p_j$, define the $2P$-periodic functions
\begin{equation}
    \phi_{\vec p}^{\vec a} (l) = 
    \begin{cases}
        - \epsilon_1 \epsilon_2 \epsilon_3 & \text{if}\ l = P\left(1 + \sum_{j =
        1}^3 \frac{\epsilon_j a_j}{p_j}\right) \bmod 2P, \\
        0 & \text{otherwise},
    \end{cases}
\end{equation}
where $\epsilon_j = \pm 1$. However, not all $\phi^{\vec a}_{\vec p}$'s are
independent. Indeed, for $j = 1, 2, 3$ define the involutions
\begin{equation}
    \sigma_1(\vec a) = (p_1 - a_1, a_2, a_3),\ 
    \sigma_2(\vec a) = (a_1, p_2 - a_2, a_3),\ \text{and}\ 
    \sigma_3(\vec a) = (a_1, a_2, p_3 - a_3),
\end{equation}
and note that for $j \neq k$ we have $\phi^{\sigma_j \circ \sigma_k(\vec
a)}_{\vec p}(l) = \phi^{\vec a}_{\vec p} (l)$, so that the number of such
independent periodic functions is
\begin{equation}
    D_{\vec p} = \frac{1}{4} (p_1 - 1) (p_2 - 1) (p_3 - 1).
\end{equation}
For later use, we also note the following properties of the $\phi_{\vec
p}^{\vec a}$'s,
\begin{equation}
    \phi_{\vec p}^{\vec a}(-l) = -\phi_{\vec p}^{\vec a}(l)
    \quad\text{and}\quad
    \sum_{l = 0}^{2P - 1} \phi_{\vec p}^{\vec a}(l) = 0.
\end{equation}
For each independent $\phi^{\vec a}_{\vec p}$, define theta functions on the
upper half plane
\begin{equation}
    \Phi^{\vec a}_{\vec p}(\tau) = \frac{1}{2} \sum_{l \in Z} l \phi^{\vec
    a}_{\vec p}(l) q^{l^2/4P}
    \quad\text{with $q = e^{2\pi i \tau}$},
\end{equation}
which are components of a $D_{\vec p}$-dimensional vector-valued modular form
of weight $3/2$. Under $T$ and $S$ modular transformations, these theta
functions behave as \cite[Proposition~2]{Hikami05Brieskorn}
\begin{equation}
    \Phi^{\vec a}_{\vec p}(\tau + 1) = T^{\vec a} (\vec p) \Phi^{\vec a}_{\vec
    p}(\tau)
    \quad\text{and}\quad
    \Phi^{\vec a}_{\vec p}(\tau) = \left(\frac{i}{\tau}\right)^{3/2} \sum_{\vec
    b} S^{\vec a}_{\vec b} (\vec p) \Phi^{\vec b}_{\vec p}(-1/\tau)
\end{equation}
respectively, where
\begin{equation}
    T^{\vec a} = \exp\left(\frac{P}{2}\left(1 + \sum_{j = 1}^3
    \frac{a_j}{p_j}\right)^2 \pi i \right)
\end{equation}
and
\begin{equation}
    S_{\vec b}^{\vec a} = - \frac{8}{\sqrt{2P}} (-1)^{P \left(1 +
    \sum_j\frac{a_j + b_j}{p_j}\right) + P \sum_{j \neq k} \frac{a_j b_k}{p_j
    p_k}} \prod_j \sin\left(P \frac{a_j b_j}{p_j^2} \pi\right).
\end{equation}
We also introduce the so-called false theta functions, which are Eichler integrals
of the theta functions defined above
\begin{equation}
    \tilde\Phi_{\vec p}^{\vec a}(\tau) = \sum_{l = 0}^\infty \phi_{\vec p}^{\vec
    a}(l) q^{l^2 / 4P},
\end{equation}
convergent in the upper half plane $\Im \tau > 0$, whose limits at rational
numbers $\tau \to s/r$ follow from a proposition of \cite{LZ99} (see also
\cite[Proposition~3]{Hikami05Brieskorn})
\begin{equation}
    \tilde\Phi_{\vec p}^{\vec a}(s/r) \coloneqq \lim_{t \to 0^+}
    \tilde\Phi_{\vec p}^{\vec a}(s/r + it) = \frac{1}{2} \sum_{l = 0}^{2Pr} \phi_{\vec
    p}^{\vec a}(l) \xi^{l^2/4P} \left(1 - \frac{l}{Pr}\right).
\end{equation}
From the formula above, it is straightforward to deduce the behavior of these
functions under a modular $T$-transformation, $\tilde\Phi_{\vec p}^{\vec
a}(\tau + 1) = T^{\vec a}(\vec p) \tilde\Phi_{\vec p}^{\vec a}(\tau)$, where
$T^{\vec a}(\vec p)$. And while the false theta functions do not have good
transformation behavior under the action of the full modular group, their limits at
cusps, $\tau \to r/s$, satisfy the following quantum modular property under an
$S$-transformation as $r \to \infty$ \cite[Proposition~8]{Hikami05Brieskorn}
\begin{equation}
    \tilde\Phi_{\vec p}^{\vec a}(s/r) + \sqrt{\frac{r}{is}} \sum_{\vec b}
    S_{\vec b}^{\vec a}(\vec p) \tilde\Phi_{\vec p}^{\vec b}(-r/s) \simeq \sum_{k =
    0}^\infty \frac{L(-2k, \phi^{\vec a}_{\vec p})}{k!} \left(\frac{\pi i s}{2
    P r}\right)^k, 
\end{equation}
where the sum over $\vec b$ includes only the $D_{\vec p}$ independent false
theta functions, and the coefficients of the power series on the right-hand side are
\begin{equation}
    L(-2k, \phi^{\vec a}_{\vec p}) = - \frac{(2P)^{2k}}{2k + 1} \sum_{l = 1}^{2P}
    \phi^{\vec a}_{\vec p}(l) B_{2k + 1}(l/2P),
\end{equation}
where $B_{2k + 1}$ denotes the $(2k + 1)$-th Bernoulli polynomial, are values of
the analytically continued $L$-function for the Dirichlet character $\phi^{\vec
a}_{\vec p}$.

\paragraph{False theta functions and flat connections}

Here, we review a characterization of the connected components of the moduli
space non-trivial $SL(2, \C)$ flat connections on Seifert integer homology
spheres, and in particular, Brieskorn homology spheres from
\cite[Proposition~5]{andersen2022resurgence} (see also
\cite[\S~4]{Hikami05Brieskorn}). Let $\Sigma(p_1, p_2, p_3)$ be a
Brieskorn homology sphere, and suppose (possibly after relabeling) that $p_2$
and $p_3$ are odd. Define $A_{\vec p} \subset \Z_{>0}^3$ as the set of
triples $(a_1, a_2, a_3)$ such that $1 \leq a_1 < p_1$ and $1 \leq a_j
\leq (p_j - 1)/2$ for $j = 2$ and $3$, then there is an isomorphism
\begin{equation}
    \pi_0 (\mathcal M\ped{flat}\ap{non-ab}(\Sigma(p_1, p_2, p_3), SL(2, \C)))
    \cong A_{\vec p}.
\end{equation}
Each element $\vec a \in A_{\vec p}$ is called the \emph{rotation number}
of the flat connection it represents, and the Chern--Simons value of the
corresponding flat connection is
\begin{equation}
    \CS[\vec a] = - \frac{P}{4}\left(1 + \sum_{j = 1}^3
    \frac{a_j}{p_j}\right)^2 \bmod 1.
\end{equation}

Note that the set $A_{\vec p}$ has cardinality 
\begin{equation}
    D_{\vec p} = \frac{1}{4}(p_1 - 1) (p_2 - 1) (p_3 - 1),
\end{equation}
and the triples that are its elements, $\vec a \in A_{\vec p}$, pick out the
$D_{\vec p}$ independent false theta functions $\tilde\Phi^{\vec a}_{\vec p}$.
With this correspondence, we can write the $T$-matrix of the modular
transformation as $T^{\vec a}(\vec p) = e^{-2\pi i \CS[\vec a]}$.
And the following result says that the limit of the false theta function
$\tilde\Phi^{\vec a}_{\vec p}(s/r)$ at a rational number is a polynomial in the
root of unity $\xi = e^{\frac{2\pi i s}{r}}$ with an overall fractional power that
equals $\CS[\vec a]$.
 
\newcommand{\intcoeffprop}{
    Let $\xi = e^{\frac{2\pi i s}{r}}$ be a root of unity of odd order, then
    \begin{equation}
        \frac{1}{2} \tilde \Phi^{\vec a}_{\vec p}(s/r) \in \xi^{-\CS[\vec a]}
        \Z[\xi].
    \end{equation}
}

\begin{proposition}
    \label{prop:int-coeff}
    \intcoeffprop
\end{proposition}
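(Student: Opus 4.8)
The plan is to argue directly from the closed-form evaluation of $\tilde\Phi^{\vec a}_{\vec p}(s/r)$ quoted just above, extracting the algebraic content in three stages. \emph{Stage 1 (the overall power).} I would first note that the factor $\xi^{-\CS[\vec a]}$ is already visible in the exponents of the finite sum: whenever $\phi^{\vec a}_{\vec p}(l)\neq 0$ we have $l\equiv P\bigl(1+\sum_j\epsilon_j a_j/p_j\bigr)\pmod{2P}$ for some $\epsilon_j\in\{\pm1\}$, and a short computation in the spirit of Proposition~\ref{prop:phi-cs} --- using that $P/p_j$ and $P/(p_jp_k)$ are integers because the $p_j$ are mutually coprime --- shows $l^2/4P\equiv-\tfrac{P}{4}\bigl(1+\sum_j\epsilon_j a_j/p_j\bigr)^2\equiv\CS[\vec a]\pmod1$, with the congruence independent of the sign vector $\vec\epsilon$. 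Hence $\xi^{l^2/4P}=\xi^{-\CS[\vec a]}\,\xi^{m_l}$ for some $m_l\in\Z$, so $\xi^{l^2/4P}\in\xi^{-\CS[\vec a]}\Z[\xi]$, and it suffices to prove $\tfrac14\sum_{l=0}^{2Pr}\phi^{\vec a}_{\vec p}(l)\,\xi^{l^2/4P}\bigl(1-l/Pr\bigr)\in\xi^{-\CS[\vec a]}\Z[\xi]$, equivalently (writing $\xi^{l^2/4P}=\xi^{-\CS[\vec a]}\zeta_l$ with $\zeta_l$ a power of $\xi$) that $\tfrac14\sum_l\phi^{\vec a}_{\vec p}(l)\zeta_l\bigl(1-l/Pr\bigr)\in\Z[\xi]$.

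\emph{Stage 2 (killing the constant term).} The function $l\mapsto\phi^{\vec a}_{\vec p}(l)\xi^{l^2/4P}$ has period $2Pr$ (since $(l+2Pr)^2/4P-l^2/4P=rl+Pr^2\in r\Z$) and is odd (because $\phi^{\vec a}_{\vec p}$ is odd and $l\mapsto l^2$ is even), so its sum over a full period vanishes, and the boundary term $l=2Pr$ contributes $\phi^{\vec a}_{\vec p}(0)=0$. Thus the ``$1$''-part of $(1-l/Pr)$ drops out and the proposition reduces to the single divisibility statement
\[
 \sum_{l=0}^{2Pr-1} l\,\phi^{\vec a}_{\vec p}(l)\,\xi^{l^2/4P}\ \in\ 4Pr\cdot\xi^{-\CS[\vec a]}\Z[\xi].
\]
(It is worth noting that the integrality of the WRT invariant itself, via the surgery formula \eqref{eqn:wrt-seifert}, controls only a fixed linear combination of the $\tilde\Phi^{\vec a}_{\vec p}(s/r)$ over $\vec a$, so it does not by itself yield the per-component statement and a direct argument is genuinely needed.)

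\emph{Stage 3 (the divisibility).} I would prove this by splitting $4Pr=r\cdot 4P$ and writing $l=l_0+2Pm$, with $l_0$ ranging over the (at most eight) residues in the support of $\phi^{\vec a}_{\vec p}$ mod $2P$ and $m\in\{0,\dots,r-1\}$. The inner sum over $m$ is a quadratic Gauss sum twisted by the linear weight $l_0+2Pm$; completing the square modulo $r$ and carefully tracking the discrepancy between an integer and its residue mod $r$ in that weight produces one explicit factor of $r$, while the ``bulk'' Gauss-sum term is independent of $l_0$ up to a phase even in $l_0$ and is therefore annihilated upon summing against the odd function $\phi^{\vec a}_{\vec p}(l_0)$. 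For the remaining factor $4P$ I would exploit the $\vec\epsilon\leftrightarrow-\vec\epsilon$ symmetry of the support together with the Chinese Remainder factorisation of the residual Gauss sums across the pairwise coprime $p_1,p_2,p_3$: pairing the residue attached to $\vec\epsilon$ with the one attached to $-\vec\epsilon$ (on which $\phi^{\vec a}_{\vec p}$ changes sign while the quadratic phase does not), the leading contribution of each residue cancels because $\sum_{\vec\epsilon}\epsilon_1\epsilon_2\epsilon_3=0=\sum_{\vec\epsilon}\epsilon_1\epsilon_2\epsilon_3\epsilon_j$, and a second application of the same pairing to the integer ``wrap-around'' correction terms, combined with the coprimality factorisation, supplies the extra powers needed to reach $4P$. (When $\gcd(P,r)>1$ the Gauss-sum step is run on the reduced modulus $r/\gcd(P,r)$, which restores the missing factors without changing the structure.)

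The step I expect to be the real obstacle is Stage~3, and within it the extraction of the \emph{full} factor $4P$ rather than merely $2P$, together with the control of the quadratic phases coming out of completing the square. This is the only place where the fine arithmetic of the Brieskorn data --- the mutual coprimality of the $p_j$, the precise sign pattern of $\phi^{\vec a}_{\vec p}$, and the explicit shape of $\CS[\vec a]$ --- genuinely enters, and it is where essentially all of the work lies; by contrast Stages~1 and~2 are bookkeeping.
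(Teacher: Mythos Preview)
Your Stages~1 and~2 are exactly what the paper does: the fractional power is constant (equal to $-\CS[\vec a]\bmod 1$, not $\CS[\vec a]$ as you wrote in the intermediate step, though your final conclusion $\xi^{l^2/4P}\in\xi^{-\CS[\vec a]}\Z[\xi]$ is correct), and oddness kills the ``$1$''-part, reducing to the divisibility of $\sum_{l} l\,\phi^{\vec a}_{\vec p}(l)\,\xi^{l^2/4P}$ by $4Pr$. The decomposition $l=l_0+2Pm$ with $l_0=l_\epsilon=P(1+\sum_j\epsilon_j a_j/p_j)$ is also the paper's move.

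Where your Stage~3 accounting goes off is in how the factor $4Pr$ is split. The paper does \emph{not} extract $r$ first and then $4P$ by CRT across the $p_j$. Instead, writing the weight as $l_\epsilon+2PL$, the factor $2P$ falls out of the second piece trivially, and the $l_\epsilon$-piece is shown to \emph{vanish identically}: one proves (paper's Lemmas~1 and~2) that $\sum_\epsilon\epsilon_1\epsilon_2\epsilon_3\sum_L\xi^{F_\epsilon(L)}=0$ and $\sum_\epsilon\epsilon_1\epsilon_2\epsilon_3\,\epsilon_j\sum_L\xi^{F_\epsilon(L)}=0$, using that $F_{-\epsilon}(L)=F_\epsilon(-L-1)$ and that the other sign-flips act on $F_\epsilon$ by shifts of $L$ modulo $r'$, so in fact $\sum_{L\in\Z/r'\Z}\xi'^{F'_\epsilon(L)}$ is the \emph{same} for every $\epsilon$ and your sign identities $\sum_\epsilon\epsilon_1\epsilon_2\epsilon_3=0=\sum_\epsilon\epsilon_1\epsilon_2\epsilon_3\epsilon_j$ then finish. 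You have this idea, but it gives zero, not a factor of $4P$.

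What remains is exactly the $2r$-divisibility of $\sum_\epsilon\epsilon_1\epsilon_2\epsilon_3\sum_{L=0}^{r-1}L\,\xi^{F_\epsilon(L)}$ (paper's Lemma~3), and this is where your sketch is thinnest. ``Completing the square produces a factor of $r$'' is not by itself enough: the sign cancellations over $\epsilon$ are needed \emph{inside} this step too, not as a separate source of a $4P$. The paper first halves via the $\epsilon\leftrightarrow-\epsilon$ pairing (using odd $r$), then writes $L=r'k+l$ with $r'=r/\gcd(P,r)$; the $k$-weighted term is handled by a CRT decomposition of $g=\gcd(P,r)=g_1g_2g_3$ along the three $\gcd(p_j,r)$ (not along the $p_j$ themselves), producing terms with denominators $1-\xi^{ra_j/g_j}$ whose numerators are shown to be divisible by $1-\xi^{r'A_{+++}}$; the $l$-weighted term is diagonalised to $P'(l-D_\epsilon)^2+C_\epsilon$, and one uses the further identity $\sum_{\epsilon\in E}\epsilon_1\epsilon_2\epsilon_3 D_\epsilon=0$ together with the odd-in-$l$ trick to exhibit an explicit $r'$. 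So your wrap-around/discrepancy idea is the right flavour, but the proof genuinely interleaves the Gauss-sum manipulation with the $\epsilon$-cancellations rather than running them sequentially, and the CRT is on $\gcd(P,r)$, not on $P$.
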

\begin{proof}
    See Appendix~\ref{app:int-coeff}.
\end{proof}

\paragraph{WRT invariant as limits of false theta functions and proof of conjecture}

Finally, we have all the tools to write the WRT invariant as limits of false
theta functions and prove the quantum modularity conjecture for Brieskorn
homology spheres.

\begin{proposition}
    \label{prop:brieskorn-false-theta}
    For Brieskorn homology spheres, $\Sigma(p_1, p_2, p_3)$ with $1/p_1 + 1/p_2
    + 1/p_3 < 1$, the WRT invariant at a generic primitive root of unity of odd
    order, $\xi = e^{\frac{2\pi i s}{r}}$, can be written as a limit of the false theta
    function,
    \begin{equation}
        \xi^{\phi/4 - 1/2}(\xi - 1) \tau_{\Sigma(p_1, p_2, p_3)} (\xi) =
        \frac{1}{2}\tilde\Phi_{\vec p}^{(1, 1, 1)} (s/r).
    \end{equation}
\end{proposition}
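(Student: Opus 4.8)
The plan is to transform the Lawrence--Rozansky surgery formula (\ref{eqn:wrt-seifert}) directly into the claimed false-theta limit. Specialize (\ref{eqn:wrt-seifert}) to $m = 3$, so that the denominator is the single factor $\xi^{n/2} - \xi^{-n/2}$, and use that $\Sigma(p_1,p_2,p_3)$ is an integer homology sphere, hence $e = 1/P$ and $H = eP = 1$. The first step is to expand the numerator product as $\prod_{j=1}^{3}(\xi^{n/2p_j} - \xi^{-n/2p_j}) = \sum_{\epsilon \in \{\pm 1\}^3}\epsilon_1\epsilon_2\epsilon_3\,\xi^{\frac n2\sum_j \epsilon_j/p_j}$, and to record the companion identity $\sum_{l \bmod 2P}\phi_{\vec p}^{(1,1,1)}(l)\,y^l = -y^{P}\prod_{j=1}^{3}(y^{P/p_j} - y^{-P/p_j})$, which is a genuine Laurent polynomial since each $P/p_j \in \Z$. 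Here the hypothesis $1/p_1 + 1/p_2 + 1/p_3 < 1$ is used precisely to guarantee that all exponents $P(1 + \sum_j \epsilon_j/p_j)$ lie in the open interval $(0, 2P)$, so that the eight sign vectors $\epsilon$ are in bijection with the support of $\phi_{\vec p}^{(1,1,1)}$ on a fundamental domain and no wrap-around modulo $2P$ occurs; this is what makes the single periodic function $\phi_{\vec p}^{(1,1,1)}$, rather than a combination, appear.

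The core of the argument is a quadratic Gauss sum reciprocity converting the sum over $n \bmod 2Pr$ into a sum over a dual variable $l \bmod 2Pr$. Completing the square turns $-n^2/4P + \tfrac n2\sum_j \epsilon_j/p_j$ into $-\tfrac{1}{4P}(n - Pa_\epsilon)^2 + \tfrac P4 a_\epsilon^2$ with $a_\epsilon = \sum_j\epsilon_j/p_j$ and $Pa_\epsilon \in \Z$; after shifting $n$ one is left with a standard quadratic Gauss sum twisted by the single factor $1/(\xi^{n/2} - \xi^{-n/2})$, and reciprocity flips this into a sum over $l$ in which the twist becomes precisely the linear (Eichler-integral) weight $1 - l/(Pr)$ that appears in the closed form for $\tilde\Phi_{\vec p}^{(1,1,1)}(s/r)$. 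It is convenient to run the comparison from the other side: using the oddness and $2P$-periodicity of $\phi_{\vec p}^{(1,1,1)}$ together with $\xi^{(2Pr - l)^2/4P} = \xi^{l^2/4P}$, the defining sum $\tfrac12\sum_{l=0}^{2Pr}\phi_{\vec p}^{(1,1,1)}(l)\,\xi^{l^2/4P}(1 - l/Pr)$ collapses to $-\tfrac1{2Pr}\sum_{l \bmod 2Pr} l\,\phi_{\vec p}^{(1,1,1)}(l)\,\xi^{l^2/4P}$, and it is this object that the reciprocity image of the surgery sum produces --- the numerator product of (\ref{eqn:wrt-seifert}) matching the support of $\phi_{\vec p}^{(1,1,1)}$ through the generating identity above, and the single denominator factor $\xi^{n/2}-\xi^{-n/2}$ accounting for the linear weight $l$. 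This is the computation carried out for $\Sigma(2,3,5)$ in \cite{LZ99} and in the generality at hand in \cite{Hikami05Brieskorn}, and I would follow that route, disposing of the residues $r \mid n$ excluded from the surgery sum as in those references.

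It then remains to see that all scalar and phase factors assemble to the coefficient $\tfrac12$. The $\xi$-power $\xi^{Pa_\epsilon^2/4}$ produced by completing the square has fractional part independent of $\epsilon$, equal to $\tfrac P4(1 + \sum_j 1/p_j)^2 = -\CS[(1,1,1)] \bmod 1$ --- the $\epsilon$-independence because $\tfrac P4(1 + \sum_j\epsilon_j/p_j)^2$ differs from $\tfrac P4(1 + \sum_j 1/p_j)^2$ by an integer for every $\epsilon$ --- and this is cancelled against the prefactor $\xi^{\phi/4 - 1/2}$ on the left by Proposition~\ref{prop:phi-cs}, which gives $\phi/4 - 1/2 \equiv -\chi^2/4e \equiv \CS[(1,1,1)] \bmod 1$ (the last step since $-\chi^2/4e = \CS[A_*]$, and $\CS[A_*]$ and $\CS[(1,1,1)]$ differ by $\sum_j P/p_j \in \Z$). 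The remaining constants --- the Jacobi symbol $\left(\tfrac{Pr}{s}\right)$, the Gauss sum value $e^{\pi i/4}/\sqrt{2Pr}$, and the overall $\tfrac12$ --- are reconciled using quadratic reciprocity for the Jacobi symbol and the standard evaluation of the quadratic Gauss sums. I expect this last bookkeeping, rather than the structural steps, to be the main obstacle: extracting the factor $1 - l/(Pr)$ with the correct numerical constant from the twisted Gauss sum, and checking that the Gauss sum phases, the Jacobi symbol and the prefactor $e^{\pi i/4}/(2\sqrt{2Pr})$ combine to exactly $\tfrac12$ rather than $\tfrac12$ times a stray root of unity, is the delicate point; the product expansion, the identification of $\phi_{\vec p}^{(1,1,1)}$, and the input from Proposition~\ref{prop:phi-cs} are essentially routine.
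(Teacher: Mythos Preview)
Your proposal misidentifies the mechanism that removes the denominator $\xi^{n/2}-\xi^{-n/2}$. The argument in \cite{LZ99} and \cite{Hikami05Brieskorn} that you cite, and that the paper follows, does not use Gauss reciprocity on a twisted sum; instead it expands the \emph{entire} rational function
\[
\frac{\prod_{j=1}^3 (u^{P/p_j} - u^{-P/p_j})}{u^P - u^{-P}} \;=\; \sum_{l \geq 0} \phi_{\vec p}^{(1,1,1)}(l)\, u^l, \qquad \lvert u \rvert < 1,
\]
as a convergent power series in $u = \xi^{n/2P} e^{-\pi t/2P}$ with a regularizing parameter $t>0$. The hypothesis $1/p_1+1/p_2+1/p_3<1$ is what makes the right-hand side a genuine power series with no negative powers. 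Once the denominator has been absorbed this way, the inner sum over $n\in\Z/2Pr\Z$ is a \emph{pure} quadratic Gauss sum --- no twist remains --- and is evaluated directly; the remaining limit $\lim_{t\to 0^+}\sum_{l\geq 0}e^{-\pi t l/2P}\phi^{(1,1,1)}_{\vec p}(l)\,\xi^{l^2/4P}$ is identified with $\tilde\Phi^{(1,1,1)}_{\vec p}(s/r)$ via a proposition of Lawrence--Zagier. Your finite Laurent-polynomial identity encodes only the numerator, and there is no standard reciprocity that takes a quadratic Gauss sum twisted by $1/(\xi^{n/2}-\xi^{-n/2})$ and returns one carrying a linear weight $l$; the step you describe as ``reciprocity flips this into a sum \ldots\ in which the twist becomes precisely the linear weight'' is the gap.

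A smaller point: Proposition~\ref{prop:phi-cs} plays no role in this proof. The factor $\xi^{\phi/4-1/2}$ is already built into the surgery formula (\ref{eqn:wrt-seifert}) and stays on the left throughout; after the Gauss sum over $n$ the numerical prefactor
\[
\left(\frac{Pr}{s}\right)\frac{e^{\pi i/4}}{2\sqrt{2Pr}}\cdot\left(\frac{Pr}{s}\right)e^{-\pi i/4}\sqrt{2Pr}
\]
collapses to $\tfrac12$ with no further phase bookkeeping. Proposition~\ref{prop:phi-cs} is invoked only later, in the proof of Theorem~\ref{thm:bri}, to match the geometric flat connection with the label $(1,1,1)$.
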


\begin{proof}
    The proof here follows from results in \cite{LZ99}, and is an adaptation of
    the one given in \cite{Hikami05Brieskorn} to the case of generic roots of
    unity. We recall the formula (\ref{eqn:wrt-seifert}) for the WRT invariant
    \begin{equation}
        \label{eqn:wrt-brieskorn}
        \xi^{\phi/4 - 1/2} (\xi - 1) \tau_{\Sigma(p_1, p_2, p_3)}(\xi) =
        \left(\frac{Pr}{s}\right) \frac{e^{\pi i / 4}}{2\sqrt{2Pr}}
        \quad\sum_{\mathclap{\substack{n \in \Z/2Pr\Z \\ r \nmid n}}}\ \xi^{-n^2/4P}
        \frac{\prod_{j = 1}^3 (\xi^{n/2p_j} - \xi^{-n/2p_j})}{\xi^{n/2} -
        \xi^{-n/2}},
    \end{equation}
    and use the fact that for $1/p_1 + 1/p_2 + 1/p_3 < 1$, there is a
    generating function for the rational function in the summand above in
    terms of the periodic function $\phi^{(1, 1, 1)}_{\vec p}$ from before,
    \begin{equation}
        \frac{\prod_{j = 1}^3 (u^{P/p_j} - u^{-P/p_j})}{u^{P} -
        u^{-P}} = \sum_{l = 0}^{\infty} \phi^{(1, 1, 1)}_{\vec p}(l)\, u^l,
    \end{equation}
    to write it as a limit with $u = \xi^{n/2P} e^{-\pi t/2P}$,
    \begin{equation}
        \frac{\prod_{j = 1}^3 (\xi^{n/2p_j} - \xi^{-n/2p_j})}{\xi^{n/2} -
        \xi^{n/2}} = \lim_{t \to 0^+} \sum_{l = 0}^{\infty} \phi^{(1, 1,
        1)}_{\vec p}(l)\, u^l.
    \end{equation}
    For the sum in (\ref{eqn:wrt-brieskorn}), we have
    \begin{align}
        \sum_{\mathclap{\substack{n \in \Z/2Pr\Z \\ r \nmid n}}} \ 
        \xi^{-n^2/4P} \frac{\prod_{j = 1}^3 (\xi^{n/2p_j} -
        \xi^{-n/2p_j})}{\xi^{n/2} - \xi^{-n/2}} 
        & = \lim_{t \to 0^+} \sum_{l = 0}^\infty e^{-\pi t l/2P} 
        \phi^{(1, 1, 1)}_{\vec p}(l)\ \sum_{\mathclap{n \in \Z/2Pr\Z}}\
        \xi^{-n^2/4P + nl/2P} \nonumber \\
        & = \lim_{t \to 0^+} \sum_{l = 0}^\infty e^{-\pi t l/2P} \phi^{(1, 1,
        1)}_{\vec p}(l)\ \sum_{\mathclap{n \in \Z/2Pr\Z}}\ \xi^{-(n - l)^2/4P +
        l^2/4P} \nonumber \\
        & = \ \sum_{\mathclap{n \in \Z/2Pr\Z}}\ \xi^{-n^2/4P} \lim_{t \to 0^+} 
        \sum_{l = 0}^\infty e^{-\pi t l/2P} \phi^{(1, 1, 1)}_{\vec p}(l)
        \,\xi^{l^2/4P},
        \nonumber
    \end{align}
    where we have used a square completion to simplify the Gauss sum.
    To proceed, we can evaluate the quadratic Gauss sum in the last line above
    with a formula from Appendix~\ref{app:gauss-sums},
    \begin{equation}
        \sum_{n \in \Z/2Pr} \xi^{-n^2/4P} = \frac{1}{2} \sum_{n \in \Z/4Pr}
        \xi^{-n^2/4P} = \left(\frac{Pr}{s}\right) e^{-\pi i /4} \sqrt{2Pr},
    \end{equation}
    (remember that we have assumed
    $s = 1 \bmod 4$), and conclude by using a proposition from \cite{LZ99} for
    the first equality in the following
    \begin{align}
        \lim_{t \to 0^+} \sum_{l = 0}^\infty e^{-\pi t l/2P} \phi^{(1, 1, 1)}_{\vec p}(l)
        \,\xi^{l^2/4P} & = \lim_{t \to 0^+} \sum_{l = 0}^\infty e^{-\pi t l^2/2P} 
        \phi^{(1, 1, 1)}_{\vec p}(l)\, \xi^{l^2/4P} \\
        & = \lim_{t \to 0^+} \tilde \Phi^{(1, 1, 1)}_{\vec p}(s/r + it) =
        \tilde \Phi^{(1, 1, 1)}_{\vec p}(s/r).
    \end{align}
    Putting all the pieces together, the result follows.
\end{proof}

The only Brieskorn homology sphere for which $1/p_1 + 1/p_2 + 1/p_3 < 1$ does
not hold is the Poincar\'e homology sphere, $\Sigma(2, 3, 5)$, and we will deal
with it separately here. When $\vec p = (2, 3, 5)$, the only difference from
the above calculation is due to
\begin{equation}
    \frac{\prod_{j = 1}^3 (u^{P/p_j} - u^{-P/p_j})}{u^P - u^{-P}} = u + u^{-1}
    + \sum_{l = 0}^\infty \phi^{(1, 1, 1)}_{(2, 3, 5)} (l)\, u^l,
\end{equation}
which results in an extra constant in the analogous formula for the WRT
invariant,
\begin{equation}
    \xi^{121/120}(\xi - 1) \tau_{\Sigma(2, 3, 5)}(\xi) = \xi^{1/120} +
    \frac{1}{2} \tilde\Phi^{(1, 1, 1)}_{(2, 3, 5)}(s/r).
\end{equation}

\begin{theorem}
\label{thm:bri}
    The quantum modularity conjecture holds for Brieskorn homology spheres.
\end{theorem}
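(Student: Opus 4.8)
The plan is to combine the four propositions just proved. For a Brieskorn sphere $M=\Sigma(p_1,p_2,p_3)$ one has $H=1$, so $W_M(\xi)=(\xi-1)\tau_M(\xi)$ and the statement to prove is Conjecture~\ref{conj:zhs}; moreover the only abelian flat connection is trivial, so $\pi_0(\mathrm{Hom}(\pi_1M,SL(2,\C)))=\{\text{trivial}\}\sqcup A_{\vec p}$ with Chern--Simons values $0$ and $\CS[\vec a]=-\tfrac{P}{4}(1+\sum_ja_j/p_j)^2\bmod 1$. First I would use Proposition~\ref{prop:brieskorn-false-theta} (and its Poincaré variant) to write $W_M(\xi)$ in terms of $\tilde\Phi^{(1,1,1)}_{\vec p}(s/r)$, and then apply the $S$-transformation quantum modularity of the false theta functions \cite[Proposition~8]{Hikami05Brieskorn} to rewrite $\tfrac12\tilde\Phi^{(1,1,1)}_{\vec p}(s/r)$ as a power series in $s/r$ (the series of $L$-values) plus $-\sqrt{r/(is)}\sum_{\vec b}S^{(1,1,1)}_{\vec b}(\vec p)\,\tfrac12\tilde\Phi^{\vec b}_{\vec p}(-r/s)$ over the $D_{\vec p}$ independent $\vec b\in A_{\vec p}$. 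This already has the shape of (\ref{intro-conj-exp}): the $L$-value series together with the overall factor $\xi^{1/2-\phi/4}$ (and the extra summand $\xi^{-1}$ in the Poincaré case), which are power series in $s/r$, is the $A=\text{trivial}$ term with $P_{\text{trivial}}=1$; and for each non-abelian $\vec b$, Proposition~\ref{prop:int-coeff} applied at $\tilde\xi=e^{-2\pi ir/s}$ (of odd order $s$, since $\gcd(r,s)=1$) lets one write $\tfrac12\tilde\Phi^{\vec b}_{\vec p}(-r/s)=e^{2\pi i\frac{r}{s}\CS[\vec b]}P_{\vec b}(\tilde\xi)$ with $P_{\vec b}\in\Z[\tilde\xi]$, the leftover constant $-\xi^{1/2-\phi/4}\sqrt{r/(is)}\,S^{(1,1,1)}_{\vec b}(\vec p)$ becoming $I_{\vec b}(s/r)\in(s/r)^{-1/2}\C\llbracket s/r\rrbracket$. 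Since $P_{\text{trivial}}=1$ and every $P_{\vec b}\in\Z[\tilde\xi]$, this proves part~1.

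For part~2 I would first identify $A_*$ with the component $\vec a=(1,1,1)$: using $\chi=-1+\sum_j1/p_j$ and the identity $(1+\sum_j1/p_j)^2-(1-\sum_j1/p_j)^2=4\sum_j1/p_j$ together with $\tfrac{P}{4}\cdot4\sum_j1/p_j=\sum_j\prod_{k\ne j}p_k\in\Z$, one gets $\CS[(1,1,1)]\equiv-\chi^2/4e=\CS[A_*]\bmod1$, while the holonomy of the Seifert fibration sends each exceptional fiber generator to the order-$p_j$ rotation of the base orbifold, so its rotation numbers are $(1,1,1)$ (cf.\ \cite{andersen2022resurgence,Hikami05Brieskorn}); hence $P_{A_*}=P_{(1,1,1)}$. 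The polynomial $P_{(1,1,1)}$ attached to $\tfrac12\tilde\Phi^{(1,1,1)}_{\vec p}$ by Proposition~\ref{prop:int-coeff} does not depend on the root of unity at which it is evaluated --- equivalently, $\tfrac12\tilde\Phi^{(1,1,1)}_{\vec p}$ defines an element of the Habiro ring $\widehat{\Z[q]}$ --- so $P_{A_*}(\xi)=\xi^{\CS[(1,1,1)]}\cdot\tfrac12\tilde\Phi^{(1,1,1)}_{\vec p}(s/r)$ with the lift of $\CS[(1,1,1)]$ fixed by Proposition~\ref{prop:int-coeff}. Substituting Proposition~\ref{prop:brieskorn-false-theta} gives, for $\sum_j1/p_j<1$, $P_{A_*}(\xi)=\xi^{\CS[(1,1,1)]+\phi/4-1/2}W_M(\xi)$, and Proposition~\ref{prop:phi-cs} makes that exponent an integer $\delta$, which is (\ref{zhs-P-W}) with $C_N=0$ (as it should be for $\widetilde{SL(2,\R)}$ geometry). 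For the Poincaré sphere the extra $\xi^{1/120}$ gives instead $P_{A_*}(\xi)=\xi^{\CS[(1,1,1)]+121/120}W_M(\xi)-\xi^{\CS[(1,1,1)]+1/120}$, and since the binary icosahedral group has order $120$ the relevant lift is $\CS[(1,1,1)]=-1/120$, collapsing this to $\xi\,W_M(\xi)-1$, i.e.\ $\delta=1$ and $C_N=-1$; this completes part~2, hence the proof for all Brieskorn homology spheres.

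The step I expect to be the main obstacle is the bookkeeping of fractional powers of $\xi$: verifying that, with the lift of $\CS[(1,1,1)]$ fixed by Proposition~\ref{prop:int-coeff} (the geometric one), the exponent $\CS[(1,1,1)]+\phi/4-1/2$ is an honest integer --- precisely the role of Proposition~\ref{prop:phi-cs} --- and, in the spherical case, that the constant $\xi^{1/120}$ appearing in the Poincaré-sphere WRT formula is exactly what produces the shift $C_N=-1$ once the prefactor is absorbed. A related subtle point, needed to pass from the construction (which naturally produces $P_{A_*}$ as a polynomial evaluated at $\tilde\xi$, or at $\xi$) to the clean statement (\ref{zhs-P-W}), is that the $P_A$'s are not merely $s$-dependent polynomials but reductions of genuine Habiro-ring elements; without this the symbol $P_{A_*}(\xi)$ in (\ref{zhs-P-W}) would not even be well defined.
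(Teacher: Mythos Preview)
Your proposal is correct and follows essentially the same route as the paper: express $W_M(\xi)$ via Proposition~\ref{prop:brieskorn-false-theta}, apply the $S$-transformation asymptotics of $\tilde\Phi^{(1,1,1)}_{\vec p}$, identify the sum over $\vec b\in A_{\vec p}$ with the sum over non-trivial flat connections (Proposition~\ref{prop:int-coeff} giving integrality), match $A_*$ with rotation numbers $(1,1,1)$ via the Chern--Simons value, and use Proposition~\ref{prop:phi-cs} to make the exponent $\delta$ integral; the Poincar\'e case is handled identically with the extra constant producing $C_N=-1$.

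One remark: your detour through the Habiro ring is unnecessary and not actually established by what you cite. Proposition~\ref{prop:int-coeff} only asserts $\tfrac12\tilde\Phi^{\vec a}_{\vec p}(s/r)\in\xi^{-\CS[\vec a]}\Z[\xi]$ at each root of unity separately; it does not show these values come from a single element of $\widehat{\Z[q]}$. The paper sidesteps this by defining $P_A$ directly as the function $\zeta\mapsto\tfrac12\zeta^{\CS[\vec a]}\tilde\Phi^{\vec a}_{\vec p}$ evaluated at the argument of $\zeta$, so that the relation $P_{A_*}(\zeta)=\zeta^\delta W_M(\zeta)$ is an immediate consequence of Proposition~\ref{prop:brieskorn-false-theta} applied at $\zeta$ (together with Proposition~\ref{prop:phi-cs}). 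Since the conjecture only requires the identity (\ref{zhs-P-W}) to hold at each root of unity, and since $r,s$ range over all admissible pairs, no Habiro-ring uniformity statement is needed.
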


\begin{proof}
    With the exception of $\Sigma(2, 3, 5)$ which is spherical, all other
    Brieskorn homology spheres $M = \Sigma(p_1, p_2, p_3)$ have $1/p_1 + 1/p_2
    + 1/p_3 < 1$ and are modeled on $\widetilde{SL(2, \R)}$. We start by
    writing the WRT invariant of these manifolds in terms of the false theta
    function as in Proposition~\ref{prop:brieskorn-false-theta},
    \begin{equation}
        W_{M}(\xi) = (\xi - 1)\, \tau_{M}(\xi) = \frac{1}{2} \xi^{1/2 - \phi/4}\,
        \tilde\Phi^{(1, 1, 1)}_{\vec p}(s/r),
    \end{equation}
    use the modular $S$-transformation of the false theta functions, and
    arrange the terms into an expansion around saddles in the following way
    \begin{align}
        W_{M}(\xi) & \simeq -\frac{1}{2} \sqrt{\frac{s}{ir}}
        \xi^{1/2 - \phi/4} \sum_{\vec a} S^{(1, 1, 1)}_{\vec a}(\vec p) \tilde
        \Phi^{\vec a}_{\vec p} (-r/s) + \frac{1}{2} \xi^{1/2 - \phi/4} \sum_{k
        = 0}^\infty \frac{a_k}{k!} \left(\frac{\pi i s}{2Pr}\right)^k \\
        & = \sum_{A \in \pi_0 (\mathcal M\ped{flat}(M, SL(2, \C)))} e^{2\pi
        i \frac{r}{s} \CS[A]} P_{A}(\tilde \xi)\, I_{A}(s/r), 
        \label{eqn:brieskorn-saddles}
    \end{align}
    where $P\ped{triv}(\tilde \xi) = 1$, and
    \begin{equation}
        P_A(\tilde \xi) = \frac{1}{2} \tilde \xi^{\CS[\vec a]} \tilde
        \Phi^{\vec a}_{\vec p}(-r/s)
    \end{equation}
    if $A$ is the flat connection represented by rotation numbers $(a_1, a_2,
    a_3)$; and
    \begin{equation}
        I\ped{triv}(s/r) = \frac{1}{2} \xi^{\frac{1}{2} - \frac{\phi}{4}}
        \sum_{k = 0}^\infty \frac{L(-2k, \phi^{(1, 1, 1)}_{\vec p})}{k!}
        \left(\frac{\pi i s}{2Pr}\right)^k,
    \end{equation}
    and
    \begin{equation}
        I_A(s/r) = -\sqrt{\frac{r}{is}}\, S^{(1, 1, 1)}_{\vec a}(\vec p) \xi^{1/2 -
        \phi/4}
    \end{equation}
    if $A$ is the flat connection represented by rotation numbers $(a_1, a_2,
    a_3)$.

    We have identified the asymptotic series in $s/r$ that appears as the
    failure of modularity in the $S$-transformation of $\tilde\Phi^{(1, 1,
    1)}_{\vec p}$ as the contribution from the trivial flat connection, and
    used the correspondence between non-trivial flat connections and the
    $D_{\vec p}$ independent theta functions to write the sum over the false
    theta functions as a sum over the connected components of $\mathcal
    M\ped{flat}(M, SL(2, \C))$ in (\ref{eqn:brieskorn-saddles}).
 
    Asymptotic series $I_A$ are of the expected type, and the integrality of
    the polynomials $P_A(\tilde \xi) \in \Z[\tilde \xi]$ is implied directly by
    Proposition~\ref{prop:int-coeff}.

    Finally, it follows from (\ref{CS-geom-sl2r}) that, for the geometric flat
    connection on $\widetilde{SL(2, \R)}$ manifolds, $\CS[A_*] = -\chi^2/4e$.
    Comparing the Chern--Simons values tells us that $A_*$ is represented by
    rotation numbers $(1, 1, 1)$, and therefore it follows from
    Propositions~\ref{prop:phi-cs} and \ref{prop:brieskorn-false-theta} that
    $P_{A_*}(\tilde \xi) = \xi^\delta W_M(\tilde \xi)$ for some integer
    $\delta$.

    The case of $\Sigma(2, 3, 5)$ is completely analogous, and we have
    \begin{align}
        W_{\Sigma(2, 3, 5)}(\xi) & = \xi^{-1} + \frac{1}{2} \xi^{-121/120} \tilde
        \Phi^{(1, 1, 1)}_{(2, 3, 5)}(s/r) \\
        & \simeq \sum_{A \in \pi_0 (\mathcal M\ped{flat}(M, SL(2, \C)))} e^{
            \frac{2 \pi i r}{s} \CS[A]} P_A(\tilde \xi) I_A(s/r),
    \end{align}
    with $P\ped{triv}(\tilde\xi) = 1$, and
    \begin{equation}
        P_A(\tilde \xi) = \frac{1}{2} \tilde \xi^{-1/120} \tilde\Phi^{\vec
        a}_{\vec p}(-r/s),
    \end{equation}
    if $A$ is the flat connection represented by rotation numbers $(a_1, a_2,
    a_3)$; and
    \begin{equation}
        I\ped{triv}(s/r) = \xi^{-1} + \frac{1}{2} \xi^{-121/120} \sum_{k =
        0}^\infty \frac{L(-2k, \phi^{(1, 1, 1)}_{(2, 3, 5)})}{k!}
        \left(\frac{\pi i s}{60 r}\right)^k
    \end{equation}
    and
    \begin{equation}
        I_A(s/r) = - \sqrt{\frac{r}{is}} S^{(1, 1, 1)}_{\vec a} \xi^{-121/120}
    \end{equation}
    if $A$ is the flat connection represented by rotation numbers $(a_1, a_2,
    a_3)$. We identify the geometric flat connection by its Chern--Simons
    value, $\CS[A_*] = -1/120$, and notice that
    \begin{equation}
        P_{A_*}(\tilde \xi) = \frac{1}{2} \tilde\xi^{-1/120}
        \tilde\Phi^{(1,1,1)}_{(2, 3, 5)}(-r/s)
        = \tilde \xi W_{\Sigma(2, 3, 5)}(\tilde \xi) - 1,
    \end{equation}
    as required.
\end{proof}

\subsection{More evidence}

\paragraph{Lens spaces $L(p, 1)$} 

For any positive integer $p$, the lens space $L(p, 1)$ has a realization as surgery on
the $p$-framed unknot in $S^3$.  Computation of the WRT invariant of these
spaces is essentially an application of the reciprocity of quadratic Gauss sums. We
start by noting that $J_n(U^p, q) = q^{p(n^2 - 1)/4}[n]_q$, and
therefore for $L(p, 1) = S^3(U^p)$, we need the intermediate quantity
\begin{align}
    F(U^p, \xi) & = \sum_{n = 1}^{r - 1} \xi^{p\frac{n^2 - 1}{4}} [n]^2_\xi =
    \frac{1}{2} \sum_{n = -r}^{r - 1} \xi^{p\frac{n^2 - 1}{4}} [n]_\xi^2 =
    \frac{1}{2} \sum_{n \in \Z/2r\Z} \xi^{p\frac{n^2 - 1}{4}} [n]^2_\xi \\
    & = \frac{\xi^{-p/4}}{2(\xi^{1/2} - \xi^{-1/2})^2} \sum_{n \in \Z/2r\Z}
    \xi^{\frac{pn^2}{4}} (\xi^n + \xi^{-n} - 2),
\end{align}
which can be simplified by applying the reciprocity formula as
\begin{equation}
    \sum_{n \in \Z/2r\Z} \xi^{\frac{pn^2}{4}} = \frac{e^{\pi i /4}
    \sqrt{2r}}{\sqrt{sp}} \sum_{y \in \Z/sp\Z} e^{-\frac{2\pi i r}{sp}y^2} =
    \frac{e^{\pi i /4} \sqrt{2r}}{\sqrt{sp}} \sum_{l \in \Z/s\Z}
    \tilde\xi^{pl^2} \sum_{a \in \Z/p\Z} e^{2\pi i \frac{r}{s}
    \left(-\frac{(sa)^2}{p}\right)},
\end{equation}
and
\begin{align}
    \sum_{n \in \Z/2r\Z} \xi^{\frac{pn^2}{4} \pm n} & = \frac{e^{\pi i /4}
    \sqrt{2r}}{\sqrt{sp}} \sum_{y \in \Z/sp\Z} e^{-\frac{2\pi i r}{sp} \left(y
    \pm \frac{s}{r}\right)^2} \\
    & = \frac{e^{\pi i /4} \sqrt{2r}}{\sqrt{sp}}
    \sum_{l \in \Z/s\Z} \tilde \xi^{pl^2} \sum_{a \in \Z/p\Z} e^{2\pi
    i \frac{r}{s} 
    \left(-\frac{(sa)^2}{p}\right) \mp \frac{4\pi i sa }{p}} \xi^{-1/p},
\end{align}
where we have used $(p, s) = 1$ to decompose the sum over $\Z/sp\Z \cong \Z/s\Z
\oplus \Z/p\Z$. After this decomposition, the sum above over $l$ is precisely a
quadratic Gauss sum, for which there is a closed expression
\begin{equation}
    \sum_{l \in \Z/s\Z} \tilde\xi^{pl^2} = \overline{G(rp, s)} = \left(\frac{rp}{s}\right)
    \sqrt{s},
\end{equation}
where we have used the fact that $s = 1 \bmod 4$. We can put these pieces
together, and normalize with $F(U^{+1}, \xi)$ for the WRT invariant
\begin{equation}
    \xi^{\frac{1 - p}{4}} W_{L(p, 1)}(\xi)  =
    \sum_{\mathclap{a \in \Z/p\Z}}\ e^{2\pi i \frac{r}{s}
    \left(-\frac{s^2 a^2}{p}\right)} \left(\xi^{-1/p} \cos\frac{4\pi s a}{p} - 1
    \right).
\end{equation}
Since, under our assumptions, $s$ is coprime with $\lvert H_1 \rvert=p$, there exists an integer $s^*$ such that $ss^*=1+n\,p$ for some ($s$-dependent) $n\in\Z$. After making the change $a\mapsto s^*a$ of the summation variable, we can read off the decomposition into abelian flat connections as
\begin{align}
    W_{L(p, 1)}(\xi) & = \xi^{\frac{p - 1}{4}} \sum_{a = 0}^{p - 1} e^{2\pi i
    \frac{r}{s} \left(-\frac{a^2}{p}+2na^2+n^2pa\right)} \left(\xi^{-1/p} \cos \frac{4\pi
    a}{p} - 1 \right) \\
    & = \sum_{a \in \pi_0 (\mathcal
    M\ped{flat}\ap{ab}(L(p, 1), SL(2, \C)))} e^{2\pi i \frac{r}{s} \CS[a]}
    W_{L(p, 1)}^{a}(\xi),
\end{align}
where, as before, $\CS[a]$ means a lift of the CS value to $\Q$. Using $\pi_0 (\mathcal M\ped{flat}\ap{ab}(L(p, 1), SL(2, \C))) \cong \{0,
1, \ldots, (p - 1)/2 \}$, we have
\begin{equation}
    W^{a}_{L(p, 1)}(\xi) = 
    \begin{cases}
        \displaystyle \xi^{\frac{p - 1}{4}} \left(\xi^{-\frac{1}{p}} - 1\right) & \text{if $a
        = 0$}, \\
        \displaystyle 2 \xi^{\frac{p - 1}{4}} \left(\xi^{-\frac{1}{p}} \cos \frac{4\pi s a}{p} - 1
        \right) & \text{if $a \neq 0$}.
    \end{cases}
\end{equation}

Since the geometric flat connection for lens spaces is abelian and has CS value
$-1/p$, we have $P\ap{triv}_{A_*}(\tilde \xi) = 0$ because of the way the
decomposition is defined. On the other hand, since
\begin{equation}
    \sum_{a = 0}^{(p - 1)/2} W^{a}_{L(p, 1)}(\tilde \xi) = -p \xi^{\frac{p -
    1}{4}},
\end{equation}
we verify that
\begin{equation}
    P\ap{triv}_{A_*}(\tilde \xi) = \xi^{\frac{1 - p}{4}} \sum_{a \in \pi_0 (\mathcal
    M\ped{flat}\ap{ab}(L(p, 1), SL(2, \C)))} W^{a}_{L(p, 1)}(\tilde \xi) + p.
\end{equation}
Thus the conjecture holds for these lens spaces.

For some non-trivial examples, we can carry out the analysis we did for
Brieskorn homology spheres to some Seifert rational homology spheres. To do so, it would be useful to introduce another basis of false theta functions.
Let $P$ be a positive integer, and for every integer $1 \leq a < P$ define
$2P$-periodic functions
\begin{equation}
    \psi^{(a)}_{2P}(l) = 
    \begin{cases}
        \pm 1 & \text{if $l = \pm a \bmod 2P$}, \\
        0 & \text{otherwise}.
    \end{cases}
\end{equation}
Each $\psi^{(a)}_{2P}$ has an associated theta function
\begin{equation}
    \Psi^{(a)}_P(\tau) = \sum_{l \in \Z} l \psi^{(a)}_{2P}(l) q^{l^2/4P}
    \quad\text{with $q = e^{2\pi i \tau}$}
\end{equation}
defined for $\Im \tau > 0$, which are components of a vector-valued modular
form of weight $3/2$. Under $T$ and $S$ modular transformations, they behave as
\cite[\S~2.2]{Hikami05}
\begin{equation}
    \Psi^{(a)}_P(\tau + 1) = e^{\pi i a^2/2P} \Psi^{(a)}_P(\tau)
    \quad\text{and}\quad
    \Psi^{(a)}_P(\tau) = \left(\frac{i}{\tau}\right)^{3/2} \sum_{b = 1}^{P - 1}
    M^a_b(P) \Psi^{(b)}_P(-1/\tau)
\end{equation}
respectively, where
\begin{equation}
    M^a_b(P) = \sqrt{\frac{2}{P}} \sin \frac{ab}{P}\pi.
\end{equation}
In analogy with what we did for the modular forms $\Phi^{\vec a}_{\vec p}$, we
define false theta functions, which are the Eichler integrals
\begin{equation}
    \tilde\Psi^{(a)}_P(\tau) = \sum_{l = 0}^\infty \psi^{(a)}_{2P}(l)
    q^{l^2/4P},
\end{equation}
whose limit at rational numbers
\begin{equation}
    \tilde\Psi^{(a)}_P(s/r) = \frac{1}{2} \sum_{l = 0}^{2Pr} \psi^{(a)}_{2P}(l)
    \xi^{l^2/4P} \left(1 - \frac{l}{Pr}\right),
\end{equation}
satisfies the following property under a modular $S$-transformation
\cite[Proposition~2]{Hikami05}
\begin{equation}
    \tilde\Psi^{(a)}_P(s/r) + \sqrt{\frac{r}{is}} \sum_{b = 1}^{P - 1} M^a_b(P)
    \tilde\Psi^{(b)}_P(-r/s) \simeq \sum_{k = 0}^\infty \frac{L(-2k,
    \psi^{(a)}_{2P})}{k!} \left(\frac{\pi i s}{2Pr}\right)^k.
\end{equation}

We shall consider rational homology spheres with three exceptional fibers $M =
S^2(b;\, p_1/q_1, p_2/q_2, p_3/q_3)$, where $q_j = \pm 1$ for every $j = 1, 2,
3$. In what follows, we use $p_j$ to denote $p_j/q_j$.  Such manifolds are
described as surgery on a link of unknots that has the linking matrix
\begin{equation}
    B = \begin{bmatrix}
        b & 1 & 1 & 1 \\
        1 & p_1 & 0 & 0 \\
        1 & 0 & p_2 & 0 \\
        1 & 0 & 0 & p_3
    \end{bmatrix},
\end{equation}
and the $n = (n_0, n_1, n_2, n_3)$-colored Jones polynomial
\begin{align}
    J_{n}(L, q) 
    & = \frac{q^{b(n_0^2 - 1)/4}}{[n_0]_q^2} \prod_j q^{p_j(n_j^2 - 1)/4} [n_0
    n_j]_q \\
    & = \frac{1}{q^{1/2} - q^{-1/2}} \frac{q^{b(n_0^2 - 1)/4}}{(q^{n_0/2} -
    q^{-n_0/2})^2} \prod_j q^{p_j(n_j^2 - 1)/4} (q^{n_0 n_j / 2} - q^{n_0 n_j /
    2}).
\end{align}
Using the surgery formula for WRT invariants, we get
\begin{align}
    \xi^{\phi'/4 - 1/2} (\xi - 1) \tau_M(\xi) = (-1)^{b_+} \frac{e^{-\pi
    i \sigma(B)/4}}{2(2r)^2} & \sum_{\substack{n_0 \in \Z/2r\Z \\ n_0 \neq 0, r}}
    \frac{\xi^{bn_0^2/4}}{\xi^{n_0/2} - \xi^{-n_0/2}} \nonumber \\
    & \times \prod_j \sum_{n_j \in
    \Z/2r\Z} \xi^{p_j n_j^2/4 + n_0 n_j/2} \left(\xi^{n_j/2} -
    \xi^{-n_j/2}\right),
    \nonumber
\end{align}
where $\phi' = \Tr B - 3\sigma(B)$.
To get the above formula in a form similar to (\ref{eqn:wrt-seifert}), we
apply the reciprocity formula for quadratic Gauss sums
(Theorem~\ref{thm:reciprocity}) to the sums over $n_j$, and use the assumption
$(s, p_j) = 1$ on $s$ to simplify, so that
\begin{equation}
    \xi^{\phi/4 - 1/2} (\xi - 1) \tau_M(\xi) = - \frac{(-1)^{b_+}}{2
    \sqrt{2\lvert P \rvert r}} \left(\frac{Pr}{s}\right) \frac{e^{\frac{\pi
    i}{4}\sum_j \sigma(p_j)}}{e^\frac{\pi i \sigma(B)}{4}} \Bigg( \cdots
    \Bigg),
\end{equation}
where $\phi = \phi' + \sum_j\frac{1}{p_j}$ and
\begin{equation}
    \Bigg(\cdots\Bigg) = 
    \sum_{\substack{n_0 \in \Z/2r\Z \\ n_0 \neq 0, r}}
    \frac{\xi^{bn_0^2/4}}{\xi^{n_0/2} - \xi^{n_0/2}} \prod_j \sum_{n_j \in
    \Z/p_j\Z} \xi^{-\frac{(n_0 + 2rn_j)^2}{4p_j}} \left(\xi^{\frac{n_0 +
    2rn_j}{2p_j}} - \xi^{-\frac{n_0 + 2rn_j}{2p_j}}\right).
\end{equation}
Following \cite[Proposition~2]{Hikami06Spherical}, we notice that the sum above
is invariant under simultaneous change $n_0 \to n_0 + 2r$ and $n_j \to n_j - 1$
for every $j = 1, 2, 3$. If $p_1$ and $p_2$ are coprime, we can use this
invariance to simplify the sums
\begin{equation}
    \sum_{n_0 = 0}^{2r - 1} \sum_{n_1 \in \Z/p_1\Z,\  n_2 \in \Z/p_2\Z,\  n_3 \in \Z/p_3\Z} =
    \sum_{n_0 = 0}^{2 p_1 p_2 r - 1} \sum_{n_1 = 0,\  n_2 = 0,\  n_3 \in \Z/p_3\Z}.
\end{equation}

In the following examples, we shall use the above formula for the WRT invariant
at generic roots of unity to write it in terms of false theta functions.

\paragraph{$S^2(1;\, 2, 3, 3)$} For the Seifert manifold $S^2(1;\, 2, 3, 3)$,
we can readily compute
\begin{equation}
    e = \frac{1}{6}, \quad
    \chi = \frac{1}{6}, \quad
    \phi = \frac{25}{6}, \quad
    H_1(M, \Z) \cong \coker B \cong \Z/3\Z.
\end{equation}
Since $e \neq 0$ and $\chi > 0$, this manifold must be spherical.
Indeed, it can also be thought of as the quotient of $S^3$ by the binary
tetrahedral group. Apart from the Poincar\'e homology sphere and lens spaces
$L(p, 1)$ for odd $p$, this is the only ADE type spherical manifold that is a
mod-2 homology sphere.

Its WRT invariant is given by $\xi^{13/24}(\xi - 1) \tau_M(\xi) =$
\begin{align}
    \frac{1}{6\sqrt r} \left(\frac{18r}{s}\right) \sum_{\substack{n_0 \in
    \Z/12r\Z \\ r \nmid n_0}} \xi^{-n_0^2/24} & \frac{(\xi^{n_0/4} -
    \xi^{-n_0/4}) (\xi^{n_0/6} - \xi^{-n_0/6})}{\xi^{n_0/2} - \xi^{-n_0/2}}
    \nonumber \\
    & \times \sum_{n_3 \in \Z/3\Z} \xi^{\frac{-r^2 n_3^2 + rn_3n_0}{3}} \left(\xi^{\frac{n_0 +
    2rn_3}{6}} - \xi^{-\frac{n_0 + 2rn_3}{6}} \right).
\end{align}
Analogously to the case of Brieskorn homology spheres, we can proceed by
writing the rational function in the summand above as the limit of a holomorphic
function on the upper half plane, and then complete squares in the exponent of
$\xi$ to simplify the Gauss sum over $n_0$ (see \cite{Hikami06Spherical} for a
similar calculation for the special root of unity $\xi = e^{\frac{2\pi i}{r}}$). After
a bit of work, and, as in the case of lens spaces, the fact that $s$ is coprime with 3,  we can show that
\begin{align}
    \xi^{13/24} W_M(\xi) 
    & = -\frac{1}{2} \sum_{a \in \Z/3\Z} e^{2\pi i
    \frac{r}{s} \left(\frac{s^2 a^2}{3}\right)} \left(\tilde\Psi^{(1) + (5)}_6(s/r) +
    2\xi^{ra/3} \tilde\Psi^{(3)}_6(s/r) - 2 \xi^{1/24}\right) \\
    & =\sum_{a \in \{0, 1\} \cong \pi_0 (\mathcal M\ped{flat}\ap{ab}(M,
    SL(2, \C)))}
    e^{2\pi i \frac{r}{s} \CS[a]} W_M^{a}(\xi)
\end{align}
where we have used the shorthand notation $\tilde\Psi^{n_a (a) + n_b (b) +
\cdots} = n_a \Psi^{(a)} + n_b \Psi^{(b)} + \cdots$. Components of the
decomposition are
\begin{align}
    \xi^{13/24} W_M^{0}(\xi) & = - \frac{1}{2} \tilde\Psi^{(1) + 2(3) + (5)}_6 (s/r) +
    \xi^{1/24} \quad\text{and}\quad \\
    \xi^{13/24} W_M^{1}(\xi) & = - \tilde\Psi^{(1) - (3) + (5)}_6 (s/r) +
    2\xi^{1/24}.
\end{align}
Asymptotic behavior of the above functions as $r \to \infty$ follows from the
modular $S$-transformation of false theta functions, for example\footnote{We can label the components of the moduli space of flat connections by the respective CS values, when they are uniquely determined by them, as in this case.},
\begin{align}
    W^{0}_M(\xi) & \simeq -\frac{1}{2} \tilde\Psi^{3(1) + 3(5)}_6(-r/s)
    \left(-\sqrt{\frac{r}{3is}} \xi^{-13/24} \right) + \xi^{-1/2} -
    \xi^{-13/24} \sum_{k = 0}^\infty \frac{c_k}{k!}\left(\frac{\pi is
    }{12r}\right)^k \\
    & = e^{2\pi i \frac{r}{s} \left(-\frac{1}{24}\right)}
    P^{0}_{-1/24}(\tilde \xi) I^{0}_{-1/24}(s/r) + P^{0}_0(\tilde \xi)
    I^{0}_0(s/r),
\end{align}
where we have denoted the flat connection by its CS value and arranged the
terms in the following way
\begin{equation}
    P^{0}_{-1/24}(\tilde \xi) = -\frac{1}{2} \tilde\xi^{-1/24}
    \tilde\Psi^{3(1) + 3(5)}_6(-r/s)
    \quad\text{and}\quad
    I^{0}_{-1/24}(s/r) = - \sqrt{\frac{r}{3is}} \xi^{-13/24},
\end{equation}
and for the trivial flat connection
\begin{equation}
    P_0^{0}(\tilde \xi) = 1
    \quad\text{and}\quad
    I_0^{0}(s/r) = \xi^{-1/2} - \xi^{-13/24} \sum_{k = 0}^\infty
    \frac{c_k}{k!}\left(\frac{\pi i s}{12 r}\right)^k,
\end{equation}
with coefficients $c_k = L(-2k, \psi^{(1) + 2(3) + (5)}_{12})$. Similarly, it
can be verified that the asymptotic expansion for $W^{1}_M(\xi)$ is of the
required form.

Finally, we recall that for the geometric flat connection $A_*$, we have
$\CS[A_*] = -1/24$, and a direct calculation shows that
\begin{equation}
    P^{0}_{-1/24}(\tilde \xi) = \tilde\xi^{1/2} \sum_{a \in \pi_0 (\mathcal
    M\ped{flat}\ap{ab}(M, SL(2, \C)))} W^{a}_M(\tilde \xi) - 3.
\end{equation}

Thus, we have verified that the Conjecture~\ref{conj:qhs} holds for this
manifold.

\paragraph{$S^2(-1;\, -2, -3, -9)$}
For the Seifert manifold $S^2(-1;\, -2, -3, -9)$, we calculate
\begin{equation}
    e = \frac{1}{18}, \quad
    \chi = -\frac{1}{18}, \quad
    \phi = -\frac{71}{18}, \quad
    H_1(M, \Z) = \Z/3\Z.
\end{equation}
Since $e \neq 0$ and $\chi < 0$, this manifold must be modeled on
$\widetilde{SL(2, \R)}$ geometry. To verify that the conjecture holds for this
manifold, we proceed as in the previous example and write the WRT invariant as
in terms of false theta functions. We start with the decomposition
\begin{align}
    \xi^{-107/72} W_M(\xi) & = \frac{1}{2} \sum_{a \in \Z/3\Z} e^{2\pi i \frac{r}{s}
    \left(\frac{s^2 a^2}{3}\right)} \left(\tilde\Psi^{(1) + (17)}_{18}(s/r) -
    \xi^{ra/3} \tilde\Psi^{(5) + (13)}_{18}(s/r) \right) \\
    & = \sum_{a \in \{0, 1\} \cong \pi_0 (\mathcal M\ped{flat}\ap{ab}(M, SL(2, \C)))}
    e^{2\pi i\frac{r}{s} \CS[a]} \xi^{-107/72} W^{a}_M(\xi),
\end{align}
where
\begin{align}
    \xi^{-107/72} W^{0}_M(\xi) & = \frac{1}{2}\tilde\Psi^{(1) - (5) - (13) +
    (17)}_{18}(s/r) \quad\text{and} \\
    \xi^{-107/72} W^{1}_M(\xi) & = \frac{1}{2}\tilde\Psi^{2(1) + (5) + (13) +
    2(17)}_{18}(s/r).
\end{align}
As before, we can compute the asymptotic expansion of the functions $W^{a}_M$
by using the modular transformation property of false theta functions
\begin{equation}
    W^{0}_M(\xi) \simeq \sum_{A \in \pi_0 (\mathcal M\ped{flat}(M, SL(2, \C)))}
    e^{2\pi i \frac{r}{s} \CS[A]} P^{0}_A(\tilde \xi) I^{0}_A(s/r),
\end{equation}
where
\begin{align*}
    P^{0}_{-1/72}(\tilde \xi) & = \frac{1}{2} \tilde\xi^{-1/72} \tilde\Psi^{3(1)
    + 3(17)}_{18}(-r/s),\quad
    I^{0}_{-1/72} = -\frac{2}{9} \sqrt{\frac{r}{is}} \left(\sin \frac{\pi}{18} -
    \sin\frac{5\pi}{18}\right) \xi^{107/72}, \\
    P^{0}_{-25/72}(\tilde \xi) & = \frac{1}{2} \tilde\xi^{-25/72} \tilde\Psi^{3(5)
    + 3(13)}_{18}(-r/s),\quad
    I^{0}_{-25/72} = -\frac{2}{9} \sqrt{\frac{r}{is}} \left(\sin \frac{5\pi}{18} +
    \sin\frac{7\pi}{18}\right) \xi^{107/72}, \\
    P^{0}_{-49/72}(\tilde \xi) & = \frac{1}{2} \tilde\xi^{-49/72} \tilde\Psi^{3(7)
    + 3(11)}_{18}(-r/s),\quad
    I^{0}_{-49/72} = -\frac{2}{9} \sqrt{\frac{r}{is}} \left(\sin \frac{\pi}{18} +
    \sin\frac{7\pi}{18}\right) \xi^{107/72},
\end{align*}
for the trivial flat connection, we have
\begin{equation}
    P^{0}\ped{triv}(\tilde \xi) = 1,\quad
    I^{0}\ped{triv}(s/r) = \sum_{k = 0}^\infty \frac{L(-2k, \psi^{(1) - (5) - (13) +
    (17)}_{18})}{k!} \left(\frac{\pi i s}{36r}\right)^k,
\end{equation}
as usual, and $P^{0}_{-19/24}(\tilde \xi) = 0$ because $W^{0}_M(\xi)$ does
not receive any contribution from the non-abelian flat connection with $\CS =
-19/24$. It can be verified that the other piece $W^{1}_M(\xi)$ also has an
asymptotic expansion of the expected type.

Finally, since for the geometric flat connection $\CS[A_*] = -1/72 \bmod 1$, we
can verify that 
\begin{equation}
    P_{-1/72}(\tilde \xi) = \tilde\xi^{-3/2} \sum_{a \in \pi_0 (\mathcal
    M\ped{flat}\ap{ab}(M, SL(2, \C)))} W^{a}_M(\xi).
\end{equation}

\paragraph{Seifert manifolds $S^2(0;\, p, -2p - 1, -2p - 1)$} As a final
example, we consider the family of Seifert manifolds $M = S^2(0;\, p, -2p - 1, -2p
- 1)$ for positive integers $p \geq 2$. We can calculate
\begin{equation}
    e = \frac{1}{p(2p + 1)},\quad \chi = -\frac{2p^2 - 3p - 1}{p (2p +
    1)}, \quad H_1(M, \Z) = \Z/(2p + 1)\Z.
\end{equation}
These manifolds are mod-2 homology spheres, and since $e \neq 0$ and $\chi < 0$
they must be modeled on $\widetilde{SL(2, \R)}$. The calculation of the WRT
invariant in terms of false theta functions proceeds as in the earlier
examples.
Let $H = \lvert H_1(M, \Z) \rvert = 2p + 1$, $P = p (2p + 1)$, and $\Delta_p =
\phi/4 - 1/2$. Also define three integers $u = P - 4p - 1$, $v = P - 2p - 1$,
and $w = P - 1$ that lie between $0$ and $P$. We have the WRT invariant
\begin{align}
    \xi^{\Delta_p} W_M(\xi) & = \frac{1}{2} \sum_{a \in \Z/H\Z} e^{2\pi i
    \frac{r}{s} \left(\frac{s^2 a^2}{H}(p + 1) \right)} \left(\tilde\Psi^{(u) +
    (w)}_{P}(s/r) - 2\xi^{ra/H} \tilde\Psi^{(v)}_P(s/r)\right) \\
    & = \sum_{a \in \pi_0 (\mathcal M\ped{flat}\ap{ab}(M, SL(2, \C)))} e^{2\pi i \frac{r}{s}
    \CS[a]} \xi^{\Delta_p} W^{a}_M(\xi),
\end{align}
where $\pi_0 (\mathcal M\ped{flat}\ap{ab}(M, SL(2, \C))) \cong \{0, 1, \ldots,
p\}$, and we can read off the terms
\begin{equation}
    \xi^{\Delta_p} W^{a}_M(\xi) = 
    \begin{cases}
        \displaystyle \frac{1}{2} \tilde\Psi^{(u) - 2(v) + (w)}_P(s/r) &
        \text{if $a = 0$}, \\
        \displaystyle \tilde\Psi^{(u) + (w)}_P(s/r) - 2\cos\frac{2\pi s a}{H}
        \tilde\Psi^{(w)}_P(s/r) & \text{otherwise}.
    \end{cases}
\end{equation}
As the terms $W_M^{a}(\xi)$ are given by false theta functions (up to an
overall fractional power of $\xi$), their asymptotics are determined by the
behavior of false theta functions under a modular $S$-transformation. As
before the resulting terms can be interpreted as expansions around flat
connections. Of particular interest is the term
\begin{align}
    W^{0}_M(\xi) & \simeq -\sqrt{\frac{r}{is}} \xi^{-\Delta_p} \sum_{a = 0}^{P -
    1} (M^u_a - 2M^v_a + M^w_a) \frac{1}{2} \tilde \Psi^{(a)}_P(-r/s) + \sum_{k
    = 0}^\infty \frac{c_k}{k!} \left(\frac{\pi i s}{2Pr}\right)^k \\
    & = e^{2\pi i \frac{r}{s} \left(-\frac{(P - 1)^2}{4P}\right)}
    \tilde\xi^{-\frac{(P - 1)^2}{4P}} \frac{1}{2} \tilde\Psi^{H(u) +
    H(w)}_P(-r/s) \Big[ \cdots \Big] + \text{other flat
    connections},
\end{align}
because for the geometric flat connection $\CS[A_*] = - \chi^2/4e = - (P -
1)^2/4P \bmod 1$. And we can read off
\begin{equation}
    P^{0}_{A_*}(\tilde \xi) = \tilde \xi^{-\frac{(P - 1)^2}{4P}}
    \frac{1}{2} \tilde\Psi^{H(u) + H(v)}_P(-r/s)
\end{equation}
and
\begin{equation}
    I^{0}_{A_*}(s/r) = -\frac{1}{H} \sqrt{\frac{r}{is}} \xi^{-\Delta_p}
    \left(M^u_u(P) -2 M^v_u(P) + M^w_u(P)\right),
\end{equation}
and verify that
\begin{equation}
    P^{0}_{A_*}(\tilde \xi) = \tilde\xi^{\Delta_p - \frac{(P - 1)^2}{4P}} \sum_{a
    \in \pi_0 (\mathcal M\ped{flat}\ap{ab}(M, SL(2, \C)))} W^{a}_M(\xi),
\end{equation}
as in Conjecture~\ref{conj:qhs}.

We also refer to \cite{NPQ2025} for the analysis of the asymptotic expansion of WRT invariant at general roots of unity for some other manifolds.
 
\section*{Acknowledgments}

P.P. would like to thank Stavros Garoufalidis, Sergei Gukov, Thomas Nicosanti, Du Pei, Josef Svoboda, Johann Quenta for
discussions on the topic. A.S. would like to thank Asem Abdelraouf for
discussions about quadratic Gauss sums.

\appendix

\section{Gauss sum miscellanea}

\label{app:gauss-sums}

For a positive integer $r$, and an integer $s$, the \emph{quadratic Gauss sum}
is defined as
\begin{equation}
    G(s, r) \coloneqq \sum_{n \in \Z/r\Z} e^{\frac{2\pi i s n^2}{r}}.
\end{equation}
When $s$ and $r$ are coprime, the Gauss sum $G(s, r)$ can be evaluated
explicitly \cite{Murty17},
\begin{equation}
    G(s, r) = 
    \begin{cases}
        \displaystyle \left(\frac{r}{s}\right) (1 + i^{s}) \sqrt{r} & r = 0 \bmod 4, \\
        \displaystyle \left(\frac{s}{r}\right) \sqrt{r} & r = 1 \bmod 4, \\
        \;0 \phantom{\displaystyle\frac{1}{2}} & r = 2 \bmod 4, \\
        \displaystyle \left(\frac{s}{r}\right) i \sqrt{r} & r = 3 \bmod 4,
    \end{cases}
\end{equation}
where $\left(\frac{\cdot}{\cdot}\right)$ is the \emph{Jacobi symbol}.
If $s$ and $r$ are not coprime, let $g = \gcd(s, r)$ and define $s_1 = s/g$,
$r_1 = r/g$ so that $s_1$ and $r_1$ are coprime, and a short calculation shows
that $G(s, r) = g\, G(s_1, r_1)$.

We shall also come across quadratic Gauss sums with a linear piece in its
exponent like
\begin{equation}
    \sum_{n \in \Z/r\Z} \xi^{Pn^2 + 2An},
\end{equation}
where $\xi = e^{\frac{2\pi i s}{r}}$ is a primitive $r$-th root of unity and $P$, $A$
are integers. Such sums can be evaluated with a square completion argument as
follows. Suppose that $P$ is coprime with $r$ (we shall relax
this condition in the next paragraph) and that $r$ is odd. Since we are working
modulo $r$ in the exponent to the root of unity $\xi$, we have
\begin{equation}
    Pn^2 + An = P(n + 2^* P^* A)^2 - 4^* P^* A^2,
\end{equation}
where $^*$ denotes inversion in $\Z/r\Z$, so that
\begin{equation}
    \sum_{n \in \Z/r\Z} \xi^{Pn^2 + 2An} = \xi^{-A^2/4P} \sum_{n \in \Z/r\Z}
    \xi^{P(n + 2^*P^* A)^2} = \xi^{-A^2/4P} G(sP, r),
\end{equation}
where $-A^2/4P$ in the exponent should be understood as an element of $\Z/r\Z$.

If $P$ is not coprime with $r$ (and therefore not invertible in $\Z/r\Z$),
let $g = \gcd(r, P)$, define $r_1 = r/g$, $P_1 = P/g$. And instead of summing
over $n \in \Z/r\Z$, let $n = r_1j + k$ and sum over $j = 0, \ldots, g - 1$ and
$k = 0, \ldots, r_1 - 1$ for
\begin{equation}
    \sum_{n = 0}^{r - 1} \xi^{Pn^2 + An} = \sum_{j = 0}^{g - 1} \xi^{r_1 Aj}
    \sum_{k = 0}^{r_1 - 1} \xi_1^{P_1 k^2 + A_1 k} = g \delta_{g \mid A}\,
    \xi_1^{-A_1^2/4P_1} G(sP_1, r_1).
\end{equation}
where the sum over $j$ is a sum of $g$-th roots of unity $\xi^{r_1 A}$, and is
zero unless $g$ divides $A$, and the sum over $k$ is like the one we started
out with, in which $r$, $P$, and $A$ are replaced by $r_1$, $P_1$, and $A_1$
respectively. Note that the expression on the right-hand side vanishes unless
$A_1$ is an integer, so we can use the square completion argument as before.

Using the above formulas, we have
\begin{equation}
    F(U^{\pm 1}, q) = \mp \frac{q^{\mp 3/4}}{q^{1/2} - q^{-1/2}}
    \frac{G_\pm(q^{1/4})}{2} = \mp \left(\frac{r}{s}\right) 
    \left(\frac{1 \pm i^s}{\sqrt 2}\right)
    \sqrt{2r} \frac{q^{\mp 3/4}}{q^{1/2} - q^{-1/2}}.
\end{equation}
To see why the first equality is true, consider the Gauss sum involved in
$F(U^{\pm 1}, q)$, for $\sigma = \pm 1$, 
\begin{align}
    \sum_{n = 1}^{r - 1} q^{\sigma (n^2 - 1) / 4} [n]^2
    & = \frac{q^{-\sigma/4}}{(q^{1/2} - q^{-1/2})^2} \sum_{n = 1}^{r - 1}
    q^{\sigma n^2/4} (q^{n} + q^{-n} - 2) \\
    & = \frac{q^{-\sigma/4}}{(q^{1/2} - q^{-1/2})^2} \sum_{n \in \Z/2r\Z}
    q^{\sigma n^2/4} (q^n - 1) \\
    & = \frac{q^{-\sigma/4}}{(q^{1/2} - q^{-1/2})^2} (q^{-\sigma} - 1) 
    \sum_{n \in \Z/2r\Z} q^{\sigma n^2/4} \\
    & = -\sigma\frac{q^{-3\sigma/4}}{q^{1/2} - q^{-1/2}} \sum_{n \in \Z/2r\Z}
    q^{\sigma n^2/4}.
\end{align}
With this, the normalization in (\ref{eqn:wrt-def}) can be written as
\begin{equation}
    F(U^{\pm 1}, q)^{b_+} F(U^{\pm 1}, q)^{b_-} = (-1)^{b_+}
    \left(\frac{r}{s}\right)^N \left(\frac{1 + i^s}{\sqrt 2}\right)^{\sigma(B)}
    \frac{(2r)^{N/2} q^{-3\sigma(B)/4}}{(q^{1/2} - q^{1/2})^N},
\end{equation}
where $\sigma(B) = b_+ - b_-$ is the signature of the linking matrix.

\begin{theorem}[Deloup and Turaev \cite{deloup2007reciprocity}]
    \label{thm:reciprocity}
    Let $\langle\cdot,\cdot\rangle$ be an inner product on $\R^N$, 
    $B : \R^N \to \R^N$ be a self-adjoint automorphism, $\psi \in \R^N$, and $r$
    a positive integer. Assume that $B(\Z^N) \subseteq \Z^N$ and
    \begin{equation}
        \frac{r}{2}\langle x, Bx\rangle,\ 
        \langle x, Bx^\prime\rangle,\ 
        \langle x, rx^\prime\rangle,\ 
        r \langle x, \psi \rangle \in \Z 
        \quad \text{for all $x, x^\prime \in \Z^N$}.
    \end{equation}
    Then 
    \begin{align}
        \sum_{x \in \Z^N / r\Z^N}
        \exp & \left( \frac{\pi i}{r} \langle x, Bx \rangle 
        + 2\pi i\langle x, \psi \rangle \right) \nonumber \\
        & =
        \frac{e^{\pi i \sigma / 4} r^{N/2}}{\lvert\det B \rvert^{1/2}} \sum_{y \in
        \Z^N / B\Z^N} \exp\left(- \pi i r \langle y + \psi,
        B^{-1}(y + \psi)\rangle \right),
    \end{align}
    for arbitrary lifts of $x, y$ to $\Z^N$.
\end{theorem}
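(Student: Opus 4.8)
\emph{Strategy.} The plan is to obtain the reciprocity from Poisson summation, realizing both finite sums as two collapses of one regularized Gaussian series on $\R^N$. Write $G(x)\coloneqq\exp\!\left(\tfrac{\pi i}{r}\langle x,Bx\rangle+2\pi i\langle x,\psi\rangle\right)$ and introduce the damping factor $e^{-\pi\epsilon\langle x,x\rangle}$, $\epsilon>0$, to be sent to $0^+$ only at the end. I would first note that the four integrality hypotheses are precisely what make the two quotient sums meaningful: from $\langle x,Bx'\rangle\in\Z$, $\tfrac r2\langle x,Bx\rangle\in\Z$ and $r\langle x,\psi\rangle\in\Z$ one verifies $G(x+re)=G(x)$ for $e\in\Z^N$, so the left summand is a function on $\Z^N/r\Z^N$; the same conditions, together with $r\langle x,x'\rangle\in\Z$, will render the right-hand phase constant on cosets of $B\Z^N$.

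\emph{Step 1 (periodization).} Since $G$ is $r\Z^N$-periodic,
\begin{equation*}
  \sum_{x\in\Z^N}G(x)\,e^{-\pi\epsilon\langle x,x\rangle}
  =\sum_{c\in\Z^N/r\Z^N}G(c)\sum_{m\in\Z^N}e^{-\pi\epsilon\langle c+rm,\,c+rm\rangle},
\end{equation*}
and as $\epsilon\to0^+$ the inner lattice sum is asymptotic to $(\epsilon r^2)^{-N/2}$ times the reciprocal covolume of $\Z^N$, uniformly in $c$. Hence the left-hand side of the theorem is $\lim_{\epsilon\to0^+}(\epsilon r^2)^{N/2}$ times $\sum_{x\in\Z^N}G(x)e^{-\pi\epsilon\langle x,x\rangle}$, up to that covolume factor.

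\emph{Step 2 (Fourier transform).} Applying Poisson summation (with respect to $\langle\cdot,\cdot\rangle$) and completing the square gives the Gaussian Fourier transform
\begin{equation*}
  \widehat{h}(\xi)=\frac{1}{\sqrt{\det\!\bigl(\epsilon I-\tfrac{i}{r}B\bigr)}}\,
  \exp\!\Bigl(-\pi\bigl\langle\psi-\xi,\;\bigl(\epsilon I-\tfrac{i}{r}B\bigr)^{-1}(\psi-\xi)\bigr\rangle\Bigr),
\end{equation*}
where the branch of the root is fixed by continuation from $\epsilon\gg0$. As $\epsilon\to0^+$ the eigenvalue $\epsilon-i\lambda_j/r$ contributes $e^{\mp i\pi/4}$ according to $\operatorname{sign}\lambda_j$, so the prefactor tends to $e^{i\pi\sigma/4}r^{N/2}/\lvert\det B\rvert^{1/2}$, with $\sigma$ the signature of $B$, while the exponent tends to the pure phase $-\pi i r\langle\psi-\xi,B^{-1}(\psi-\xi)\rangle$.

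\emph{Step 3 (coset reorganization) and main obstacle.} Finally I would group the dual sum over cosets $\xi=\beta+Bm$. Expanding $(\epsilon I-\tfrac ir B)^{-1}=irB^{-1}+\epsilon r^2B^{-2}+\cdots$, the leading phase is constant along each coset --- this is exactly where $r\langle x,x'\rangle\in\Z$, $\langle x,Bx'\rangle\in\Z$ and $\tfrac r2\langle x,Bx\rangle\in\Z$ force $\exp\!\bigl(-\pi i r\langle\psi-\xi,B^{-1}(\psi-\xi)\rangle\bigr)$ to depend only on $\beta$ --- while the first correction is a Gaussian in $m$ whose sum behaves like $(\epsilon r^2)^{-N/2}$, cancelling the $(\epsilon r^2)^{N/2}$ from Step 1. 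Relabeling $y=-\beta$ then reproduces $\sum_{y\in\Z^N/B\Z^N}\exp(-\pi i r\langle y+\psi,B^{-1}(y+\psi)\rangle)$. I expect the main difficulty to be the rigorous bookkeeping of this single $\epsilon\to0^+$ limit: justifying the interchange of the coset-grouped Poisson sum with the limit by dominated convergence, tracking the reciprocal covolume factors (of $\Z^N$ and of the dual lattice) so that the dual $B\Z^N$-cosets are indexed exactly by $\Z^N/B\Z^N$, and pinning down the $\epsilon\gg0$ branch of $\sqrt{\det(\epsilon I-iB/r)}$ that produces the signature phase $e^{i\pi\sigma/4}$.
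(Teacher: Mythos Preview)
The paper does not give its own proof of this theorem; it is simply quoted from Deloup and Turaev's paper \cite{deloup2007reciprocity} as a tool used in Appendix~\ref{app:gauss-sums}. So there is no in-paper argument to compare against.

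That said, your Poisson-summation approach with a Gaussian regulator is precisely the standard route to such reciprocity formulas (and is, in outline, how Deloup and Turaev themselves proceed). The three steps---periodize, Fourier transform the regularized Gaussian to extract the signature phase, regroup the dual sum into $B\Z^N$-cosets---are correct, and your identification of the main analytic bookkeeping (dominated convergence for the $\epsilon\to0^+$ interchange, the branch of $\sqrt{\det(\epsilon I-iB/r)}$ giving $e^{i\pi\sigma/4}$) is accurate.

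One point to watch carefully in Step~3: when you apply Poisson summation to $\sum_{x\in\Z^N}$, the dual sum runs over the lattice $(\Z^N)^\vee=\{\xi:\langle x,\xi\rangle\in\Z\ \forall x\in\Z^N\}$, and the hypotheses as written only give $r\Z^N\subseteq(\Z^N)^\vee$ and $B\Z^N\subseteq(\Z^N)^\vee$, not $(\Z^N)^\vee=\Z^N$. If the inner product does not make $\Z^N$ unimodular, a covolume factor survives and the coset index set is $(\Z^N)^\vee/B\Z^N$ rather than $\Z^N/B\Z^N$. You flag this under ``tracking the reciprocal covolume factors,'' which is the right instinct; just be aware that resolving it cleanly may require either an additional unimodularity hypothesis or carrying the covolume through explicitly, as in the original reference.
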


\section{Integrality of false theta limits (proof of Proposition~\ref{prop:int-coeff})}

\label{app:int-coeff}

\begin{proposition*}
    \intcoeffprop
\end{proposition*}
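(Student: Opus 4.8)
The plan is to display the finite sum that defines $\tilde\Phi^{\vec a}_{\vec p}(s/r)$ as an element of the claimed module in two stages: first isolate the fractional power of $\xi$, then reduce the remaining integrality to a variant of the (known) integrality of the WRT invariant. First I would simplify the Lawrence--Zagier limit. Since $\phi^{\vec a}_{\vec p}(-l)=-\phi^{\vec a}_{\vec p}(l)$ while $l\mapsto\xi^{l^2/4P}$ is even, the constant part of the limit vanishes, $\sum_{l\in\Z/2Pr\Z}\phi^{\vec a}_{\vec p}(l)\,\xi^{l^2/4P}=0$, so that $\tilde\Phi^{\vec a}_{\vec p}(s/r)=-\tfrac{1}{2Pr}\sum_{l=0}^{2Pr-1}l\,\phi^{\vec a}_{\vec p}(l)\,\xi^{l^2/4P}$; this a priori lies in $\tfrac{1}{2Pr}\,\mathcal O_K$ with $K=\Q(\zeta_{4Pr})$, each $\xi^{l^2/4P}$ being a $4Pr$-th root of unity.

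To pin down the fractional power, note that on the support of $\phi^{\vec a}_{\vec p}$ one has $l\equiv P\mu_\epsilon\bmod 2P$ with $\mu_\epsilon=1+\sum_j\epsilon_j a_j/p_j$ for some $\epsilon\in\{\pm1\}^3$, and since $P\mu_\epsilon\in\Z$ this gives $l^2/4P\equiv\tfrac P4\mu_\epsilon^2\bmod\Z$. The elementary lemma I would prove is that, with $\nu_\epsilon=\sum_{j:\epsilon_j=-1}a_j/p_j$, one has $\tfrac P4\mu_\epsilon^2-\tfrac P4\mu_+^2=-P\nu_\epsilon(\mu_+-\nu_\epsilon)\in\Z$: indeed $P\nu_\epsilon\in\Z$, and when $P\nu_\epsilon(\mu_+-\nu_\epsilon)$ is expanded every resulting term carries an extra integer factor $\prod_{k\neq j}p_k$. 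Since $-\CS[\vec a]\equiv\tfrac P4\mu_+^2\bmod\Z$, we conclude $\xi^{l^2/4P}\in\xi^{-\CS[\vec a]}\Z[\xi]^\times$ for every $l$ in the support, and it remains to show that $\xi^{\CS[\vec a]}\tilde\Phi^{\vec a}_{\vec p}(s/r)$, a priori in $\tfrac{1}{2Pr}\Z[\zeta_{4Pr}]$, in fact lies in $\Z[\xi]$.

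For this I would follow the computation in the proof of Proposition~\ref{prop:brieskorn-false-theta}, run backwards and with general rotation numbers. Using the generating-function identity $\tfrac{\prod_j(u^{a_jP/p_j}-u^{-a_jP/p_j})}{u^P-u^{-P}}=\sum_{l\ge0}\phi^{\vec a}_{\vec p}(l)u^l$ (with an explicit Laurent-polynomial correction when $1/p_1+1/p_2+1/p_3\ge1$, exactly as for $\Sigma(2,3,5)$) together with a square completion of the Gauss sum, $\tilde\Phi^{\vec a}_{\vec p}(s/r)$ becomes a root of unity times $\tfrac{1}{2\sqrt{2Pr}}\sum_{r\nmid n}\xi^{-n^2/4P}\,\tfrac{\prod_j(\xi^{a_jn/2p_j}-\xi^{-a_jn/2p_j})}{\xi^{n/2}-\xi^{-n/2}}$, and the reciprocity of quadratic Gauss sums (Theorem~\ref{thm:reciprocity}, Appendix~\ref{app:gauss-sums}) then rewrites this as a finite sum over $\prod_j\Z/p_j\Z$ of roots of unity, the $\tfrac{1}{2\sqrt{2Pr}}$ being absorbed by the $\sqrt{2Pr}$ produced by reciprocity and by the $n\mapsto-n$ symmetry; a valuation estimate on this lattice sum then shows that no denominator survives. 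For $\vec a=(1,1,1)$ one can avoid this and conclude directly: Propositions~\ref{prop:brieskorn-false-theta} and~\ref{prop:phi-cs} together with $\tau_M(\xi)\in\Z[\xi]$ give $\tfrac12\tilde\Phi^{(1,1,1)}_{\vec p}(s/r)=\xi^{\phi/4-1/2}(\xi-1)\tau_M(\xi)\in\xi^{-\CS[(1,1,1)]}\Z[\xi]$ at once. Finally I would check that the algebraic integer obtained is fixed by $\mathrm{Gal}(\Q(\zeta_{4Pr})/\Q(\zeta_r))$ — a Galois element permutes the lattice-sum variables and, combined with the $\xi^{\CS[\vec a]}$ twist, leaves the value unchanged — so that it lies in $\mathcal O_{\Q(\zeta_r)}=\Z[\xi]$.

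The hard part is the integrality step for general $\vec a$, that is, showing the $2Pr$ in the denominator genuinely clears. This is a variant, for arbitrary rotation numbers rather than only $(1,1,1)$, of the known but nontrivial integrality of the WRT invariant of an integer homology sphere, and tracking the cancellation — in particular the ramification at primes dividing $2Pr$ and the combinatorics of the triple product $\prod_j(\xi^{a_jn/2p_j}-\xi^{-a_jn/2p_j})/(\xi^{n/2}-\xi^{-n/2})$ — is the main obstacle; the fractional-power bookkeeping and the Galois descent are, by comparison, routine.
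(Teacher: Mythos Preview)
Your preliminary steps match the paper: the antisymmetry simplification to $\tfrac12\tilde\Phi^{\vec a}_{\vec p}(s/r)=-\tfrac{1}{4Pr}\sum_l l\,\phi^{\vec a}_{\vec p}(l)\,\xi^{l^2/4P}$, and the identification of the common fractional exponent via $\tfrac{P}{4}\mu_\epsilon^2\equiv\tfrac{P}{4}\mu_+^2\equiv-\CS[\vec a]\bmod\Z$. Once this is done, $\xi^{\CS[\vec a]}\cdot\tfrac12\tilde\Phi^{\vec a}_{\vec p}(s/r)$ already lies in $\tfrac{1}{4Pr}\Z[\xi]$, since every exponent is now an integer; your Galois descent from $\Q(\zeta_{4Pr})$ is therefore unnecessary. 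The entire content of the proposition is that the denominator $4Pr$ clears.

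Here is the gap: you do not clear it. Your plan is to run Proposition~\ref{prop:brieskorn-false-theta} backward with general rotation numbers, reach a WRT-shaped sum $\tfrac{1}{2\sqrt{2Pr}}\sum_{r\nmid n}\xi^{-n^2/4P}\prod_j(\xi^{a_jn/2p_j}-\xi^{-a_jn/2p_j})/(\xi^{n/2}-\xi^{-n/2})$, and then appeal to reciprocity and ``a valuation estimate''; but you yourself flag this as the main obstacle and do not carry it out. For $\vec a=(1,1,1)$ the backward step genuinely recovers $(\xi-1)\tau_M(\xi)$ and Habiro--Murakami integrality finishes the argument. For any other $\vec a$ the resulting sum is not the WRT invariant of any 3-manifold, so there is no integrality black box to invoke; and the reciprocity you propose to apply is the same one used to derive the Lawrence--Rozansky form in the first place, so it undoes the simplification rather than producing a manifestly integral lattice sum.

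The paper's proof is entirely different and self-contained --- it never touches reciprocity or WRT integrality. After the fractional-power removal it substitutes $l=2PL+P\mu_\epsilon$, so the weight $l$ splits into three pieces $2PL+P+P\sum_j\epsilon_ja_j/p_j$. The latter two contribute zero (Lemmas~\ref{lem:prop-2-term-2} and~\ref{lem:prop-2-term-3}), thanks to the shift relations $F'_{-\epsilon}(l)=F'_\epsilon(-l-1)$ and $F'_{\sigma_j\epsilon}(l)=F'_\epsilon(l-a'_jp'^*_j)$ among the reduced quadratic forms combined with the sign cancellation $\sum_\epsilon\epsilon_1\epsilon_2\epsilon_3=0$. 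The surviving piece is $\tfrac{1}{2r}\sum_{\epsilon}\epsilon_1\epsilon_2\epsilon_3\sum_{L=0}^{r-1}L\,\xi^{F_\epsilon(L)}$: the $\tfrac12$ is absorbed by pairing $\epsilon\leftrightarrow-\epsilon$ (this uses $r$ odd), and the $\tfrac1r$ is cleared by writing $L=r'k+l$ with $r'=r/\gcd(r,P)$, evaluating the $k$-sum via $\sum_{n=0}^{h-1}n\chi^n=-h/(1-\chi)$, and checking by hand that the apparent pole $(1-\xi^{ra_j/g_j})^{-1}$ is cancelled by a matching factor of the remaining $l$-sum. The mechanism throughout is the family of shift relations among the $F_\epsilon$, which your reciprocity route does not surface.
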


Recall the definition of $\tilde\Phi^{\vec a}_{\vec p}(s/r)$
\begin{equation}
    \tilde\Phi^{\vec a}_{\vec p}(s/r) = \frac{1}{2} \sum_{l = 1}^{2Pr}
    \phi^{\vec a}_{\vec p}(l) \left(1 - \frac{l}{Pr}\right) \xi^{l^2/4P},
\end{equation}
where the $2P$-periodic function $\phi^{\vec a}_{\vec p}$ is defined by
\begin{equation}
    \phi^{\vec a}_{\vec p}(l) = 
    \begin{cases}
        -\epsilon_1 \epsilon_2 \epsilon_3 & \text{if}\ l = P\left(1 + \sum_{j =
        1}^3 \frac{\epsilon_j a_j}{p_j}\right) \bmod 2P, \\
        0 & \text{otherwise},
    \end{cases}
\end{equation}
for $\epsilon_j = \pm 1$. With the antisymmetry property $\phi^{\vec a}_{\vec
p}(-l) = -\phi^{\vec a}_{\vec p}(l)$, we have
\begin{equation}
    \sum_{l = 1}^{2Pr} \phi^{\vec a}_{\vec p}(l) \xi^{l^2/4P} = 0,
\end{equation}
so that
\begin{equation}
    \frac{1}{2} \tilde\Phi^{\vec a}_{\vec p}(s/r) = \frac{1}{4Pr} \sum_{l = 1}^{2Pr}
    \phi^{\vec a}_{\vec p}(l) l \xi^{l^2/4P}.
\end{equation}

Note that all nonzero summands in the above sum have the same power of $\xi$
modulo 1 because for $l = P\left(1 + \sum_j \frac{\epsilon_j a_j}{p_j}\right)
\bmod 2P$ we have
\begin{equation}
    \frac{l^2}{4P} = \frac{P}{4} \left(1 + \sum_j \frac{\epsilon_j
    a_j}{p_j}\right)^2 = \frac{P}{4}\left(1 + \sum_j \frac{a_j}{p_j}\right)^2 =
    -\CS[\vec a] \mod 1, 
\end{equation}
where $\CS[\vec a]$ is the Chern--Simons action of the flat connection on
$\Sigma(p_1, p_2, p_3)$ that is denoted by rotation numbers $(a_1, a_2, a_3)$.

Next, we want to use the $2P$-periodicity of $\phi^{\vec a}_{\vec p}$ to write
the sum over $l$ above as a sum over $L = 0, \ldots, r - 1$ and $\epsilon \in
\{\pm 1\}^3$. To do so, we make the replacements $l \to 2PL + P + P\sum_j
\frac{\epsilon_j a_j}{p_j}$ and $\phi^{\vec a}_{\vec p}(l) \to -\epsilon_1
\epsilon_2 \epsilon_3$, and observe that the sums
\begin{equation}
    \frac{1}{4Pr} \sum_{l = 0}^{2Pr - 1} \phi^{\vec a}_{\vec p}(l) l
    \xi^{l^2/4P} 
\end{equation}
and
\begin{equation}
    - \frac{1}{4Pr} \sum_{L = 0}^{r - 1} \sum_{\epsilon \in \{\pm 1\}^3}
    \epsilon_1 \epsilon_2 \epsilon_3 \left(2PL + P + P\sum_{j=1}^3
    \frac{\epsilon_j a_j}{p_j}\right) \xi^{\frac{P}{4}\left(1 + \sum_j
    \frac{\epsilon_j a_j}{p_j}\right)^2},
\end{equation}
though not quite equal, differ by a polynomial in $\xi$ (up to an overall
fractional power) with integer coefficients. So, to prove the proposition, it
will suffice to show that the second sum over $L = 1, \ldots, r - 1$ and $\epsilon
\in \{\pm 1\}^3$ is an element of $\xi^{-\CS[\vec a]} \Z[\xi]$.

It will be convenient to get rid of this fractional part of the exponent, and
we will do so by multiplying $\xi^{-l_0^2/4P}$ with $l_0 = P\left(1 - \sum_j
\frac{a_j}{p_j}\right)$. Define the quadratic function $F_\epsilon$ by
\begin{equation}
    \frac{l^2 - l_0^2}{4P} = P\left(L + \sum_{j = 1}^3 \frac{\epsilon_j +
    1}{2}\frac{a_j}{p_j}\right) \left(L + 1 + \sum_{j = 1}^3 \frac{\epsilon_j -
    1}{2}\frac{a_j}{p_j}\right) \eqqcolon F_\epsilon(L),
\end{equation}
and write the sum of interest, after removing the fraction exponent, as
\begin{equation}
    \frac{1}{4Pr} \sum_{L = 0}^{r - 1} \sum_{\epsilon \in \{\pm 1\}^3}
    \epsilon_1 \epsilon_2 \epsilon_3 \left(2PL + P + P\sum_{j = 1}^3
    \frac{\epsilon_j a_j}{p_j}\right) \xi^{F_\epsilon(L)}.
\end{equation}

We will deal with each term in the parentheses above one by one. We start with
the second term.

\begin{lemma}
    \label{lem:prop-2-term-2}
    With $F_\epsilon$ as defined above and $\xi$ an $r$-th root of unity, we have
    \begin{equation}
        \sum_{\epsilon \in \{\pm 1\}^3} \epsilon_1 \epsilon_2 \epsilon_3
        \sum_{L = 0}^{r - 1} \xi^{F_\epsilon(L)} = 0.
    \end{equation}
\end{lemma}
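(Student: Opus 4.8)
The plan is to exhibit a fixed-point-free, sign-reversing involution of the index set $\{\pm 1\}^3$ under which the inner sum $\sum_{L=0}^{r-1}\xi^{F_\epsilon(L)}$ is left invariant; since the weight $\epsilon_1\epsilon_2\epsilon_3$ changes sign, the eight terms then cancel in pairs. The involution is the global sign flip $\epsilon\mapsto -\epsilon$, accompanied on the inner sum by the reindexing $L\mapsto -L-1$.

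Concretely, I would first record the symmetry
\begin{equation}
    F_{-\epsilon}(L)=F_\epsilon(-L-1)\qquad\text{for all }L\in\Z.
\end{equation}
Writing $F_\epsilon(L)=P\bigl(L+u_\epsilon\bigr)\bigl(L+1+v_\epsilon\bigr)$ with $u_\epsilon=\sum_j\tfrac{\epsilon_j+1}{2}\tfrac{a_j}{p_j}$ and $v_\epsilon=\sum_j\tfrac{\epsilon_j-1}{2}\tfrac{a_j}{p_j}$, one has $u_{-\epsilon}=-v_\epsilon$ and $v_{-\epsilon}=-u_\epsilon$, whence $F_\epsilon(-L-1)=P(-L-1+u_\epsilon)(-L+v_\epsilon)=P(L+1-u_\epsilon)(L-v_\epsilon)=F_{-\epsilon}(L)$. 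This step is pure bookkeeping with the partition $\{1,2,3\}=\{j:\epsilon_j=1\}\sqcup\{j:\epsilon_j=-1\}$.

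Next I would check that $\xi^{F_\epsilon(L)}$ depends only on $L\bmod r$, so that the substitution $L\mapsto -L-1$ (which permutes $\Z/r\Z$) does not change the inner sum. This follows from the fact that $F_\epsilon(L)\in\Z$ (a short computation using only $P/p_j\in\Z$; mutual coprimality of the $p_j$ is not needed) together with the identity $F_\epsilon(L+r)-F_\epsilon(L)=Pr(2L+1+r)+r\sum_j\epsilon_j a_jP/p_j\in r\Z$ and $\xi^r=1$. Combining the two steps gives
\begin{equation}
    \sum_{L=0}^{r-1}\xi^{F_{-\epsilon}(L)}=\sum_{L=0}^{r-1}\xi^{F_\epsilon(-L-1)}=\sum_{L=0}^{r-1}\xi^{F_\epsilon(L)}.
\end{equation}

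Finally, grouping the eight summands of $\sum_{\epsilon}\epsilon_1\epsilon_2\epsilon_3\sum_{L}\xi^{F_\epsilon(L)}$ into the four pairs $\{\epsilon,-\epsilon\}$ and using $(-\epsilon_1)(-\epsilon_2)(-\epsilon_3)=-\epsilon_1\epsilon_2\epsilon_3$ together with the identity just displayed, every pair contributes $0$, proving the lemma. The only steps requiring care are the two verifications above — the identity $F_{-\epsilon}(L)=F_\epsilon(-L-1)$ and the $r$-periodicity of $\xi^{F_\epsilon(L)}$ — and neither presents a genuine difficulty once $F_\epsilon$ is kept in the factored form above; in particular there is no serious obstacle here.
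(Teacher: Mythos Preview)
Your proof is correct and follows essentially the same approach as the paper's: both hinge on the identity $F_{-\epsilon}(L)=F_\epsilon(-L-1)$ together with the $r$-periodicity of $\xi^{F_\epsilon(L)}$, so that the involution $\epsilon\mapsto-\epsilon$ pairs terms with opposite sign and equal inner sums. You are a bit more explicit than the paper in verifying that $F_\epsilon$ is integer-valued and that $F_\epsilon(L+r)\equiv F_\epsilon(L)\bmod r$, but the argument is the same.
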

\begin{proof}
    We use the fact that $F_\epsilon$ is a well-defined quadratic form on
    $\Z/r\Z$ to write the above sum as
    \begin{equation}
        \sum_{\epsilon \in \{\pm 1\}^3} \epsilon_1 \epsilon_2 \epsilon_3
        \sum_{L \in \Z/r\Z} \xi^{F_\epsilon(L)} = 
        \sum_{\epsilon \in E} \epsilon_1\epsilon_2\epsilon_3 \sum_{L \in
        \Z/r\Z} \xi^{F_\epsilon(L)}
        - \sum_{\epsilon \in E} \epsilon_1\epsilon_2\epsilon_3 \sum_{L \in
        \Z/r\Z} \xi^{F_{-\epsilon}(L)},
    \end{equation}
    where $E = \{(1, 1, 1), (1, 1, -1), (1, -1, 1), (-1, 1, 1)\}$, and conclude
    that the right-hand side is zero because $F_{-\epsilon}(L) = F_\epsilon(-L
    - 1) \bmod r$.
\end{proof}

To deal with the other terms, we need to understand how the sums $\sum_{L \in
\Z/r\Z} \xi^{F_\epsilon(L)}$ for different $\epsilon$'s are related to each
other.
A short calculation gives the coefficients of the quadratic polynomial
$F_\epsilon(L) = PL^2 + A_\epsilon L + B_\epsilon$,
\begin{equation}
    A_\epsilon = P\left(1 + \sum_{j = 1}^3 \frac{\epsilon_j a_j}{p_j}\right)
    \ \text{and}\ 
    B_\epsilon = P \sum_{j = 1}^3 \frac{\epsilon_j + 1}{2} \frac{a_j}{p_j} + P
    \sum_{i, j} \frac{\epsilon_i + 1}{2} \frac{\epsilon_j - 1}{2} \frac{a_i
    a_j}{p_i p_j}.
\end{equation}
Like in Appendix~\ref{app:gauss-sums}, let $g = \gcd(r, P)$ and define $r' =
r/g$, $P' = P/g$, and let $g_j = \gcd(r, p_j)$ and define $p'_j = p_j/g_j$ for
$j = 1, 2, 3$.
Instead of summing over $L = 0, \ldots, r - 1$, write
$L = r' k + l$ and sum over $k = 0, \ldots, g - 1$ and $l = 0, \ldots, r' - 1$
to write
\begin{equation}
    \sum_{L \in \Z/r\Z} \xi^{F_\epsilon(L)} = \sum_{k = 0}^{g - 1} \xi^{r'
    A_\epsilon k}\sum_{l = 0}^{r' - 1} \xi^{F_\epsilon(l)} = g \delta_{g \mid
    A_\epsilon} \sum_{l \in \Z/r' \Z} \xi'^{F'_\epsilon(l)},
\end{equation}
where $\xi' = \xi^g$ as before and $F'_\epsilon = F_\epsilon / g$. To see why
the last sum over $l$ can be written as a sum on $\Z/r'\Z$, note that the
right-hand side is zero unless $g$ divides $A_\epsilon$, and from the
expressions of $A_\epsilon$ and $B_\epsilon$ above and using the fact that
$p_j$'s are mutually coprime, one can deduce that $g \mid A_\epsilon$ implies
$g \mid B_\epsilon$.

Indeed, it is true that $g \mid A_\epsilon$ if and only if $g_j \mid a_j$ for
every $j = 1, 2, 3$. If this is the case, common factors in $F'_\epsilon$ can
be canceled and
\begin{equation}
    F'_\epsilon(l) = P'\left(l + \sum_{j = 1}^3 \frac{\epsilon_j + 1}{2}
    \frac{a'_j}{p'_j}\right)\left(l + 1 + \sum_{j = 1}^3 \frac{\epsilon_j - 1}{2}
    \frac{a'_j}{p'_j}\right),
\end{equation}
where $a'_j = a_j/g_j$ for $j = 1, 2, 3$. We have the following relations
(modulo $r'$) between $F'_\epsilon$ for different $\epsilon$'s,
\begin{align}
    \label{eqn:Fepsilon-rels-1}
    F'_{-\epsilon}(l) & = F'_{\epsilon}(-l - 1), \\
    F'_{-++}(l) & = F'_{+++}(l - a'_1 p^{\prime *}_1), \\
    F'_{+-+}(l) & = F'_{+++}(l - a'_2 p^{\prime *}_2), \\
    \label{eqn:Fepsilon-rels-2}
    F'_{++-}(l) & = F'_{+++}(l - a'_3 p^{\prime *}_3),
\end{align}
which let us relate all $F'_\epsilon$'s to each other by an appropriate shift of
the argument. In particular,
\begin{equation}
    \label{eqn:Fepsilon-sum-equal}
    \sum_{l \in \Z/r'\Z} \xi'^{F'_{\epsilon}(l)}
    = \sum_{l \in \Z/r'\Z} \xi'^{F'_{\varepsilon}(l)}
\end{equation}
for any $\epsilon, \varepsilon \in \{-1, 1\}^3$.

With these properties of the quadratic form $F_\epsilon$ understood, the
following lemma---which implies that the third term is zero---can be proved.

\begin{lemma}
    \label{lem:prop-2-term-3}
    With $F_\epsilon$ as defined above and $\xi$ an $r$-th root of unity, we
    have
    \begin{equation}
        \sum_{\epsilon \in \{\pm 1\}^3} \epsilon_1 \epsilon_2 \epsilon_3
        \sum_{L = 0}^{r - 1} \epsilon_j \xi^{F_\epsilon(L)} = 0,
        \quad\text{for every $j = 1, 2, 3$}.
    \end{equation}
\end{lemma}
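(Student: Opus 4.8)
The plan is to notice that, after the reductions already performed, the inner sum $\sum_{L=0}^{r-1}\xi^{F_\epsilon(L)}$ is actually \emph{independent of the signs $\epsilon$}, so that the alternating sum over $\epsilon$ vanishes for elementary sign reasons, exactly as in Lemma~\ref{lem:prop-2-term-2}. First I would pull the factor $\epsilon_j$ out of the $L$-sum — it does not depend on $L$ — and use that $F_\epsilon$ descends to a well-defined quadratic function on $\Z/r\Z$ to rewrite the left-hand side as
\[
\sum_{\epsilon\in\{\pm1\}^3}\epsilon_1\epsilon_2\epsilon_3\,\epsilon_j\sum_{L\in\Z/r\Z}\xi^{F_\epsilon(L)}.
\]

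Next I would invoke the identity $\sum_{L\in\Z/r\Z}\xi^{F_\epsilon(L)}=g\,\delta_{g\mid A_\epsilon}\sum_{l\in\Z/r'\Z}\xi'^{F'_\epsilon(l)}$ established above, together with two facts already recorded: that $g\mid A_\epsilon$ holds if and only if $g_j\mid a_j$ for every $j$, a condition that does not involve $\epsilon$ at all; and relation \eqref{eqn:Fepsilon-sum-equal}, which says that $\sum_{l\in\Z/r'\Z}\xi'^{F'_\epsilon(l)}$ takes one and the same value for all $\epsilon\in\{\pm1\}^3$ (in the case $g_j\mid a_j$ for all $j$, where $F'_\epsilon$ is defined). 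Putting these together, the quantity $T\coloneqq\sum_{L\in\Z/r\Z}\xi^{F_\epsilon(L)}$ is a single complex number independent of $\epsilon$: if some $g_j\nmid a_j$ then $T=0$, and otherwise $T=g\sum_{l\in\Z/r'\Z}\xi'^{F'_{(1,1,1)}(l)}$.

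Finally I would factor $T$ out and compute the leftover sign sum,
\[
\sum_{\epsilon\in\{\pm1\}^3}\epsilon_1\epsilon_2\epsilon_3\,\epsilon_j\sum_{L\in\Z/r\Z}\xi^{F_\epsilon(L)}
= T\sum_{\epsilon\in\{\pm1\}^3}\epsilon_1\epsilon_2\epsilon_3\,\epsilon_j
= T\Bigl(\sum_{\epsilon_j=\pm1}\epsilon_j^2\Bigr)\prod_{k\neq j}\Bigl(\sum_{\epsilon_k=\pm1}\epsilon_k\Bigr)=T\cdot 2\cdot 0 = 0,
\]
which is precisely the claim. The only point that requires care is the bookkeeping around the non-coprime reduction $r=gr'$, $P=gP'$, $p_j=g_jp'_j$: one must check that $\delta_{g\mid A_\epsilon}$ really carries no $\epsilon$-dependence and that the argument shifts in \eqref{eqn:Fepsilon-rels-1}--\eqref{eqn:Fepsilon-rels-2} are genuine automorphisms of $\Z/r'\Z$ (which uses $\gcd(p'_j,r')=1$ and $r$ odd). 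Beyond that verification there is no real obstacle, since the collapse is structurally the same as in Lemma~\ref{lem:prop-2-term-2}, with the extra factor $\epsilon_j$ merely changing which of the three one-variable sign sums survives.
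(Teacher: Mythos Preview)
Your proof is correct and follows essentially the same route as the paper's: both reduce the inner sum via $\sum_{L}\xi^{F_\epsilon(L)}=g\,\delta_{g\mid A_\epsilon}\sum_{l}\xi'^{F'_\epsilon(l)}$, observe that this quantity is $\epsilon$-independent (using \eqref{eqn:Fepsilon-sum-equal} and the $\epsilon$-free characterization of $g\mid A_\epsilon$), and then conclude from the vanishing of the residual sign sum. The paper merely specializes to $j=1$ and writes the sign sum as $\sum_\epsilon \epsilon_2\epsilon_3=0$, whereas you handle all $j$ at once via $\sum_\epsilon \epsilon_1\epsilon_2\epsilon_3\epsilon_j=0$; the arguments are otherwise identical.
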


\begin{proof}
    Here we will show what happens for $j = 1$ (other cases are completely
    analogous). As in the above discussion, we decompose the sum over $L$ to
    write
    \begin{equation}
        \sum_{\epsilon \in \{\pm\}^3} \epsilon_1 \epsilon_2 \epsilon_3 \sum_{L
        = 0}^{r - 1} \epsilon_1 \xi^{F_\epsilon(L)} = g \delta_{g_j \mid a_j,\,
        j = 1, 2, 3} \sum_{\epsilon \in \{\pm 1\}^3} \epsilon_2 \epsilon_3
        \sum_{l \in \Z/r'\Z} \xi'^{F'_\epsilon(l)}.
    \end{equation}
    Next, using the relations between $F'_\epsilon$'s and the fact that
    $\sum_{\epsilon \in \{\pm 1\}^3} \epsilon_2 \epsilon_3 = 0$, we have
    \begin{equation}
        \sum_{\epsilon \in \{\pm 1\}^3} \epsilon_2 \epsilon_3
        \sum_{l \in \Z/r'\Z} \xi'^{F'_\epsilon(l)} = \sum_{\epsilon \in
        \{\pm 1\}^3} \epsilon_2 \epsilon_3 \sum_{l \in \Z/r'\Z}
        \xi'^{F'_{+++}(l)} = 0.
    \end{equation}
\end{proof}

Finally, for the first term, we have the following.

\begin{lemma}
    With $F_\epsilon$ as defined above and $\xi$ a root of unity of odd order
    $r$, the polynomial
    \begin{equation}
        \frac{1}{2r} \sum_{\epsilon \in \{\pm 1\}^3} \epsilon_1 \epsilon_2
        \epsilon_3 \sum_{L = 0}^{r - 1} L \xi^{F_\epsilon(L)},
    \end{equation}
    in $\xi$ has integer coefficients.
\end{lemma}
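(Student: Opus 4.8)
The plan is to perform, on the one remaining ``first term'', the same $\gcd$-reduction used in the proofs of Lemmas~\ref{lem:prop-2-term-2} and \ref{lem:prop-2-term-3}, and then split into the two cases $g\mid A_\epsilon$ and $g\nmid A_\epsilon$. Writing $g=\gcd(r,P)$, $r'=r/g$, $P'=P/g$, $g_j=\gcd(r,p_j)$, $p'_j=p_j/g_j$, I would put $L=r'k+l$ with $0\le k<g$, $0\le l<r'$, and use $F_\epsilon(r'k+l)\equiv F_\epsilon(l)+A_\epsilon r'k\pmod r$ to get
\[
  \sum_{L=0}^{r-1} L\,\xi^{F_\epsilon(L)}
  = r'\Bigl(\sum_{k=0}^{g-1}k\,\zeta_\epsilon^{k}\Bigr)\sum_{l=0}^{r'-1}\xi^{F_\epsilon(l)}
  +\Bigl(\sum_{k=0}^{g-1}\zeta_\epsilon^{k}\Bigr)\sum_{l=0}^{r'-1}l\,\xi^{F_\epsilon(l)},
  \qquad \zeta_\epsilon\coloneqq\xi^{A_\epsilon r'}.
\]
As already noted in the discussion preceding the lemma, $A_\epsilon\equiv\epsilon_j(Pa_j/p_j)\bmod g_j$, so $g_j\mid A_\epsilon\iff g_j\mid a_j$ regardless of $\epsilon$, and hence $g\mid A_\epsilon$ holds for all $\epsilon$ or for none.

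In the case $g\mid A_\epsilon$ (so $\zeta_\epsilon=1$), the second summand equals $g\sum_l l\,\xi'^{F'_\epsilon(l)}$ with $\xi'=\xi^g$, and the first is a multiple of $r$ times $\sum_l\xi'^{F'_\epsilon(l)}$; since this last Gauss-type sum is the same for every $\epsilon$ by (\ref{eqn:Fepsilon-sum-equal}), it is annihilated by $\sum_\epsilon\epsilon_1\epsilon_2\epsilon_3=0$. Thus the quantity collapses to $\tfrac{1}{2r'}\sum_\epsilon\epsilon_1\epsilon_2\epsilon_3\sum_{l=0}^{r'-1}l\,\xi'^{F'_\epsilon(l)}$, i.e.\ the same expression with $(r,\xi,F_\epsilon,P)$ replaced by $(r',\xi',F'_\epsilon,P')$, where now $\gcd(P',r')=\gcd(p'_j,r')=1$. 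So it suffices to treat $\gcd(P,r)=1$. There I would complete the square, $F_{+++}(L)=P(L+c_0)^2+D_0\pmod r$ with $c_0\equiv\tfrac12\bigl(1+\sum_j a_j p_j^{*}\bigr)\pmod r$, and use (\ref{eqn:Fepsilon-rels-1})--(\ref{eqn:Fepsilon-rels-2}) to write every $F_\epsilon$ as $F_{+++}$ composed with an integer shift and possibly the reflection $l\mapsto -l-1$. Re-indexing each $\sum_L L\,\xi^{F_\epsilon(L)}$ accordingly produces: (i) a ``bulk'' term $\sum_L L\,\xi^{F_{+++}(L)}$ common to all $\epsilon$, killed by $\sum_\epsilon\epsilon_1\epsilon_2\epsilon_3=0$; (ii) a multiple of the Gauss sum $G=\sum_{n\in\Z/r\Z}\xi^{Pn^2}$ whose coefficient works out to $2c_0-1-\sum_j a_j p_j^{*}$, which is $\equiv 0\bmod r$ precisely by the formula for $c_0$, so that the prefactor $\tfrac1{2r}$ leaves $\tfrac12\cdot(\text{an integer})\cdot G\in\Z[\xi]$; and (iii) boundary partial sums $\sum_{n<c}\xi^{Pn^2}\in\Z[\xi]$. (Here I also use $\sum_n n\,\xi^{Pn^2}=\tfrac{r}{2}(G-1)$, obtained by pairing $n\leftrightarrow r-n$.)

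In the case $g\nmid A_\epsilon$ (so $\zeta_\epsilon\neq1$ is a root of unity of order $d=g/\gcd(g,A_\epsilon)$ dividing $g\mid r$), the second summand vanishes ($\sum_k\zeta_\epsilon^{k}=0$) and $\sum_k k\,\zeta_\epsilon^{k}=g/(\zeta_\epsilon-1)$, which is already an algebraic integer in $\Z[\xi]$ because $(1-\zeta_\epsilon)$ divides $g$ in $\Z[\zeta_d]\subseteq\Z[\xi]$. Hence $\sum_L L\,\xi^{F_\epsilon(L)}=r'\cdot\tfrac{g}{\zeta_\epsilon-1}\,\Sigma_\epsilon$ with $\Sigma_\epsilon\coloneqq\sum_{l=0}^{r'-1}\xi^{F_\epsilon(l)}$, and dividing by $2r$ leaves the prefactor $\tfrac1{2g}$. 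I would absorb this by combining the involution $\epsilon\mapsto-\epsilon$ (which flips $\epsilon_1\epsilon_2\epsilon_3$ and gives $\zeta_{-\epsilon}=\zeta_\epsilon^{-1}$ and $\Sigma_{-\epsilon}=\zeta_\epsilon^{-1}\Sigma_\epsilon$, using $F_{-\epsilon}(l)=F_\epsilon(-l-1)$) with the observation that for any index $i$ with $g_i\mid a_i$ the shift relation (\ref{eqn:Fepsilon-rels-2}) forces $\Sigma_{\sigma_i\epsilon}-\Sigma_\epsilon\in(\zeta_\epsilon-1)\Z[\xi]$; grouping the eight $\epsilon$ by the value of $A_\epsilon\bmod g$ — which depends only on the signs $\epsilon_j$ with $g_j\nmid a_j$ — then makes the sum over the remaining signs divisible by $\zeta_\epsilon-1$, and a parity count on the grouped terms supplies the last factor of $2$.

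The hard part will be this divisibility bookkeeping in the case $g\nmid A_\epsilon$: there is no longer a square-completion available to normalize the quadratic forms, the sums $\Sigma_\epsilon$ are genuinely incomplete (not quadratic Gauss sums), and when all three $g_j\nmid a_j$ one must extract the full factor $1/(2g)$ purely from cancellations among the eight sign choices, tracking divisibility by $\zeta_\epsilon-1$ and by $2$ at the same time.
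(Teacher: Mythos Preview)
Your overall strategy is sound, and the case $g\mid A_\epsilon$ (i.e.\ $g_j\mid a_j$ for all $j$) is handled correctly; after your reduction to $\gcd(P',r')=1$ it becomes essentially the paper's ``second term'' argument (square completion, $\epsilon$-independence of the bulk). The genuine gap is precisely the sub-case you flag as hard: when all three $g_j\nmid a_j$ there are no shift relations available, and you give no mechanism to absorb the remaining denominator $1/(\zeta_\epsilon-1)$. The missing observation is short: the order of $\zeta_\epsilon$ equals $\prod_j g_j/\gcd(g_j,a_j)$, a product of pairwise coprime factors which is $>1$ exactly for those $j$ with $g_j\nmid a_j$. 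As soon as at least \emph{two} such factors are $>1$, this order is not a prime power, so $\Phi_d(1)=1$ and $1-\zeta_\epsilon$ is a \emph{unit} in $\Z[\xi]$. Thus for $\lvert\{j:g_j\nmid a_j\}\rvert\ge 2$ each term $\Sigma_\epsilon/(\zeta_\epsilon-1)$ already lies in $\Z[\xi]$ and no cancellation is needed; only the single-bad-index case requires the shift/divisibility argument you sketched (which is correct).

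The paper organizes things differently. It first removes the factor $1/2$ by restricting to a half-set $E\subset\{\pm1\}^3$, then decomposes $L=r'k+l$ and treats the $k$-weighted and $l$-weighted pieces separately, \emph{without} splitting on whether $g\mid A_\epsilon$. The device you do not use is a CRT decomposition of the $k$-sum, $k\equiv\sum_j(g/g_j)k_j\bmod g$, which turns $\tfrac{1}{g}\sum_k k\,\zeta_\epsilon^{\,k}$ into three pieces, the $j$-th carrying factors $\delta_{g_i\mid a_i}$ for the two $i\neq j$. Those Kronecker deltas kill the ``two or three bad indices'' sub-cases automatically, and the single-bad-index case reduces to exactly the divisibility of $\sum_{\epsilon\in E}\epsilon_2\epsilon_3\,\Sigma_\epsilon$ by $1-\xi^{ra_1/g_1}$ that your shift argument also establishes. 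So the two routes agree on the one substantive divisibility step; the CRT trick simply replaces your case analysis and closes your gap in a single move.
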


\begin{proof}

First, we get rid of the factor of half by noting that
\begin{align}
    \frac{1}{2} \sum_{\epsilon \in \{\pm 1\}^3} \epsilon_1 \epsilon_2 \epsilon_3
    \sum_{L = 0}^{r - 1} L \xi^{F_{\epsilon}(L)}
    & = \frac{1}{2} \sum_{\epsilon \in E} \epsilon_1 \epsilon_2 \epsilon_3
    \sum_{L = 0}^{r - 1} L \xi^{F_{\epsilon}(L)}
    - \frac{1}{2} \sum_{\epsilon \in E} \epsilon_1 \epsilon_2 \epsilon_3
    \sum_{L = 0}^{r - 1} L \xi^{F_{-\epsilon}(L)} \\
    & = \sum_{\epsilon \in E} \epsilon_1 \epsilon_2 \epsilon_3 \sum_{L = 0}^{r
    - 1} L \xi^{F_\epsilon(L)} - \frac{r - 1}{2} \sum_{\epsilon \in E}
    \epsilon_1 \epsilon_2 \epsilon_3 \sum_{L \in \Z/r\Z} \xi^{F_\epsilon(L)} \\
    & = \sum_{\epsilon \in E} \epsilon_1 \epsilon_2 \epsilon_3 \sum_{L = 0}^{r
    - 1} L \xi^{F_\epsilon(L)},
\end{align}
where $E = \{(1, 1, 1), (1, 1, -1), (1, -1, -1), (1, -1, 1)\}$. We can show
\begin{equation}
    \sum_{\epsilon \in E} \epsilon_1 \epsilon_2 \epsilon_3
    \sum_{L \in \Z/r\Z} \xi^{F_\epsilon(L)} = g \delta_{g_j \mid a_j,\, j = 1,
    2, 3} \sum_{\epsilon \in E} \epsilon_1 \epsilon_2 \epsilon_3 \sum_{l \in
    \Z/r'\Z} \xi'^{F'_{+++}(l)} = 0,
\end{equation}
by using relations among different $F'_\epsilon$'s and $\sum_{\epsilon \in
E} \epsilon_1 \epsilon_2 \epsilon_3 = 0$. Note that the choice of the index set
$E$ above is not unique, and we can pick any $E \subset \{-1, 1\}^3$ such that
$E \cap -E = \varnothing$ and $E \cup -E = \{-1, 1\}^3$.

Next, we decompose the sum over $L$ as before
\begin{align}
    \frac{1}{r} \sum_{L = 0}^{r - 1} L \xi^{F_\epsilon(L)} & = \frac{1}{r} \sum_{k = 0}^{g
    - 1} \sum_{l = 0}^{r' - 1} (r'k + l) \xi^{F_\epsilon(r'k + l)} \\
    & = \frac{1}{g} \sum_{k = 0}^{g - 1} k
    \xi^{r'A_\epsilon k} \sum_{l = 0}^{r' - 1} \xi^{F_\epsilon(l)} + 
    \delta_{g\mid A_\epsilon} \frac{1}{r'} \sum_{l = 0}^{r' - 1} l \xi'^{F'_\epsilon(l)}.
    \label{eqn:prop-2-term-1-decomposed}
\end{align}

Since $g = g_1 g_2 g_3$ and $g_j$'s are mutually coprime, we can decompose the
sum over $k$ in the first term in the following symmetric way, $k \to
g \sum_{j=1}^3 \frac{k_j}{g_j}$, where $k_j$'s all run from $0$ to $g_j - 1$,
\begin{equation}
    \frac{1}{g} \sum_{k = 0}^{g - 1} k \xi^{r' A_\epsilon k}
    = \sum_{k_1 = 0}^{g_1 - 1} \sum_{k_2 = 0}^{g_2 - 1} \sum_{k_3 = 0}^{g_3 - 1}
    \left(\sum_{j = 1}^3 \frac{k_j}{g_j}\right) \xi^{r \frac{\epsilon_1 a_1 k_1}{g_1} +
    r \frac{\epsilon_2 a_2 k_2}{g_2} + r \frac{\epsilon_3 a_3 k_3}{g_3}}.
\end{equation}

We will work out the term $j = 1$ in detail; terms $j = 2, 3$ are
completely analogous. We have
\begin{equation}
    \frac{1}{g_1} \sum_{k_1 = 0}^{g_1 - 1} \sum_{k_2 = 0}^{g_2 - 1} \sum_{k_3 =
    0}^{g_3 - 1} k_1 \xi^{r
    \frac{\epsilon_1 a_1 k_1}{g_1} + r \frac{\epsilon_2 a_2 k_2}{g_2} + r
    \frac{\epsilon_3 a_3 k_3}{g_3}} = -g_2 g_3 \delta_{g_2 \mid a_2} \delta_{g_3 \mid
    a_3} \frac{1}{1 - \xi^{r\frac{\epsilon_1 a_1}{g_1}}},
\end{equation}
where we used the identity for the sum of roots of unity for sums over $k_2$
and $k_3$, and the identity
\begin{equation}
    \sum_{n = 0}^{h - 1} n \chi^n = -\frac{h}{1 - \chi},
\end{equation}
where $\chi$ is an $h$-th root of unity, for the sum over $k_1$. With the sums
over $\epsilon$ and $l$, this term becomes
\begin{equation}
    \label{eqn:prop-2-divisible}
    -g_2 g_3 \delta_{g_j \mid a_j,\, j = 2, 3} \frac{1}{1 -
    \xi^{r\frac{a_1}{g_1}}} \sum_{\epsilon \in E} \epsilon_2 \epsilon_3 \sum_{l
    = 0}^{r' - 1} \xi^{F_\epsilon(l)},
\end{equation}
where we have chosen $E \subset \{-1, 1\}^3$ such that $\epsilon_1 = 1$ for
every $\epsilon \in E$. To complete the proof, it suffices to show that
$\sum_{\epsilon \in E} \sum_{l = 0}^{r' - 1} \xi^{F_\epsilon(l)}$ is divisible
by $1 - \xi^{r a_1 / g_1}$ in $\Z[\xi]$.
Since the term above is nonzero even when $g_1$ does not divide $a_1$, we
cannot cancel factors in $F_\epsilon$, and $F'_\epsilon$ might not be
well-defined on $\Z/r'\Z$. But we still have 
\begin{equation}
    \label{eqn:Fepsilon-r'-shift}
    F_\epsilon(l + r'k) = F_\epsilon(l) + r'A_\epsilon k,
\end{equation}
and the relations,
\begin{align}
    \label{eqn:Fepsilon-r'-rels-1}
    F_{++-}(l) & = F_{+++}(l - a_3'p_3'^*), \\
    \label{eqn:Fepsilon-r'-rels-2}
    F_{+--}(l) & = F_{+-+}(l - a_3'p_3'^*)
\end{align}
modulo $r$, where $0 < p_3'^* < r$ is an integer such that $p_3' p_3'^* = 1 \bmod
r$.

To show that the sum in (\ref{eqn:prop-2-divisible}) is divisible by $1 -
\xi^{ra_1/g_1}$, we use (\ref{eqn:Fepsilon-r'-rels-1}) for
\begin{equation}
    \sum_{l = 0}^{r' - 1} \xi^{F_{+++}(l)}
    - \sum_{l = 0}^{r' - 1} \xi^{F_{++-}(l)} =
    \sum_{l = 0}^{r' - 1} \xi^{F_{+++}(l)}
    - \sum_{l = -a_3'p_3'^*}^{-a_3'p_3'^* + r' - 1} \xi^{F_{+++}(l)},
\end{equation}
and note that $1 - \xi^{r'A_{+++}}$ is a factor of the above because for every
$l' \in \{-a_3'p_3'^*, -a_3'p_3'^* + 1, \ldots, -a_3'p_3'^* + r' - 1\}$, there
exists an $l \in \{0, 1, \ldots, r' - 1\}$ such that $l' = l + r'k$ for some
integer $k$, and due to (\ref{eqn:Fepsilon-r'-shift}) $F_{+++}(l') -
F_{+++}(l) = r'A_{+++} k$. Similarly, we can use (\ref{eqn:Fepsilon-r'-rels-2})
and (\ref{eqn:Fepsilon-r'-shift}) to see that $1 - \xi^{r'A_{+++}}$ is also a
factor of $\sum_{l = 0}^{r' - 1} \xi^{F_{+--}(l)} - \sum_{l = 0}^{r' - 1}
\xi^{F_{+-+}(l)}$, so that $1 - \xi^{r'A_{+++}}$ is a factor of the sum in
(\ref{eqn:prop-2-divisible}). But since
\begin{equation}
    \frac{1 - \xi^{r'A_{+++}}}{1 - \xi^{r\frac{a_1}{g_1}}} = 
    \frac{1 - \xi^{r \frac{a_1 p_2' p_3'}{g_1}}}{1 - \xi^{r\frac{a_1}{g_1}}} =
    1 + \xi^{r\frac{a_1}{g_1}} + \cdots + \xi^{r\frac{a_1 p_2'p_3'}{g_1}} 
    \in \Z[\xi],
\end{equation}
the term (\ref{eqn:prop-2-divisible}) is an element of $\Z[\xi]$.

In the second term (\ref{eqn:prop-2-term-1-decomposed}), we can assume $g_j \mid
a_j$ for all $j = 1, 2, 3$ because it vanishes unless $g$ divides $A_\epsilon$,
and therefore $F'_\epsilon$ is a well-defined quadratic form on $\Z/r'\Z$. If $2$
is invertible in $\Z/r'\Z$, then we can write this quadratic polynomial in the
following diagonal form $F'_\epsilon(l) = P'(l - D_\epsilon)^2 + C_\epsilon$
modulo $r'$, for some integers $0 \leq C_\epsilon, D_\epsilon < r'$. Using the
relations (\ref{eqn:Fepsilon-rels-1}--\ref{eqn:Fepsilon-rels-2}) among different
$F'_\epsilon$'s, we can deduce $C_\epsilon = C_\varepsilon$ for any $\epsilon,
\varepsilon \in \{-1, 1\}^3$, and the following relations among $D_\epsilon$'s,
\begin{align}
    \label{eqn:Depsilon-rels-1}
    D_{-\epsilon} & = 1 - D_{\epsilon}, \\
    D_{-++} & = D_{+++} + a'_1 p'^*_1, \\
    D_{+-+} & = D_{+++} + a'_2 p'^*_2, \\
    \label{eqn:Depsilon-rels-2}
    D_{++-} & = D_{+++} + a'_3 p'^*_3.
\end{align}

With the diagonal form of $F'_\epsilon$ as above, we write
\begin{equation}
    \frac{1}{r'}\sum_{l = 0}^{r' - 1} l \xi'^{F'_\epsilon(l)}
    = \frac{1}{r'}\sum_{l = 0}^{r' - 1} l \xi'^{P'(l - D_\epsilon)^2 + C_\epsilon}
    = \frac{1}{r'}\sum_{l = -D_\epsilon}^{-D_\epsilon + r' - 1} l \xi'^{P'l^2 +
    C_\epsilon} + \frac{D_\epsilon}{r'} \sum_{l = 0}^{r' - 1}
    \xi'^{F'_\epsilon(l)}.
\end{equation}
But we can show
\begin{equation}
    \sum_{\epsilon \in E} \epsilon_1 \epsilon_2 \epsilon_3 D_\epsilon \sum_{l =
    0}^{r' - 1} \xi'^{F'_\epsilon(l)} = 
    \sum_{\epsilon \in E} \epsilon_1 \epsilon_2 \epsilon_3 D_\epsilon \sum_{l =
    0}^{r' - 1} \xi'^{F'_{+++}(l)} = 0,
\end{equation}
using (\ref{eqn:Fepsilon-sum-equal}), and $\sum_{\epsilon \in E} \epsilon_1
\epsilon_2 \epsilon_3 D_\epsilon = 0$, which follows from the relations
(\ref{eqn:Depsilon-rels-1}--\ref{eqn:Depsilon-rels-2}) between $D_\epsilon$'s. To
deal with the other term, we use
\begin{equation}
    \sum_{l = -r + 1}^{r - 1} l \xi'^{P'l^2 + C_\epsilon} = 0,
\end{equation}
to write
\begin{align}
    - \sum_{l = -D_\epsilon}^{-D_\epsilon + r' - 1} l \xi'^{P'l^2 + C_\epsilon} & =
    \sum_{l = -r' + 1}^{-D_\epsilon - 1}  l \xi'^{P'l^2 + C_\epsilon}
    + \sum_{l = -D_\epsilon + r'}^{r' - 1} l \xi'^{P'l^2 + C_\epsilon} \\
    & = \sum_{l = 1}^{-D_\epsilon + r' - 1} (l - r') \xi'^{P'(l - r')^2 +
    C_\epsilon} + \sum_{l = -D_\epsilon + r'}^{r' - 1} l \xi'^{P'l^2 + C_\epsilon}
    \\
    & = \sum_{l = 1}^{r' - 1} l \xi'^{P'l^2 + C_\epsilon} - \sum_{l =
    1}^{-D_\epsilon + r' - 1} r' \xi'^{P'l^2 + C_\epsilon},
\end{align}
so that
\begin{equation}
    \frac{1}{r'} \sum_{l = -D_\epsilon}^{-D_\epsilon + r - 1} l \xi'^{P'l^2 +
    C_\epsilon} = \sum_{l = 1}^{-D_\epsilon + r - 1} \xi^{Pl^2 + gC_{+++}} -
    \frac{1}{r'} \sum_{l = 0}^{r' - 1} l\xi'^{P'l^2 + C_{+++}},
\end{equation} 
where we have used that $C_\epsilon$'s are the same for every $\epsilon$. We notice
that the first term is an element of $\Z[\xi]$ and since the second term is
independent of $\epsilon$, we have
\begin{equation}
    \sum_{\epsilon \in E} \epsilon_1 \epsilon_2 \epsilon_3 \sum_{l = 0}^{r' - 1}
    l\xi'^{P'l^2 + C_{+++}} = 0,
\end{equation}
because $\sum_{\epsilon \in E} \epsilon_1 \epsilon_2 \epsilon_3 = 0$.
\end{proof}

\section{\texorpdfstring{$\hat Z$}{Z-hat} and WRT invariants at generic roots of unity}
\label{app:Zhat}

In this section, we shall comment on the conjectural relationship between WRT
invariants at generic roots of unity and radial limits of $\hat Z$-invariants.
The following result about a higher rank version of quadratic Gauss sums will
be important in simplifying the resulting relation.

\begin{proposition}
    \label{prop:high-rk-gauss-sum}
    For any positive integer $N$, let $B : \Z^N \to \Z^N$ be a nondegenerate,
    symmetric, $\Z$-linear map, and let $r$ be an odd positive integer, coprime
    with $\det B$. Then
    \begin{equation}
        \sum_{x \in \Z^N/r\Z^N} e^{\frac{2\pi i}{r} \inner{x}{Bx}} = 
        \begin{cases}
            \displaystyle \left(\frac{\det B}{r}\right) r^{N/2} & r = 1 \bmod 4, \\
            \displaystyle \left(\frac{\det B}{r}\right) i^N r^{N/2} & r = 3 \bmod 4.
        \end{cases}
    \end{equation}
\end{proposition}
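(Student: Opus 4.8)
The plan is to reduce the $N$-dimensional sum to a product of one-dimensional quadratic Gauss sums by diagonalizing $B$, and then invoke the classical Gauss sum evaluation from Appendix~\ref{app:gauss-sums}. First I would observe that, since $r$ is odd and coprime with $\det B$, the symmetric bilinear form $\inner{x}{Bx}$ can be diagonalized over $\Z/r\Z$: by the structure theory of quadratic forms over $\Z/r\Z$ (or, working prime by prime via the Chinese remainder theorem, over each $\Z/p^k\Z$ with $p \mid r$), there is a $\Z/r\Z$-linear change of variables $x \mapsto Ux$ with $U \in GL_N(\Z/r\Z)$ such that $\inner{Ux}{B Ux} \equiv \sum_{i=1}^N d_i x_i^2 \pmod r$, where $d_1 \cdots d_N \equiv \det B \pmod r$ up to a unit square — in fact one can arrange $\prod d_i \equiv \det B$ exactly modulo $r$ since the Jacobian $\det U$ is a unit and $\det(U^t B U) = (\det U)^2 \det B$. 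The change of variables is a bijection of $\Z^N/r\Z^N$, so it leaves the sum invariant.

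Next I would factor the sum as
\begin{equation}
    \sum_{x \in \Z^N/r\Z^N} e^{\frac{2\pi i}{r}\sum_i d_i x_i^2}
    = \prod_{i=1}^N \sum_{x_i \in \Z/r\Z} e^{\frac{2\pi i d_i x_i^2}{r}}
    = \prod_{i=1}^N G(d_i, r),
\end{equation}
and apply the evaluation of $G(s,r)$ recalled in Appendix~\ref{app:gauss-sums}: since $r$ is odd and each $d_i$ is coprime with $r$, we have $G(d_i, r) = \left(\frac{d_i}{r}\right)\sqrt r$ when $r \equiv 1 \bmod 4$ and $G(d_i, r) = \left(\frac{d_i}{r}\right) i \sqrt r$ when $r \equiv 3 \bmod 4$. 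Multiplying, the product of square roots gives $r^{N/2}$, the product of the powers of $i$ gives $1$ or $i^N$ respectively, and the product of Jacobi symbols gives $\left(\frac{d_1\cdots d_N}{r}\right) = \left(\frac{\det B}{r}\right)$ by multiplicativity of the Jacobi symbol in its upper argument. This is exactly the claimed formula.

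The main obstacle is the diagonalization step: one must be careful that a symmetric $\Z$-linear $B$ need not be diagonalizable over $\Z$, only over $\Z/r\Z$, and even there the standard algorithm (completing the square) requires dividing by $2$ and by the pivots. Oddness of $r$ takes care of the factor of $2$, and coprimality of $r$ with $\det B$ ensures that at each stage of the reduction one can find a pivot entry that is a unit modulo (the relevant prime power dividing) $r$ — this is where I would argue prime-by-prime, noting that $B$ remains nondegenerate modulo each $p \mid r$. An alternative that sidesteps explicit diagonalization is to apply the Deloup--Turaev reciprocity formula (Theorem~\ref{thm:reciprocity}) with $\psi = 0$ and then evaluate the dual sum over $\Z^N/B\Z^N$, but since $r$ is coprime with $\det B$ the dual lattice sum collapses and one still needs a Gauss sum evaluation; the direct diagonalization argument is cleaner, so that is the route I would take, relegating the quadratic-form bookkeeping to a short lemma.
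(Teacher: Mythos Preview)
Your proposal is correct and takes essentially the same approach as the paper: diagonalize the quadratic form over $\Z/r\Z$ (the paper does this at the prime-power level, citing Milnor--Husemoller, and then combines via CRT with an induction on the number of prime factors of $r$), factor the resulting sum into one-dimensional quadratic Gauss sums, and apply the Appendix~\ref{app:gauss-sums} evaluation together with multiplicativity of the Jacobi symbol. The only cosmetic difference is that the paper's inductive CRT splitting requires a quick appeal to Jacobi reciprocity to track the sign, whereas your global diagonalization over $\Z/r\Z$ avoids that step.
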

\begin{proof}
    Let $r = p_1^{n_1} p_2^{n_2} \cdots p_k^{n_k}$ be the prime factorization
    of $r$ where $p_j$ are distinct odd primes and $n_j \in \Z_{>0}$ for all $j =
    1, 2, \cdots, k$. We will prove this result by induction on $k$. For $k =
    1$, we use a diagonalization result for symmetric, invertible bilinear
    forms over local rings in which $2$ is a unit
    \cite[Corollary~3.4]{milnor1973symmetric}. The matrix $B$ is invertible
    over $\Z/p_1^{n_1}\Z$ because $\det B$ is coprime with $r$, and $2$ is a unit in
    $\Z/p_1^{n_1}\Z$ because $r$ is odd. We have $SBS^T = \mathrm{diag}(b_1,
    \ldots, b_N)$ for some orthonormal matrix $S$, and the result follows from
    a short calculation
    \begin{equation}
        \sum_{x \in \Z^N/p_1^{n_1}\Z^N} e^{\frac{2\pi i}{p_1^{n_1}} \langle x, B x\rangle}
        = \prod_{j = 1}^{N} \sum_{x_j \in \Z/p_1^{n_1}\Z} e^{\frac{2\pi i
        b_j}{p_1^{n_1}}}
        = \left(\frac{\det B}{p_1^{n_1}}\right) G(1, p_1^{n_1})^N.
    \end{equation}
    For the induction step, we factor the sum over
    $\Z^N/r\Z^N$ into $\Z^N/r'\Z^N \oplus \Z^N/p_k^{n_k}\Z_N$, where $r' =
    r/p_k^{n_k}$
    \begin{align}
        \sum_{x \in \Z^N/r\Z^N} e^{\frac{2\pi i}{r} \langle x, Bx \rangle}
        & = \sum_{x \in \Z^N/r'\Z^N} e^{\frac{2\pi i}{r'} p_k^{n_k}\langle x, Bx \rangle}
        \sum_{y \in \Z^N/p^n\Z^N} e^{\frac{2\pi i}{p_k^{n_k}} r' \langle y, By
        \rangle} \\
        & = \left(\frac{\det B}{r}\right) \left[\left(\frac{p_k^{n_k}}{r'}\right)
        \left(\frac{r'}{p_k^{n_k}}\right) \right]^N G(1, r')^N G(1,
        p_k^{n_k})^N.
    \end{align}
    To see that the sign of the Gauss sum is correct, we need to use the
    reciprocity of Jacobi symbols
    \begin{equation}
        \left(\frac{r}{s}\right) \left(\frac{s}{r}\right) = (-1)^{\frac{r -
        1}{2} \frac{s - 1}{2}}.
    \end{equation}
\end{proof}

Suppose that $M$ is a 3-manifold obtained from a genus zero plumbing on a tree graph, and let $B$ be its linking matrix, which we assume to be negative definite\footnote{We expect that the argument below can be in principle generalized to the case of \textit{weakly negative-definite} plumbings. However, this is redundant, since, as was shown in \cite{harichurn2025delta}, any weakly negative-definite rational homology sphere plumbing has a negative-definite representative in its homeomorphism class.}. Let
$\xi = e^{\frac{2\pi i s}{r}}$ be a primitive $r$-th root of unity. With manipulations
similar to \cite[Appendix~A]{Gukov:2017kmk}, we get (see also
\cite{Kucharski:2019fgh,Chung:2019jgw})
\begin{align}
    \tau_M(\xi) = \lim_{q \to \xi} & \frac{1}{2\, (q^{1/2} - q^{-1/2})\, \lvert \det
    B \rvert^{1/2}\, G(r, s)^N} \nonumber \\
    & \phantom{\sum_{a \in \coker s B}} \times \sum_{a \in \coker s B} e^{-2\pi i
    \frac{r}{s} \langle a, B^{-1} a \rangle} \sum_{b \in 2\coker B + \delta}
    e^{-2\pi i \langle a, B^{-1} b \rangle} \hat Z_b(q),
\end{align}
If we let $s$ be coprime with $\det B$, it is possible to factor the sum over
$\coker s B$---which is not invariant under Kirby moves---to sums over
$\Z^N/s\Z^N$ and  $\coker B$ using the isomorphism $\Z^N/s\Z^N \oplus \coker B
\ni (x, a) \mapsto Bx + sa \in \coker sB$. The two sums above become
\begin{equation}
    \sum_{x \in \Z^N/s\Z^N} e^{-2\pi i \frac{r}{s} \langle x, B x \rangle}
    \sum_{a \in \coker B} e^{-2\pi i \frac{r}{s} s^2\langle a, B^{-1} a
    \rangle} \sum_{b \in 2\coker B + \delta} e^{-2\pi i s \langle a, B^{-1} b
    \rangle} \hat Z_b(q).
\end{equation}
We can use formulas from Appendix~\ref{app:gauss-sums} to evaluate the Gauss sums in the
denominator, and if we assume that $s$ is odd, we can use
Proposition~\ref{prop:high-rk-gauss-sum} to evaluate the sum over $x$, so that
\begin{align}
    \left(\frac{\det(-B)}{s}\right) \tau_M(\xi) = \lim_{q \to
    \xi} & \frac{1}{2\, (q^{1/2} - q^{-1/2})\, \lvert \det B \rvert^{1/2}} 
    \nonumber \\
    &
    \times \sum_{a \in \coker B} e^{-2\pi i
    \frac{r}{s} s^2 \langle a, B^{-1} a \rangle} \sum_{b \in 2\coker B + \delta}
    e^{-2\pi i s \langle a, B^{-1} b \rangle} \hat Z_b(q).
\end{align}
Using the assumption that $s$ is coprime with $\lvert\det B \rvert=\lvert
H_1(M)\rvert$ we can rewrite the formula in the form independent of the plumbing representation (cf. \cite{Costantino:2021yfd}):
\begin{align}
    \left(\frac{\lvert H_1(M)\rvert}{s}\right) \tau_M(\xi) = \lim_{q \to
    \xi} & \frac{1}{2\, (q^{1/2} - q^{-1/2})\, \lvert H_1(M) \rvert^{1/2}} \times
    \nonumber \\
    & \phantom{\sum} 
    \sum_{a \in H_1(M)} e^{-2\pi i
    \frac{r}{s}\widetilde{\mathrm{lk}(a,a)}} \sum_{\mathfrak{s}\in \mathrm{Spin}^c(M)}
    e^{-2\pi i \mathrm{lk}(a,\det(\mathfrak{s}))} \hat Z_\mathfrak{s}(q).
\end{align}
where $\widetilde{\mathrm{lk}(a,a)}=\CS[a]$ is a lift of the value of the linking pairing to $\Q$. For $s=1$ this relation reduces to the one conjectured in \cite{Gukov:2017kmk} and later proved in \cite{Murakami:2023oam}.

\bibliographystyle{JHEP}
\bibliography{main}

\providecommand{\href}[2]{#2}\begingroup\raggedright\begin{thebibliography}{10}

\bibitem{thurston1982three}
W.P.~Thurston, \emph{Three dimensional manifolds, {K}leinian groups and
  hyperbolic geometry}, {\emph{Bulletin of the American Mathematical Society}
  {\bfseries 6} (1982) 357}.

\bibitem{perelman2002entropyformularicciflow}
G.~Perelman, \emph{The entropy formula for the {R}icci flow and its geometric
  applications},  2002.

\bibitem{perelman2003ricciflowsurgerythreemanifolds}
G.~Perelman, \emph{{R}icci flow with surgery on three-manifolds},  2003.

\bibitem{perelman2003finiteextinctiontimesolutions}
G.~Perelman, \emph{Finite extinction time for the solutions to the {R}icci flow
  on certain three-manifolds},  2003.

\bibitem{RT91}
N.~Reshetikhin and V.G.~Turaev, \emph{Invariants of 3-manifolds via link
  polynomials and quantum groups}, {\emph{Inventiones Mathematicae} {\bfseries
  103} (1991) 547}.

\bibitem{turaev1992modular}
V.G.~Turaev, \emph{Modular categories and 3-manifold invariants},
  {\emph{International Journal of Modern Physics B} {\bfseries 6} (1992) 1807}.

\bibitem{Witten:1988hf}
E.~Witten, \emph{{Quantum Field Theory and the Jones Polynomial}},
  \href{https://doi.org/10.1007/BF01217730}{\emph{Commun. Math. Phys.}
  {\bfseries 121} (1989) 351}.

\bibitem{turaev1992state}
V.G.~Turaev and O.Y.~Viro, \emph{State sum invariants of 3-manifolds and
  quantum 6j-symbols}, {\emph{Topology} {\bfseries 31} (1992) 865}.

\bibitem{Witten:1988hc}
E.~Witten, \emph{{(2+1)-Dimensional Gravity as an Exactly Soluble System}},
  \href{https://doi.org/10.1016/0550-3213(88)90143-5}{\emph{Nucl. Phys. B}
  {\bfseries 311} (1988) 46}.

\bibitem{Witten:2010cx}
E.~Witten, \emph{{Analytic Continuation Of Chern-Simons Theory}}, {\emph{AMS/IP
  Stud. Adv. Math.} {\bfseries 50} (2011) 347}
  [\href{https://arxiv.org/abs/1001.2933}{{\ttfamily 1001.2933}}].

\bibitem{kashaev1997hyperbolic}
R.M.~Kashaev, \emph{The hyperbolic volume of knots from the quantum
  dilogarithm}, {\emph{Letters in mathematical physics} {\bfseries 39} (1997)
  269}.

\bibitem{murakami2001colored}
H.~Murakami and J.~Murakami, \emph{The colored {J}ones polynomials and the
  simplicial volume of a knot}, {\emph{Acta Mathematica} {\bfseries 186} (2001)
  85}.

\bibitem{murakami2002kashaev}
H.~Murakami, J.~Murakami, M.~Okamoto, T.~Takata and Y.~Yokota, \emph{Kashaev's
  conjecture and the {C}hern-{S}imons invariants of knots and links},
  {\emph{Experimental Mathematics} {\bfseries 11} (2002) 427}.

\bibitem{zagier2010quantum}
D.~Zagier, \emph{Quantum modular forms}, {\emph{Quanta of Maths} {\bfseries 11}
  (2010) 5}.

\bibitem{Garoufalidis:2021lcp}
S.~Garoufalidis and D.~Zagier, \emph{{Knots, Perturbative Series and Quantum
  Modularity}}, \href{https://doi.org/10.3842/SIGMA.2024.055}{\emph{SIGMA}
  {\bfseries 20} (2024) 055}
  [\href{https://arxiv.org/abs/2111.06645}{{\ttfamily 2111.06645}}].

\bibitem{Chen:2015wfa}
Q.~Chen and T.~Yang, \emph{{Volume conjectures for the
  Reshetikhin{\textendash}Turaev and the Turaev{\textendash}Viro invariants}},
  \href{https://doi.org/10.4171/qt/111}{\emph{Quantum Topol.} {\bfseries 9}
  (2018) 419} [\href{https://arxiv.org/abs/1503.02547}{{\ttfamily
  1503.02547}}].

\bibitem{Wheeler:2023cht}
C.~Wheeler, \emph{{Quantum Modularity for a Closed Hyperbolic 3-Manifold}},
  \href{https://doi.org/10.3842/SIGMA.2025.004}{\emph{SIGMA} {\bfseries 21}
  (2025) 004} [\href{https://arxiv.org/abs/2308.03265}{{\ttfamily
  2308.03265}}].

\bibitem{Gukov:2016gkn}
S.~Gukov, P.~Putrov and C.~Vafa, \emph{{Fivebranes and 3-manifold homology}},
  \href{https://doi.org/10.1007/JHEP07(2017)071}{\emph{JHEP} {\bfseries 07}
  (2017) 071} [\href{https://arxiv.org/abs/1602.05302}{{\ttfamily
  1602.05302}}].

\bibitem{Gukov:2017kmk}
S.~Gukov, D.~Pei, P.~Putrov and C.~Vafa, \emph{{BPS spectra and 3-manifold
  invariants}}, \href{https://doi.org/10.1142/S0218216520400039}{\emph{J. Knot
  Theor. Ramifications} {\bfseries 29} (2020) 2040003}
  [\href{https://arxiv.org/abs/1701.06567}{{\ttfamily 1701.06567}}].

\bibitem{LZ99}
D.~Zagier and R.~Lawrence, \emph{Modular forms and quantum invariants of
  3-manifolds}, {\emph{Asian Journal of Mathematics} {\bfseries 3} (1999) 93}.

\bibitem{Hikami05}
K.~Hikami, \emph{Quantum invariant, modular form, and lattice points},
  {\emph{International Mathematics Research Notices} {\bfseries 2005} (2005)
  121} [\href{https://arxiv.org/abs/arXiv:math-ph/0409016}{{\ttfamily
  arXiv:math-ph/0409016}}].

\bibitem{Hikami05Brieskorn}
K.~Hikami, \emph{On the quantum invariant for the {B}rieskorn homology
  spheres}, {\emph{International Journal of Mathematics} {\bfseries 16} (2005)
  661} [\href{https://arxiv.org/abs/arXiv:math-ph/0405028}{{\ttfamily
  arXiv:math-ph/0405028}}].

\bibitem{Hikami06}
K.~Hikami, \emph{Quantum invariants, modular forms, and lattice points {II}},
  {\emph{Journal of Mathematical Physics} {\bfseries 47} (2006) }
  [\href{https://arxiv.org/abs/arXiv:math/0604091}{{\ttfamily
  arXiv:math/0604091}}].

\bibitem{Hikami06Spherical}
K.~Hikami, \emph{On the quantum invariants for the spherical {S}eifert
  manifolds}, {\emph{Communications in Mathematical Physics} {\bfseries 268}
  (2006) 285} [\href{https://arxiv.org/abs/arXiv:math-ph/0504082}{{\ttfamily
  arXiv:math-ph/0504082}}].

\bibitem{Hikami11}
K.~Hikami, \emph{Decomposition of {W}itten-{R}eshetikhin-{T}uraev invariant:
  Linking pairing and modular forms}, {\emph{AMS/IP Stud. Adv. Math.}
  {\bfseries 50} (2011) 131}.

\bibitem{Gukov:2016njj}
S.~Gukov, M.~Marino and P.~Putrov, \emph{{Resurgence in complex Chern-Simons
  theory}},  \href{https://arxiv.org/abs/1605.07615}{{\ttfamily 1605.07615}}.

\bibitem{Cheng:2018vpl}
M.C.N.~Cheng, S.~Chun, F.~Ferrari, S.~Gukov and S.M.~Harrison, \emph{{3d
  Modularity}}, \href{https://doi.org/10.1007/JHEP10(2019)010}{\emph{JHEP}
  {\bfseries 10} (2019) 010}
  [\href{https://arxiv.org/abs/1809.10148}{{\ttfamily 1809.10148}}].

\bibitem{bringmann2020quantum}
K.~Bringmann, K.~Mahlburg and A.~Milas, \emph{Quantum modular forms and
  plumbing graphs of 3-manifolds}, {\emph{Journal of Combinatorial Theory,
  Series A} {\bfseries 170} (2020) 105145}.

\bibitem{Cheng:2023row}
M.C.N.~Cheng, I.~Coman, D.~Passaro and G.~Sgroi, \emph{{Quantum Modular
  $\widehat Z{}^G$-Invariants}},
  \href{https://doi.org/10.3842/SIGMA.2024.018}{\emph{SIGMA} {\bfseries 20}
  (2024) 018} [\href{https://arxiv.org/abs/2304.03934}{{\ttfamily
  2304.03934}}].

\bibitem{Cheng:2024vou}
M.C.N.~Cheng, I.~Coman, P.~Kucharski, D.~Passaro and G.~Sgroi, \emph{{3d
  Modularity Revisited}},  \href{https://arxiv.org/abs/2403.14920}{{\ttfamily
  2403.14920}}.

\bibitem{murakami1994quantum}
H.~Murakami, \emph{Quantum {SU}(2)-invariants dominate {C}asson's
  {SU}(2)-invariant},  in \emph{Mathematical Proceedings of the Cambridge
  Philosophical Society}, vol.~115, pp.~253--281, Cambridge University Press,
  1994.

\bibitem{lawrence2021witten}
R.~Lawrence, \emph{{W}itten-{R}eshetikhin-{T}uraev invariants of 3-manifolds as
  holomorphic functions},  in \emph{Geometry and Physics}, pp.~363--377, CRC
  Press (2021).

\bibitem{habiro2002quantum}
K.~Habiro, \emph{On the quantum sl2 invariants of knots and integral homology
  spheres}, {\emph{Geom. Topol. Monogr} {\bfseries 4} (2002) 55}.

\bibitem{Freed:1991wd}
D.S.~Freed and R.E.~Gompf, \emph{{Computer calculation of Witten's three
  manifold invariant}}, \href{https://doi.org/10.1007/BF02100006}{\emph{Commun.
  Math. Phys.} {\bfseries 141} (1991) 79}.

\bibitem{andersen2004asymptotic}
J.E.~Andersen, \emph{The asymptotic expansion conjecture},  in \emph{Invariants
  of knots and 3--manifolds (Kyoto 2001): Problems on invariants of knots and
  3-manifolds}, pp.~474--480, Mathematical Sciences Publishers (2004).

\bibitem{andersen2025proofwittensasymptoticexpansion}
J.E.~Andersen, L.~Han, Y.~Li, W.E.~Mistegård, D.~Sauzin and S.~Sun, \emph{{A
  proof of Witten's asymptotic expansion conjecture for WRT invariants of
  Seifert fibered homology spheres}},  2025.

\bibitem{Kontsevich}
M.~Kontsevich, ``{Resurgence from the path integral perspective (Perimeter
  Institute, 2012); Exponential integrals (SCGP and at IHES, 2014 and 2015);
  Resurgence and wall-crossing via complexified path integral (TFC Sendai,
  2016)}.''.

\bibitem{garoufalidis2007chern}
S.~Garoufalidis, \emph{{C}hern-{S}imons theory, analytic continuation and
  arithmetic}, {\emph{arXiv preprint arXiv:0711.1716} (2007) }.

\bibitem{Garoufalidis:2020nut}
S.~Garoufalidis, J.~Gu and M.~Marino, \emph{{The Resurgent Structure of Quantum
  Knot Invariants}},
  \href{https://doi.org/10.1007/s00220-021-04076-0}{\emph{Commun. Math. Phys.}
  {\bfseries 386} (2021) 469}
  [\href{https://arxiv.org/abs/2007.10190}{{\ttfamily 2007.10190}}].

\bibitem{andersen2022resurgence}
J.E.~Andersen and W.E.~Misteg{\aa}rd, \emph{Resurgence analysis of quantum
  invariants of {S}eifert fibered homology spheres}, {\emph{Journal of the
  London Mathematical Society} {\bfseries 105} (2022) 709}.

\bibitem{Gukov:2024vbr}
S.~Gukov and P.~Putrov, \emph{{On categorification of Stokes coefficients in
  Chern-Simons theory}},  \href{https://arxiv.org/abs/2403.12128}{{\ttfamily
  2403.12128}}.

\bibitem{scott1983geometries}
P.~Scott, \emph{{The Geometries of 3-Manifolds}}, {\emph{Bulletin of the London
  Mathematical Society} {\bfseries 15} (1983) 401}.

\bibitem{saveliev2002invariants}
N.~Saveliev, \emph{Invariants for homology 3-spheres}, vol.~140, Springer
  (2002).

\bibitem{brooks1984godbillon}
R.~Brooks and W.~Goldman, \emph{The {G}odbillon-{V}ey invariant of a
  transversely homogeneous foliation}, {\emph{Transactions of the American
  Mathematical Society} {\bfseries 286} (1984) 651}.

\bibitem{brooks1984volumes}
R.~Brooks and W.~Goldman, \emph{Volumes in {S}eifert space}, {\emph{Duke Math.
  J.} {\bfseries 51} (1984) 529}.

\bibitem{khoi2003cut}
V.T.~Khoi, \emph{A cut-and-paste method for computing the {S}eifert volumes},
  {\emph{Mathematische Annalen} {\bfseries 326} (2003) 759}.

\bibitem{KM91}
R.~Kirby and P.~Melvin, \emph{The 3-manifold invariants of {W}itten and
  {R}eshetikhin-{T}uraev for $sl(2, \mathbb{C})$}, {\emph{Inventiones
  Mathematicae} {\bfseries 105} (1991) 473}.

\bibitem{Murakami:2023oam}
Y.~Murakami, \emph{{A Proof of a Conjecture of
  Gukov{\textendash}Pei{\textendash}Putrov{\textendash}Vafa}},
  \href{https://doi.org/10.1007/s00220-024-05136-x}{\emph{Commun. Math. Phys.}
  {\bfseries 405} (2024) 274}
  [\href{https://arxiv.org/abs/2302.13526}{{\ttfamily 2302.13526}}].

\bibitem{LR99}
R.~Lawrence and L.~Rozansky, \emph{{W}itten--{R}eshetikhin--{T}uraev invariants
  of {S}eifert manifolds}, {\emph{Communications in Mathematical Physics}
  {\bfseries 205} (1999) 287}.

\bibitem{NPQ2025}
T.~Nicosanti, P.~Putrov and J.~Quenta-Raygada, \emph{{Approximating
  $\mathrm{SU}(2)$ Chern-Simons theory by finite group gauge theories}},
  \href{https://arxiv.org/abs/2512.11704}{{\ttfamily 2512.11704}}.

\bibitem{Murty17}
M.R.~Murty and S.~Pathak, \emph{Evaluation of the quadratic {G}auss sum},
  {\emph{The Mathematics Student} {\bfseries 86} (2017) 139}.

\bibitem{deloup2007reciprocity}
F.~Deloup and V.~Turaev, \emph{On reciprocity}, {\emph{Journal of Pure and
  Applied Algebra} {\bfseries 208} (2007) 153}.

\bibitem{milnor1973symmetric}
J.W.~Milnor, D.~Husemoller et~al., \emph{Symmetric bilinear forms}, vol.~73,
  Springer (1973).

\bibitem{harichurn2025delta}
S.~Harichurn, J.~Svoboda et~al., \emph{Delta invariants of plumbed manifolds},
  {\emph{SIGMA. Symmetry, Integrability and Geometry: Methods and Applications}
  {\bfseries 21} (2025) 091}.

\bibitem{Kucharski:2019fgh}
P.~Kucharski, \emph{{$\hat{Z}$ invariants at rational $\tau$}},
  \href{https://doi.org/10.1007/JHEP09(2019)092}{\emph{JHEP} {\bfseries 09}
  (2019) 092} [\href{https://arxiv.org/abs/1906.09768}{{\ttfamily
  1906.09768}}].

\bibitem{Chung:2019jgw}
H.-J.~Chung, \emph{{BPS Invariants for 3-Manifolds at Rational Level $K$}},
  \href{https://doi.org/10.1007/JHEP02(2021)083}{\emph{JHEP} {\bfseries 02}
  (2021) 083} [\href{https://arxiv.org/abs/1906.12344}{{\ttfamily
  1906.12344}}].

\bibitem{Costantino:2021yfd}
F.~Costantino, S.~Gukov and P.~Putrov, \emph{{Non-Semisimple TQFT's and BPS
  $q$-Series}}, \href{https://doi.org/10.3842/SIGMA.2023.010}{\emph{SIGMA}
  {\bfseries 19} (2023) 010}
  [\href{https://arxiv.org/abs/2107.14238}{{\ttfamily 2107.14238}}].

\end{thebibliography}\endgroup

\end{document}